\documentclass[11pt]{article}
\usepackage{xcolor}
\usepackage[margin=1in]{geometry}
\usepackage{authblk}

\usepackage{amsmath,amssymb,amsthm,mathtools}
\usepackage{booktabs,longtable,array}
\usepackage{enumitem}
\usepackage[expansion=false]{microtype}
\usepackage{graphicx}
\usepackage[hidelinks]{hyperref}
\usepackage{tikz}
\usepackage{adjustbox}
\usepackage{capt-of}
\usetikzlibrary{arrows.meta,calc}
\usepackage{xcolor}
\definecolor{lightgreen}{RGB}{80,170,100}
\definecolor{facialdart}{HTML}{2F5364}
\definecolor{reversedart}{HTML}{8F9DA4}
\definecolor{figureink}{HTML}{202427}
\definecolor{revisiongreen}{HTML}{4C956C}
\definecolor{revisionyellow}{HTML}{B8860B}
\definecolor{revisionviolet}{HTML}{8E6CB0}
\definecolor{revisionrose}{HTML}{C25B7C}
\definecolor{revisionpink}{HTML}{D12C84}
\definecolor{revisionappendix}{HTML}{007C78}
\definecolor{revisionorange}{HTML}{B24A00}
\definecolor{revisionblue}{HTML}{4F81BD}
\definecolor{revisionnew}{HTML}{D12C84}
\definecolor{revisiondirections}{HTML}{6B3FA0}

\newtheorem{theorem}{Theorem}[section]
\newtheorem{lemma}[theorem]{Lemma}
\newtheorem{proposition}[theorem]{Proposition}
\newtheorem{corollary}[theorem]{Corollary}
\theoremstyle{definition}
\newtheorem{definition}[theorem]{Definition}
\newtheorem{construction}[theorem]{Construction}

\theoremstyle{remark}

\newtheorem{conjecture}[theorem]{Conjecture}
\newcommand{\F}{\mathbb F}
\newcommand{\Z}{\mathbb Z}
\newcommand{\G}{\Gamma}

\newcommand{\X}{X}

\newcommand{\sgn}{\operatorname{sgn}}
\newcommand{\Sym}{\operatorname{Sym}}

\usepackage{xcolor}

\title{The Genus of Bipartite Kneser Graphs}

\author[1]{Alexander Metzger}
\author[1]{Austin Ulrigg}
\affil[1]{University of Washington\\
\texttt{metzgera@uw.edu}\qquad\texttt{austinul@uw.edu}}

\date{August 24th, 2026}

\makeatletter
\newcommand{\displayfontsmall}{%
\let\@currsize\small\fontsize{10}{12}\selectfont
   \abovedisplayskip 10\p@ \@plus2\p@ \@minus5\p@
   \abovedisplayshortskip \z@ \@plus3\p@
   \belowdisplayshortskip 6\p@ \@plus3\p@ \@minus3\p@
   \def\@listi{\leftmargin\leftmargini
               \topsep 6\p@ \@plus2\p@ \@minus2\p@
               \parsep 3\p@ \@plus2\p@ \@minus\p@
               \itemsep \parsep}%
   \belowdisplayskip \abovedisplayskip
}
\newcommand{\displayfontfootnotesize}{%
\let\@currsize\footnotesize\fontsize{9}{11}\selectfont%
   \abovedisplayskip 8\p@ \@plus2\p@ \@minus4\p@
   \abovedisplayshortskip \z@ \@plus\p@
   \belowdisplayshortskip 4\p@ \@plus2\p@ \@minus2\p@
   \def\@listi{\leftmargin\leftmargini
               \topsep 4\p@ \@plus2\p@ \@minus2\p@
               \parsep 2\p@ \@plus\p@ \@minus\p@
               \itemsep \parsep}%
   \belowdisplayskip \abovedisplayskip
}
\makeatother

\begin{document}
\maketitle

\begin{abstract}
We determine the orientable genus of an infinite family of bipartite Kneser graphs. The graph $H(h,2)$ has two copies of the two-element subsets of $[h]$, with opposite-class vertices adjacent when the corresponding subsets are disjoint. For every prime $h>3$ with $h\equiv3\pmod8$, we prove
$$
\gamma(H(h,2))=1-\frac{h(h-1)}{2}
+\frac{h(h-1)(h-2)(h-3)}{16}.
$$
Euler's formula gives this lower bound, with equality for a quadrangulation. We construct a vertex-transitive orientable quadrangulation using an odd-order affine group that acts simply transitively on the two-element subsets. This gives an infinite family satisfying Pisanski's conjecture on quadrilateral embeddings of regular bipartite graphs.
\end{abstract}

\section{Introduction}\label{sec:introduction}
In 1955, German mathematician Martin Kneser published an exercise in the
\textit{Jahresbericht der Deutschen Mathematiker-Vereinigung} \cite{Kneser1955}.
The exercise asked the following question.

\begin{center}
\begin{minipage}{0.74\textwidth}
\centering
\itshape
Given all the $k$-subsets of a set with $n$ elements, what is the least
number of collections needed to sort them into so that no collection contains two
disjoint subsets?
\end{minipage}
\end{center}

\begin{figure}[htbp]
\centering
\begin{adjustbox}{max width=.9\linewidth}
\begin{tikzpicture}[font=\small, line width=.5pt]

  % --- book drawing ---
  \tikzset{
    pics/knbook/.style args={#1/#2/#3}{code={
      \draw[draw=black!50, fill=#3, rounded corners=1.2pt]
        (-.25,0) rectangle (.25,1.65);
      \draw[black!18] (-.18,.09) -- (-.18,1.56);
      \draw[black!20] (-.16,.17) -- (.18,.17);
      \draw[black!20] (-.16,1.47) -- (.18,1.47);

      \node[circle, draw=black!35, fill=white, inner sep=0pt,
            minimum size=3.9mm, font=\footnotesize]
        at (.02,1.12) {$#1$};

      \node[circle, draw=black!35, fill=white, inner sep=0pt,
            minimum size=3.9mm, font=\footnotesize]
        at (.02,.62) {$#2$};
    }}
  }

  % --- shelves and books ---
  \foreach \books/\x/\labeltext in {
    {-.93/1/2/blue!14,  -.31/1/3/blue!14,   .31/1/4/blue!14,   .93/1/5/blue!14}/0/{$\mathcal F_1$},
    {-.62/2/3/teal!14,   0/2/4/teal!14,     .62/2/5/teal!14}/3.6/{$\mathcal F_2$},
    {-.62/3/4/orange!16, 0/3/5/orange!16,   .62/4/5/orange!16}/7.2/{$\mathcal F_3$}
  }{
    \begin{scope}[shift={(\x,0)}]
      \foreach \pos/\a/\b/\c in \books
        \pic at (\pos,0) {knbook=\a/\b/\c};

      % shelf
      \draw[brown!55!black, line width=1.0pt] (-1.4,0) -- (1.4,0);
      \draw[brown!30!black, line width=.6pt] (-1.4,-.09) -- (1.4,-.09);

      % shelf label
      \node[text=black!75] at (0,-.43) {\labeltext};
    \end{scope}
  }

\end{tikzpicture}
\end{adjustbox}
\caption{Each book represents a $2$-subset of
$\{1,2,3,4,5\}$, representing its two authors. Any two books on the
same shelf always share an author, so no shelf contains two disjoint subsets.}
\label{fig:kneser-bookshelves}
\end{figure}

For positive integers $n \ge 2k \ge 2$, the Kneser graph $K(n,k)$ has the
$k$-subsets of $[n]$ as its vertices, with two vertices adjacent when the
corresponding sets are disjoint. Kneser's conjecture was that
$$
\chi(K(n,k)) = n - 2k + 2.
$$
Kneser's conjecture was proven 23 years later in a groundbreaking paper by L\'aszl\'o Lov\'asz \cite{Lovasz1978}. His proof used the Borsuk--Ulam theorem, a theorem about continuous maps on spheres \cite{Borsuk1933}. De Longueville describes Lov\'asz's proof as the beginning of \textit{topological combinatorics} \cite{DeLongueville2004}.

We focus on a different topological property of the bipartite Kneser graphs, that is, determining their minimum orientable genus. The graph $H(n,k)$ has two copies of the $k$-subsets of $[n]$ as its bipartition classes, with vertices in opposite classes adjacent when the corresponding subsets are disjoint. Equivalently,
$$
H(n,k)\cong K(n,k)\times K_2,
$$
so $H(n,k)$ is the bipartite double cover, also called the Kronecker cover, of $K(n,k)$ \cite{Matsushita2021}. For $k=1$, a vertex $i$ in
one bipartition class is adjacent to a vertex $j$ in the other exactly when
$i\ne j$. Thus $H(n,1)\cong K_{n,n}-M$, where $M$ is a perfect
matching. Pisanski, Parsons and Mohar determined the orientable genus of these graphs as
$$
\gamma(H(n,1))
=\left\lceil\frac{(n-1)(n-4)}{4}\right\rceil
$$
for $n\ge3$ \cite{MoharParsonsPisanski1985}. For $k=2$ and $n\ge5$,
each vertex has $2n-3$ non-neighbors in the opposite class, so the
nearly complete bipartite result no longer applies. We determine the
genus of $H(h,2)$ for an infinite family by constructing minimum-genus
embeddings that are also vertex-transitive.

Previous work has studied several other basic properties of both the Kneser graphs and the bipartite Kneser graphs. Before the Kneser graphs were given their name, in 1961, Erd\H{o}s, Ko, and Rado famously determined the independence number of $K(n,k)$ for $n\geq 2k$ \cite{ErdosKoRado1961}. The bipartite Kneser graphs were independently conjectured to be Hamiltonian by Simpson and Roth, and were proven for $n\geq 2k+1$ by M\"utze and Su in 2017 \cite{Simpson1991,Gould1991,MutzeSu2017}. Later, Mirafzal determined the automorphism group of $H(n,k)$ for $n>2k$ \cite{Mirafzal2019}. More recently, Merino, M\"utze, and Namrata proved that every Kneser graph with $n\geq 2k+1$ is Hamiltonian, with the exception of the Petersen graph \cite{MerinoMutzeNamrata2025}.

On the other hand, the problem of determining a graph's orientable genus is a longstanding and difficult problem. For instance, it is known that deciding whether a graph has orientable genus at most a given integer is NP-complete \cite{Thomassen1989}. Many of the  currently known genus formulas are for highly symmetric graph families. In 1968, Ringel and Youngs resolved Heawood's map coloring conjecture \cite{Heawood1890} by determining the orientable genus of $K_n$ \cite{RingelYoungs1968}. In 1955, Ringel determined the genus of the $n$-dimensional cube graphs \cite{Ringel1955}, and in 1965 the genus of the complete bipartite graphs \cite{Ringel1965}.

Many genus constructions exploit symmetry; current graphs, for example, generate local rotations by translation. We ask when a vertex-transitive graph has a vertex-transitive minimum-genus embedding. For every prime $h>3$ with $h\equiv3\pmod8$, we construct a vertex-transitive orientable quadrangulation of $H(h,2)$. It attains the Euler lower bound and therefore determines the genus.

\begin{theorem}[Main theorem]\label{thm:maintheorem}
Let $h>3$ be a prime with $h\equiv 3\mod 8$. Then $H(h,2)$ admits a vertex-transitive cellular orientable quadrangulation, and
$$
\gamma(H(h,2))
=1-\frac{h(h-1)}{2}
+\frac{h(h-1)(h-2)(h-3)}{16}
=1+\frac{1}{4}\binom{h}{2}
\left(\binom{h-2}{2}-4\right).
$$
\end{theorem}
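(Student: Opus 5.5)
The proof has two halves: a lower bound from Euler's formula, and a matching upper bound from an explicit quadrangulation. The lower bound is routine. $H(h,2)$ has $V=2\binom h2=h(h-1)$ vertices. Each vertex has degree $\binom{h-2}{2}$, so
\[
E=\binom h2\binom{h-2}{2}=\frac{h(h-1)(h-2)(h-3)}{4}.
\]
The graph is bipartite, so every face of a cellular embedding has length at least $4$, giving $F\le E/2$. Euler's formula $V-E+F=2-2g$ for a minimum-genus (hence cellular) embedding then gives
\[
g\ge 1-\frac{V}{2}+\frac{E}{4}=1-\frac{h(h-1)}{2}+\frac{h(h-1)(h-2)(h-3)}{16},
\]
with equality exactly for a quadrangulation. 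The right-hand side is an integer when $h\equiv3\pmod 8$, which I would check directly. The second closed form in the theorem is just algebra.

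For the upper bound I would construct the quadrangulation as a rotation system that is invariant under a group. Since $h\equiv 3\pmod 4$, $-1$ is a nonsquare mod $h$, so the group $Q$ of nonzero squares has odd order $(h-1)/2$. The affine group $G=\{x\mapsto ax+b: a\in Q, b\in\F_h\}$ then has order $h(h-1)/2=\binom h2$. I would check that $G$ acts simply transitively on $2$-subsets of $\F_h$. Any element stabilising $\{x,y\}$ either fixes both points, so it is the identity, or swaps them, which forces $a=-1\notin Q$. Counting then gives transitivity.

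Next, identify each class of vertices with $G$ via the base pair $\{0,1\}$. The edges then become a Cayley-like structure: a vertex $gS_0$ in one class is joined to $gtS_0$ in the other for every $t$ in the set $T=\{t\in G: tS_0\cap S_0=\emptyset\}$, with $|T|=\binom{h-2}2$. The embedding is specified by one cyclic order on $T$ at the base vertex of each class, translated by $G$. To make the embedding vertex-transitive across the two classes as well, I would add an automorphism swapping the classes. Since $H(n,k)\cong K(n,k)\times K_2$, the class swap is itself an automorphism, and it should commute with $G$.

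Face tracing under the rotation reduces to a combinatorial condition on the cyclic order $\rho$ of $T$. If one class uses $\rho$ and the other uses $\rho^{-1}$ (for orientability), each face corresponds to a product of the form $t_1 t_2^{-1} t_3 t_4^{-1}$ with consecutive rotation successors. The requirement is that every such word closes after four steps, equal to the identity. The design work is to pick $\rho$ so this holds. My plan is to organise $T$ into orbits under conjugation or multiplication by a suitable subgroup, for instance the cyclic group generated by a primitive element of $Q$, and to order $T$ so that the successor map is $t\mapsto c\,t\,c'$ for fixed group elements. The four-cycle condition then collapses to a single identity in $G$. The condition $h\equiv 3\pmod 8$, meaning $2$ is a nonsquare and $-1$ is a nonsquare, is presumably what makes certain pairs like $\{0,1\}\mapsto\{0,2\}$ or $\{0,-1\}$ land in the right cosets so that the orbits interleave correctly.

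The main obstacle is this step: finding an explicit cyclic order on the $\binom{h-2}2$ elements of $T$ that is a single cycle and for which every face is a quadrilateral. After that, the remaining checks are routine. The embedding is cellular, since it is given by a rotation system. It is orientable, since it is a pure rotation system with no twists. Counting faces gives $F=E/2$, and with the lower bound this proves the formula.
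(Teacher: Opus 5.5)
\emph{Verdict: there is a genuine gap.} The construction of $\rho$ is missing, and the rule you sketch for it provably cannot work.

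\textbf{What matches the paper.} Your lower bound, the simply transitive square-multiplier affine group, the Haar-graph labeling, and $\rho$ at one class with $\rho^{-1}$ at the other are the paper's set-up in Sections~2--3. One correction: orientability is not the reason for $\rho^{-1}$, since every rotation system is orientable. The reason is that $\rho^{-1}$ makes the class swap an orientation-reversing automorphism of the embedding, which is what gives vertex-transitivity across the two classes.

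\textbf{The gap.} It is the step you flag yourself: you never construct $\rho$, and that construction is essentially the whole theorem. Trace a face from a label $t_0$, defining $t_1,t_2,t_3$ by $\rho(t_1)=t_0^{-1}$, $\rho(t_1^{-1})=t_2$ and $\rho(t_3)=t_2^{-1}$. The face closes after four steps iff both $t_0t_1t_2t_3=1$ and $\rho(t_3^{-1})=t_0$ hold.

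\textbf{Why a single rule $\rho(t)=c\,t\,c'$ fails.}
\begin{enumerate}[label=(\roman*)]
\item Cycle lengths: $\rho^n(t)=c^n t\,c'^n$, so the order of $\rho$ divides $\operatorname{lcm}(\operatorname{ord}c,\operatorname{ord}c')$. This is odd because $|G|$ is odd, so every cycle of $\rho$ has odd length. But $|T|=\frac{(h-2)(h-3)}{2}$ is divisible by $4$ when $h\equiv 3\pmod 8$, so $\rho$ is never a single $|T|$-cycle.
\item Face closure: a direct computation gives $\rho(t_3^{-1})=(cc')^2\,t_0\,(cc')^2$. Closure then makes $(cc')^2$ conjugate to its own inverse, hence trivial in an odd-order group. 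So $\rho$ is conjugation by $c$, the product condition forces $c$ to centralize $T$, and $\rho$ is the identity.
\end{enumerate}
If instead you intend different rules on different orbits, then gluing the orbits into one cycle while keeping every face a quadrilateral is exactly the unsolved problem.

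\textbf{The missing idea: separate the local face condition from the global one-cycle condition.}
\begin{enumerate}[label=(\alph*)]
\item The paper packages the four assignments $\rho(t_1)=t_0^{-1}$, $\rho(t_1^{-1})=t_2$, $\rho(t_3^{-1})=t_0$, $\rho(t_3)=t_2^{-1}$ as a rotation quadruple. It is defined on two inverse pairs and constrained by the single relation $t_0t_1t_2t_3=1$.
\item It proves that any bijection assembled from quadruples whose domains and ranges each partition $T$ makes every face a quadrilateral, whatever its cycle type.
\item It builds such a family explicitly:
  \begin{itemize}
  \item coordinates $X_k(w)$ in which inversion becomes $(k,w)\mapsto(-k,-w)$;
  \item chains along each coordinate fiber, via $X_k(w)X_m(-w)X_{-k}(w)=X_m(w)$;
  \item $K$-to-$N$ transitions, using $u^K=-1/2$, that link different fibers;
  \item abelian-subgroup quadruples for the translations and the zero-coordinate elements;
  \item replacement quadruples, using $u^E=4$ and $u^a+u^b=-2$, that splice those otherwise closed pieces into the rest.
  \end{itemize}
\item Only then does it address the one-cycle condition, using the leftover freedom of sign switches. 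Induced permutations reduce it to a permutation on $54$ or $62$ fixed labels. This is checked exhaustively by computer over all $2^{12}$ or $2^{14}$ character-sign patterns, and the seven primes below $131$ are certified separately.
\end{enumerate}
Without a construction of this kind, your argument establishes only the Euler lower bound.
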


The construction was suggested by PAGE computations \cite{MetzgerUlrigg2026}. PAGE computed genera or genus bounds for $H(n,2)$ through $n=20$, and the code and data are available in the \href{https://github.com/SanderGi/Genus}{PAGE repository}. Patterns in the computed rotation systems suggested the construction below for which the code and certificates are available in the
\href{https://github.com/SanderGi/Genus/tree/main/BipartiteKneser}{\texttt{BipartiteKneser} folder}
of the same repository.

Quadrilateral embeddings of regular bipartite graphs are well studied. Pisanski conjectured that whenever $m\geq d\geq4$ and $4\mid md$, some connected $d$-regular bipartite graph with $m$ vertices in each class has an orientable quadrangulation \cite{Pisanski1980}. Alpert and Gross \cite{AlpertGross1976}, Anderson \cite{Anderson1981}, and Abu-Sbeih and Parsons \cite{AbuSbeihParsons1983} proved partial cases. Theorem~\ref{thm:maintheorem} gives an infinite family with $m=h(h-1)/2$ and $d=(h-2)(h-3)/2$.

\begin{figure}[htbp]
\centering
\includegraphics[width=0.3\textwidth]{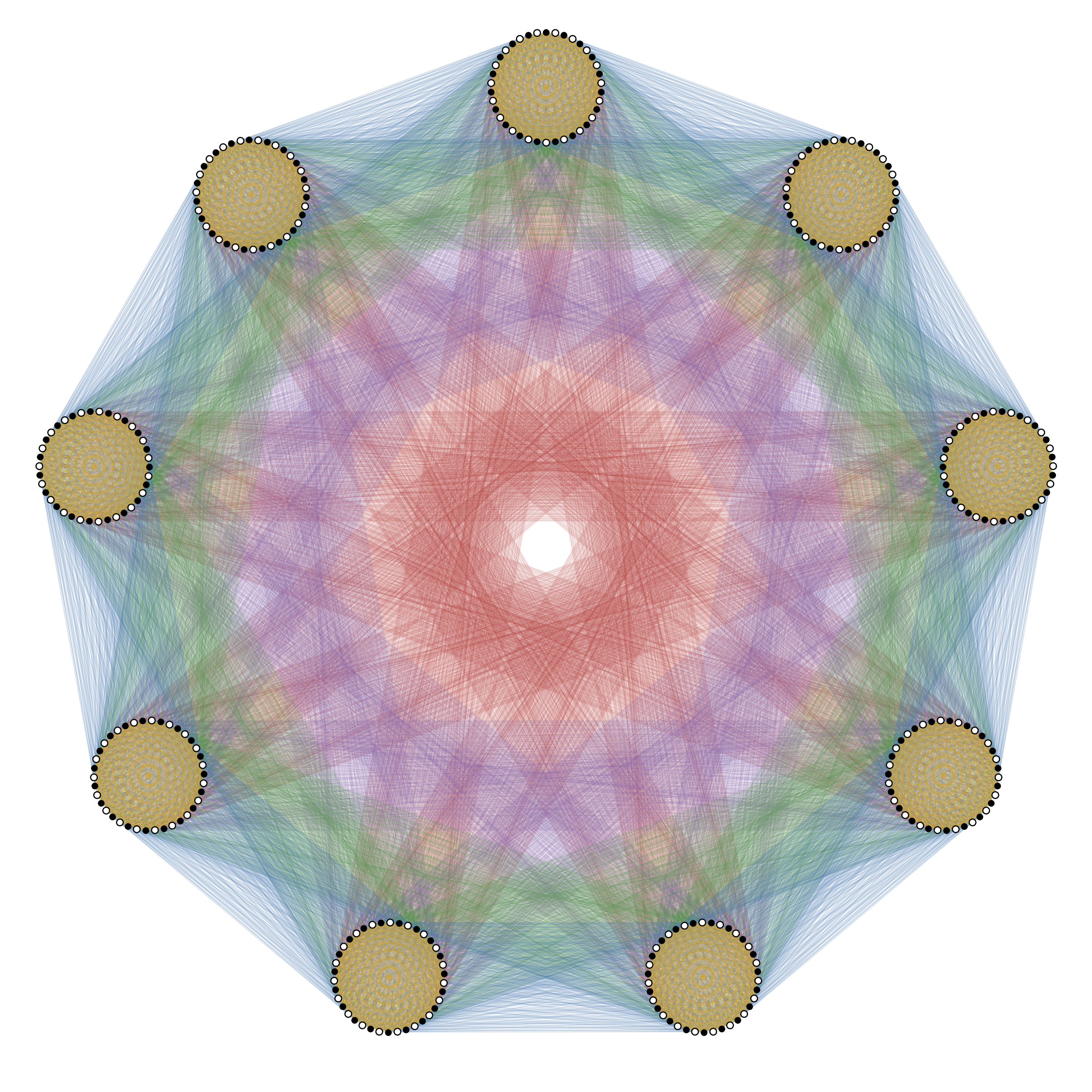}
\caption{The bipartite Kneser graph $H(19,2)$, whose genus is $5644$ by
Theorem~\ref{thm:maintheorem}. Black and white distinguish the bipartition
classes, vertices are grouped by the cyclic separation of the elements in
their corresponding $2$-subsets of $\mathbb Z_{19}$, and the edge colors denote
the distance of separation.}
\label{fig:H19}
\end{figure}

We now describe the construction. The group $\Gamma$ of affine maps $z\mapsto az+c$ of $\mathbb{F}_h$ with $a$ a nonzero square acts simply transitively on the two-element subsets of $\mathbb{F}_h$ when $h\equiv 3\mod 4$. Within this residue class, the Euler lower bound on the genus is only an integer when $h\equiv 3\mod 8$, so $H(h,2)$ cannot have an orientable quadrangulation when $h\equiv 7\mod 8$. Labeling the vertices of each bipartition class by the elements of $\Gamma$, $H(h,2)$ is a Haar graph of $\Gamma$, in the terminology of \cite{EstelyiPisanski2016}. The vertex $A_g$ is adjacent to $B_{g'}$ exactly when $g^{-1}g'$ is an element of a particular fixed set $S\subset\Gamma$.

A rotation system that is invariant under $\Gamma$ is then specified by two cyclic orders on $S$, one for each bipartition class. Our construction prescribes these orders from a permutation $\rho$ of $S$ and the conjugate of $\rho^{-1}$ by the inversion map of $S$. When $\rho$ is a single cycle, these give an orientable embedding in which exchanging the two bipartition classes is a graph automorphism that extends to an orientation-reversing homeomorphism of the supporting surface, and the group $\Gamma\times\mathbb{Z}_2$ acts simply transitively on its vertices.

To ensure the faces of this embedding are quadrilaterals, $S$ is partitioned into four-element sets of the form $\{s,s^{-1},t,t^{-1}\}$ whose images under $\rho$ satisfy one relation in $\Gamma$. We prescribe a map from each such set into $S$, with the
ranges also partitioning $S$, so the assignments together
yield a permutation of $S$. We then establish that this permutation $\rho$ is a single cycle, thus forming a valid rotation system and hence embedding.

Sections 2--3 set up the invariant rotation system and admissible quadruples. Sections 4--5 build a complete admissible family for $h\geq131$. Section 6 reduces the one-cycle condition to a permutation on $54$ or $62$ elements, and Section 7 solves this finite problem. Section 8 handles the smaller primes and proves Theorem~\ref{thm:maintheorem}. Section 9 gives further directions.

\section{Definitions and preliminaries}
\label{sec:defs}
Throughout, $h>3$ is prime with $h\equiv3\pmod8$. Let $\chi$ be the quadratic character of $\F_h$. For a graph $G$, let $\gamma(G)$ denote its orientable genus; for a finite set $Y$, let $\Sym(Y)$ denote its symmetric group. Set $n=h(h-1)/2$ and $d=(h-2)(h-3)/2$. Then $H(h,2)$ is connected, simple, bipartite, and $d$-regular, with 
$$|V(H(h,2))|=2n=h(h-1), \text{ and } |E(H(h,2))|=nd=\frac{h(h-1)(h-2)(h-3)}{4}.$$
For these basic graph properties, see \cite{Jin2020,Mirafzal2019}. An orientable quadrangulation is a cellular embedding in an orientable surface in which every facial walk has length four. It is known that any minimum-genus embedding of a connected graph is cellular \cite{Youngs1963}. Because bipartite graphs have no odd faces, Euler's formula gives the lower bound 
$$\gamma(H(h,2))\geq g_0:=1-\frac{h(h-1)}{2}+\frac{h(h-1)(h-2)(h-3)}{16}=1+\frac{1}{4}\binom{h}{2}\left(\binom{h-2}{2}-4\right).$$
Our construction will use the same local rotation at every vertex in each bipartition class. To define the two local rotations that will be used, we first identify the two-element subsets of $\F_h$ with the elements of an affine group. Let $q=(h-1)/2, \Z_q=\Z/q\Z$, and fix a generator $u$ of the nonzero squares of $\F_h$. Let 
$$\Gamma =\{(c,k)\mid  c\in \F_h, k\in \Z_q\},$$
where $(c,k)$ denotes the affine map $z\mapsto u^kz+c$. The group $\Gamma$ has odd order $hq$. It has been shown by Kantor that the affine maps with square multipliers act simply transitively on the unordered pairs of a finite field of order congruent to 3 modulo 4 \cite{Kantor1969}. Thus, the elements of $\Gamma$ give unique labels for the vertices in each bipartition class. Let $P=\{0,1\}\subseteq\F_h$.
\begin{definition}[Connection set]\label{def:connectionset}
Define
$$S=\{(c,k)\in \Gamma \mid c\notin \{0,1, -u^k, 1-u^k\}\}=\{s\in \Gamma \mid P\cap sP=\emptyset\}.$$
\end{definition}
Label the two bipartition classes by $A_g=A_{gP}$ and $B_g=B_{gP}$, where $g\in \Gamma$. Then vertex adjacency becomes
\begin{equation}\label{eq:connectionset}
A_g \sim B_{g'} \iff g^{-1}g'\in S. 
\end{equation}

\begin{proposition}\label{prop:connectionset}
The connection set satisfies
$$|S|=(h-3)+(q-1)(h-4)=\frac{(h-2)(h-3)}{2}=d,$$
and $1_\Gamma\notin S, S^{-1}=S$. Additionally, the inversion map $\iota(s)=s^{-1}$ is fixed-point-free on $S$.
\end{proposition}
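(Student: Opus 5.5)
We prove the four claims separately, working directly from Definition~\ref{def:connectionset}. Throughout we use that $h\equiv3\pmod4$, so $-1$ is a nonsquare in $\F_h$ and hence $u^k\neq-1$ for every $k$; the case $h\equiv 3\pmod 8$ is not needed here.

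\emph{Counting $|S|$.} We count, for each $k\in\Z_q$, the values $c\in\F_h$ that avoid the excluded set $E_k=\{0,1,-u^k,1-u^k\}$. For $k=0$ we have $u^0=1$, so $E_0=\{0,1,-1\}$. These three elements are distinct because $h>3$ is odd, leaving $h-3$ choices of $c$. For $k\neq0$ we have $u^k\neq1$, and we check that the four elements of $E_k$ are distinct:
\begin{itemize}
\item $0\neq1$ trivially;
\item $-u^k\neq0$ and $1-u^k\neq1$ since $u^k\neq0$;
\item $1-u^k\neq0$ since $u^k\neq1$;
\item $-u^k\neq1$ since $u^k\neq-1$;
\item $-u^k\neq 1-u^k$ trivially.
\end{itemize}
This leaves $h-4$ choices of $c$. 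Summing over $k$ gives $|S|=(h-3)+(q-1)(h-4)$. Substituting $q=(h-1)/2$ and expanding shows this equals $(h-2)(h-3)/2=d$.

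\emph{Identity and inverses.} The identity is $1_\Gamma=(0,0)$, and $c=0$ is excluded, so $1_\Gamma\notin S$. Equivalently, $1_\Gamma P=P$ meets $P$. For $S^{-1}=S$ we use the set-theoretic description $S=\{s\mid P\cap sP=\emptyset\}$, which the definition asserts agrees with the coordinate description. Indeed, $sP=\{c,u^k+c\}$ misses $\{0,1\}$ exactly when $c\notin\{0,1\}$ and $c\notin\{-u^k,1-u^k\}$. Since $s$ acts as a bijection on $\F_h$, applying $s^{-1}$ shows that $P\cap sP=\emptyset$ holds if and only if $s^{-1}P\cap P=\emptyset$. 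Hence $S$ is closed under inversion.

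\emph{Fixed-point-freeness of $\iota$.} A fixed point would be an element $s\in S$ with $s^2=1_\Gamma$. But $\Gamma$ has odd order $hq$, so it contains no element of order $2$. Therefore $s^2=1_\Gamma$ forces $s=1_\Gamma$, which does not lie in $S$. Alternatively, one can compute directly: $(c,k)^2=(u^kc+c,2k)$, and setting this equal to $(0,0)$ forces $2k=0$ in $\Z_q$. Since $q$ is odd, this gives $k=0$ and then $2c=0$, so $c=0$.

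The only step requiring care is the distinctness of the four excluded values in the counting argument. It hinges on $-1$ being a nonsquare, which rules out $u^k=-1$; the remaining claims are immediate once the parity of $|\Gamma|$ is noted.
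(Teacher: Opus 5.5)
Your proof is correct and takes the same approach as the paper. You count the excluded values of $c$ for each exponent using $u^k\neq 1$ and $u^k\neq -1$ (the latter because $-1$ is a nonsquare), obtain $1_\Gamma\notin S$ and $S^{-1}=S$ from the description $P\cap sP=\emptyset$, and rule out fixed points of $\iota$ because the odd-order group $\Gamma$ has no involutions. Your pairwise distinctness checks, the check that the two descriptions of $S$ agree, and the direct computation of $(c,k)^2$ fill in details the paper leaves implicit.
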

\begin{proof}
When $k=0$, there are $h-3$ such translation maps in $S$. For $k\neq 0$, $u^k\neq 1$ because $u$ has order $q$, while $u^k\neq -1$ because $u^k$ is a square and $-1$ is a non-square in $\F_h$. Thus the excluded values of $c$ are distinct, so there are $(h-4)$ such elements for each $k$. It is clear $1_\Gamma \notin S$ and as $sP\cap P=\emptyset$ if and only if $P\cap s^{-1}P=\emptyset$ we see $S=S^{-1}$. If $s=s^{-1}$ there would be a non-identity involution in the odd order group $\Gamma$.
\end{proof}
For $s\in S$, we call $[s]=\{s,s^{-1}\}$ the \textit{inverse pair} of $s$. Since $S=S^{-1}$ and the inversion map is fixed-point-free on $S$, these inverse pairs partition $S$.

\section{The \texorpdfstring{$\Gamma$}{Gamma} invariant rotation system}
\label{sec:invariant-rotations}

By Equation~\ref{eq:connectionset} every neighbor of $A_g$ is uniquely of the form $B_{gs}$ for some $s\in S$, and every neighbor of $B_g$ is uniquely of the form $A_{gs^{-1}}$. Thus, the connection set gives the same set of dart labels at every vertex. We now choose one cyclic order for the dart labels of each bipartition class. At $A_g$, denote the dart $A_g\to B_{gs}$ by $(g,s)_A$. At $B_g$, denote the dart $B_g\to A_{gs^{-1}}$ by $(g,s)_B$. We could instead denote the dart $B_g\to A_{gs}$ by $(g,s)_B$, but then the two darts of the same edge would have inverse labels. Our rotation system is specified by two $d$-cycles $\rho_A,\rho_B\in \Sym(S)$. We denote dart reversal by $\tau$ and vertex rotation by $R$. Then
\begin{align}
\tau(g,s)_A=(gs,s)_B, \quad R(g,s)_A=(g,\rho_A(s))_A, 
\tag{F1}\label{eq:facialpermutation1}
\\
\tau(g,s)_B=(gs^{-1},s)_A, \quad R(g,s)_B=(g, \rho_B(s))_B.
\tag{F2}\label{eq:facialpermutation2}
\end{align}

The facial permutation is $F=R\tau$. The following figure illustrates the four group steps of a quadrilateral facial walk.

\begin{figure}[ht]
\centering
\scalebox{0.65}{%
\begin{tikzpicture}[
    font=\small,
    walk/.style args={#1}{
        draw=#1, line width=.85pt,
        -{Stealth[length=5.4pt,width=4pt]},
        shorten <=4.6pt, shorten >=4.6pt,
        preaction={draw=#1!13,line width=5pt,-}
    },
    group step/.style args={#1}{
        circle,fill=#1!10,draw=#1!25,line width=.4pt,
        minimum size=18pt,inner sep=2pt,
        text=#1,font=\small\boldmath
    },
    vertex/.style={circle,draw=facialdart,line width=.8pt,
        minimum size=6pt,inner sep=0pt},
    vertex text/.style={inner sep=0pt},
    dart/.style={inner sep=0pt}
]
\colorlet{stepone}{facialdart}
\colorlet{steptwo}{teal!65!black}
\colorlet{stepthree}{orange!65!black}
\colorlet{stepfour}{violet!65!black}

\coordinate (v0) at ( 135:2.1);
\coordinate (v1) at (  45:2.1);
\coordinate (v2) at ( -45:2.1);
\coordinate (v3) at (-135:2.1);

\draw[walk=stepone] (v0)
    arc[start angle=135,end angle=45,radius=2.1];
\draw[walk=steptwo] (v1)
    arc[start angle=45,end angle=-45,radius=2.1];
\draw[walk=stepthree] (v2)
    arc[start angle=-45,end angle=-135,radius=2.1];
\draw[walk=stepfour] (v3)
    arc[start angle=225,end angle=135,radius=2.1];

\node[group step=stepone]   at (0, 1.55) {$s$};
\node[group step=steptwo]   at (1.55, 0) {$s_1$};
\node[group step=stepthree] at (0,-1.55) {$s_2$};
\node[group step=stepfour]  at (-1.55,0) {$s_3$};

\node[dart] at (0, 2.65) {$(g,s)_A$};
\node[dart,anchor=west] at (2.65,0) {$(gs,s_1^{-1})_B$};
\node[dart] at (0,-2.65) {$(gss_1,s_2)_A$};
\node[dart,anchor=east] at (-2.65,0) {$(gss_1s_2,s_3^{-1})_B$};

\node[vertex,fill=facialdart] at (v0) {};
\node[vertex,fill=white] at (v1) {};
\node[vertex,fill=facialdart] at (v2) {};
\node[vertex,fill=white] at (v3) {};

\node[vertex text,anchor=south east]
    at ([xshift=-6pt,yshift=6pt]v0) {$A_g$};
\node[vertex text,anchor=south west]
    at ([xshift=6pt,yshift=6pt]v1) {$B_{gs}$};
\node[vertex text,anchor=north west]
    at ([xshift=6pt,yshift=-6pt]v2) {$A_{gss_1}$};
\node[vertex text,anchor=north east]
    at ([xshift=-6pt,yshift=-6pt]v3) {$B_{gss_1s_2}$};
\end{tikzpicture}%
}
\caption{A quadrilateral walk in the group model of $\Gamma$. Each arrow is labeled by its dart name and, in bold, its group step.}
\label{fig:quadrilateral-darts}
\end{figure}

For this walk to be facial the equations 
$R\tau(g,s)_A=(gs,s_1^{-1})_B$, $\dots$, $R\tau(gss_1s_2,s_3^{-1})_B=(g,s)_A$ must be satisfied. The final equality requires both $ss_1s_2s_3=1_\Gamma$ and $\rho_A(s_3^{-1})=s$. Computing these using Equations~\ref{eq:facialpermutation1} and \ref{eq:facialpermutation2} requires the following assignments of $\rho_A$ and $\rho_B$,
$$\rho_B(s)=s_1^{-1},\qquad
\rho_A(s_1^{-1})=s_2,\qquad
\rho_B(s_2)=s_3^{-1},\qquad
\rho_A(s_3^{-1})=s.$$

To construct a fully vertex-transitive embedding we consider what is required for the involution $\sigma(A_g)=B_g$ and $\sigma(B_g)=A_g$ that exchanges the two classes to be an automorphism of our embedding. Since $S^{-1}=S$, it is an automorphism of the graph, and its action on darts is $\sigma(g,s)_A=(g,s^{-1})_B$ and $\sigma(g,s)_B=(g,s^{-1})_A$. We choose rotations so that $\sigma$ reverses orientation. Let $\delta$ be any dart. Then $R(\delta)$ is the successor of $\delta$ in the cyclic order at its initial vertex. Since $\sigma$ reverses orientation, $\sigma(R(\delta))$ must instead be the predecessor of $\sigma(\delta)$, which is $R^{-1}(\sigma(\delta))$. Applying this to $(g,s)_A$, we have
$$\begin{aligned}
\sigma R(g,s)_A
   &=\sigma(g,\rho_A(s))_A
     =(g,\rho_A(s)^{-1})_B,\\
R^{-1}\sigma(g,s)_A
   &=R^{-1}(g,s^{-1})_B
     =(g,\rho_B^{-1}(s^{-1}))_B.
\end{aligned}$$
Since $s$ was arbitrary, it follows that $\rho_A(s)^{-1}=\rho_B^{-1}(s^{-1})$ for every $s\in S$, equivalently,
$$\rho_B=\iota\rho_A^{-1}\iota.$$
The same relation gives $\iota\rho_B=\rho_A^{-1}\iota$. Also, $\rho_B$ is a $d$-cycle whenever $\rho_A$ is because it is conjugate to $\rho_A^{-1}$. Writing $s_0=s$ and $\rho=\rho_A$, both rotations are
therefore determined by a single permutation. The four facial walk assignments above therefore become, in order, $\rho(s_1)=s_0^{-1}$, $\rho(s_1^{-1})=s_2$, $\rho(s_3)=s_2^{-1}$, and $\rho(s_3^{-1})=s_0$. The first three determine $s_1,s_2,s_3$ respectively. The fourth is the condition that the $S$-label returns to $s_0$ after the fourth step. 

\begin{definition}\label{def:permutationrho}
Let $\rho:S\to S$ be a bijection. For $s\in S$, put $s_0=s$ and define $s_1,s_2,s_3\in S$ successively by
$$
\rho(s_1)=s_0^{-1},\qquad
\rho(s_1^{-1})=s_2,\qquad
\rho(s_3)=s_2^{-1}.
$$
If $\rho$ is a single $d$-cycle, we define its associated rotations by
$\rho_A=\rho$ and $\rho_B=\iota\rho^{-1}\iota$.
\end{definition}

\begin{lemma}[Quadrangular-face criterion for $\rho$]
\label{lem:quadface}
Let $\rho$ be a single $d$-cycle on $S$, with associated rotations
as in Definition~\ref{def:permutationrho}. Every facial orbit has length
four if and only if, for every $s\in S$,
\begin{align}
 s_0s_1s_2s_3&=1_{\G}, \tag{Q1}\label{eq:Q1}\\
 \rho(s_3^{-1})&=s_0. \tag{Q2}\label{eq:Q2}
\end{align}
Further, whenever these conditions hold, every facial walk is a simple cycle.
\end{lemma}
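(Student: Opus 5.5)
We compute the facial permutation $F=R\tau$ explicitly on darts, using (F1), (F2) and $\rho_A=\rho$, $\rho_B=\iota\rho^{-1}\iota$, and then track the orbit of an arbitrary dart $(g,s)_A$.

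\textbf{Step 1: one step of $F$.} From (F1), $F(g,s)_A=R(gs,s)_B=(gs,\rho_B(s))_B$. Since $\rho_B(s)=\rho^{-1}(s^{-1})^{-1}$, write $\rho_B(s)=s_1^{-1}$. This says $\rho(s_1)=s_0^{-1}$ with $s_0=s$, matching Definition~\ref{def:permutationrho}. Next, from (F2), $F(g',t)_B=R(g't^{-1},t)_A=(g't^{-1},\rho(t))_A$. Applied with $t=s_1^{-1}$ and $g'=gs$, this gives $F^2(g,s)_A=(gss_1,\rho(s_1^{-1}))_A=(gss_1,s_2)_A$.

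\textbf{Step 2: the orbit after four steps.} Repeating the computation gives
$$F^3(g,s)_A=(gss_1s_2,s_3^{-1})_B,\qquad F^4(g,s)_A=(gss_1s_2s_3,\rho(s_3^{-1}))_A.$$
This agrees with Figure~\ref{fig:quadrilateral-darts}. Since $F$ alternates between $A$-darts and $B$-darts, every orbit has even length. Each $B$-dart is $F$ of some $A$-dart, because $F$ is a bijection and $F$ maps $A$-darts onto $B$-darts. So every orbit contains an $A$-dart, and it suffices to consider orbits of $A$-darts.

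Every orbit has length four if and only if $F^4(g,s)_A=(g,s)_A$ for all $g,s$. It must also have length exactly four and not two. Length two would mean $F^2(g,s)_A=(g,s)_A$, which forces $ss_1=1$ and hence $s_1=s^{-1}$. Then the edge $A_gB_{gs}$ would be traversed in both directions consecutively, giving $\rho(s^{-1})=s^{-1}$. This makes $\rho$ have a fixed point, which contradicts $\rho$ being a $d$-cycle with $d>1$.

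Comparing components, $F^4(g,s)_A=(g,s)_A$ holds exactly when (Q1) and (Q2) hold. For the converse direction, if some facial orbit has length four, applying $F^4$ to its $A$-dart returns that dart, so (Q1) and (Q2) hold for that $s$. Since every $s\in S$ arises as an $A$-dart label, both conditions hold for every $s$.

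\textbf{Step 3: faces are simple cycles.} The vertices visited are $A_g$, $B_{gs}$, $A_{gss_1}$ and $B_{gss_1s_2}$.
\begin{itemize}
\item The two $A$-vertices coincide only if $ss_1=1$, which was excluded in Step 2.
\item The two $B$-vertices coincide only if $s_1s_2=1$, that is, $s_2=s_1^{-1}$. Then $\rho(s_1^{-1})=s_1^{-1}$, again a fixed point of $\rho$.
\end{itemize}
Vertices in different classes are distinct automatically. So the four vertices are distinct and the walk is a 4-cycle.

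The main subtlety is bookkeeping: the $B$-dart labels carry an inverse, and $\rho_B$ must be unwound correctly via $\iota$. The degenerate cases need the single-cycle hypothesis, and they require $d\ge 2$, which holds since $h\ge 11$.
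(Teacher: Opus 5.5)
Your proof is correct and follows the paper's own argument: compute $F^4(g,s)_A=(gss_1s_2s_3,\rho(s_3^{-1}))_A$, compare coordinates, use that every orbit contains an $A$-dart, and rule out coincident vertices (and hence orbits of length two) because the $d$-cycle $\rho$ has no fixed points. One wording slip: in the converse you should assume \emph{every} facial orbit has length four, not ``some,'' and then apply this to the orbit of $(g,s)_A$ for each $s\in S$.
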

\begin{proof}
By Equations~\ref{eq:facialpermutation1} and \ref{eq:facialpermutation2} and Definition~\ref{def:permutationrho}, successive applications of $F$ give
$$
F^4(g,s_0)_A=(gs_0s_1s_2s_3,\rho(s_3^{-1}))_A.
$$
Thus $F^4(g,s_0)_A=(g,s_0)_A$ if and only if \eqref{eq:Q1} and \eqref{eq:Q2} hold. The first and third vertices of the walk coincide only if $s_0s_1=1_\Gamma$, which gives $\rho(s_1)=s_0^{-1}=s_1$. The second and fourth coincide only if $s_1s_2=1_\Gamma$, which gives $\rho(s_1^{-1})=s_2=s_1^{-1}$. Both contradict that the $d$-cycle $\rho$ has no fixed point. Since the walk alternates between the two bipartition classes, its four vertices are distinct. Consequently, \eqref{eq:Q1} and \eqref{eq:Q2} hold exactly when the facial orbit is a simple quadrilateral. Every facial orbit contains an $A$-dart, so the result follows.
\end{proof}

The same choice of rotations gives the vertex-transitive conclusion of Theorem~\ref{thm:maintheorem}.

\begin{lemma}[Vertex transitivity]\label{lem:vertextransitivity}
Let $\rho$ be a single $d$-cycle on $S$, with associated rotations as in Definition~\ref{def:permutationrho}. Left multiplication by $\Gamma$ acts by orientation-preserving automorphisms of the resulting embedding, while $\sigma$ acts by an orientation-reversing automorphism. Moreover, $\Gamma \times \langle \sigma \rangle$ acts simply transitively on the vertices, and the resulting embedding is vertex-transitive.
\end{lemma}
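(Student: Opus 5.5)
The plan is to verify everything at the level of darts, using Equations~\eqref{eq:facialpermutation1} and \eqref{eq:facialpermutation2}. An orientation-preserving automorphism of the embedding is a graph automorphism whose dart action commutes with both $R$ and $\tau$. An orientation-reversing one must commute with $\tau$ and satisfy $\phi R=R^{-1}\phi$. In the latter case it conjugates $F=R\tau$ to $R^{-1}\tau=\tau(\tau R^{-1}\tau)$, which is conjugate to $F^{-1}$ via $\tau$, so faces are still carried to faces.

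\textbf{Left multiplication.} For $x\in\Gamma$, define $L_x(A_g)=A_{xg}$ and $L_x(B_g)=B_{xg}$. This preserves adjacency by \eqref{eq:connectionset}, since $(xg)^{-1}(xg')=g^{-1}g'$. On darts it acts by $L_x(g,s)_A=(xg,s)_A$ and $L_x(g,s)_B=(xg,s)_B$. Then $L_xR(g,s)_A=(xg,\rho_A(s))_A=RL_x(g,s)_A$, and likewise on $B$-darts. Also $L_x\tau(g,s)_A=(xgs,s)_B=\tau L_x(g,s)_A$, and similarly $L_x\tau(g,s)_B=(xgs^{-1},s)_A=\tau L_x(g,s)_B$. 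So $L_x$ commutes with $R$ and $\tau$, hence with $F$. It therefore preserves every local cyclic order and is orientation-preserving.

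\textbf{The swap $\sigma$.} Recall $\sigma(g,s)_A=(g,s^{-1})_B$ and $\sigma(g,s)_B=(g,s^{-1})_A$. We first check that $\sigma$ commutes with $\tau$:
\[\sigma\tau(g,s)_A=\sigma(gs,s)_B=(gs,s^{-1})_A,\qquad \tau\sigma(g,s)_A=\tau(g,s^{-1})_B=(g(s^{-1})^{-1},s^{-1})_A,\]
and these agree. The $B$-dart case is symmetric. Next, $\sigma R=R^{-1}\sigma$ on $A$-darts is exactly the identity $\rho_A(s)^{-1}=\rho_B^{-1}(s^{-1})$, which is equivalent to $\rho_B=\iota\rho_A^{-1}\iota$. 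This holds by Definition~\ref{def:permutationrho}. On $B$-darts we need $\rho_B(s)^{-1}=\rho_A^{-1}(s^{-1})$, i.e.\ $\iota\rho_B=\rho_A^{-1}\iota$, which the paper noted follows from the same relation. Hence $\sigma$ reverses every local rotation while preserving edges. It maps facial walks to reversed facial walks and is an orientation-reversing automorphism of the embedding. Here we use that $\rho$ is a $d$-cycle, so the rotation system defines a cellular orientable embedding on which these dart maps act as homeomorphisms.

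\textbf{Simple transitivity.} Since $\sigma L_x=L_x\sigma$ and $\sigma^2=1$, the group generated is $\Gamma\times\langle\sigma\rangle$, of order $2|\Gamma|=2n=|V|$. The $\Gamma$-orbit of $A_{1_\Gamma}$ is all of class $A$, and $\sigma$ then reaches class $B$. So the action is transitive, and equality of orders gives simple transitivity. Vertex-transitivity of the embedding follows. The only delicate point is the bookkeeping in the $\sigma$–$\tau$ and $\sigma$–$R$ checks on $B$-darts, which hinges on the chosen labeling convention $(g,s)_B:B_g\to A_{gs^{-1}}$.
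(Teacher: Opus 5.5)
Your proof is correct and follows essentially the same route as the paper. Left multiplication commutes with $R$ and $\tau$; $\sigma$ commutes with $\tau$ and satisfies $\sigma R=R^{-1}\sigma$ via $\rho_B=\iota\rho^{-1}\iota$; and transitivity together with $|\Gamma\times\langle\sigma\rangle|=2|\Gamma|=|V|$ gives regularity, with your explicit $B$-dart checks ($\sigma\tau=\tau\sigma$ and $\iota\rho_B=\rho_A^{-1}\iota$) filling in details the paper only asserts.
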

\begin{proof}
Fix $\gamma\in\Gamma$, and define
$\lambda_\gamma(g,s)_X=(\gamma g,s)_X$ for $X\in\{A,B\}$.
Since $R$ changes only the $S$-coordinate and $\tau$ multiplies the group coordinate on the right, $\lambda_\gamma$ commutes with both. Thus left multiplication acts by orientation-preserving automorphisms. The action of $\sigma$ on darts was given above. It commutes with $\tau$, and the calculation preceding Definition~\ref{def:permutationrho} gives $\sigma R=R^{-1}\sigma$. Hence $\sigma$ is orientation-reversing. Finally, $\sigma$ commutes with every $\lambda_\gamma$. Left multiplication acts simply transitively on each bipartition class, while $\sigma$ exchanges the two classes. Thus $\Gamma\times\langle\sigma\rangle$ is transitive and has order $2|\Gamma|$, equal to the number of vertices, so its action is regular.
\end{proof}

\subsection{Admissible rotation quadruples}
\label{sec:rotation-quadruples}

Using the rotations in Definition~\ref{def:permutationrho}, under the conditions of Lemma~\ref{lem:quadface} the assignments of $\rho$ have a useful structure. They define $\rho$ on exactly two inverse pairs, $[s_1]$ and $[s_3]$ and, their images under $\rho$ are exactly the two inverse pairs $[s_0]$ and $[s_2]$. To construct $\rho$, we consider these definitions as a partial assignment of the permutation.

\begin{definition}[Rotation quadruples]\label{def:rotationquadruple}
For $s_0,s_1,s_2,s_3\in S$, the rotation quadruple $Q(s_1,s_3^{-1}; s_2,s_0^{-1})$ determines the assignments
$$
\rho(s_1)=s_0^{-1},\qquad
\rho(s_1^{-1})=s_2,\qquad
\rho(s_3^{-1})=s_0,\qquad
\rho(s_3)=s_2^{-1}.
$$
Its domain and range are, respectively, $\{s_{1},s_{1}^{-1},s_3,s_{3}^{-1}\}$ and $\{s_0,s_0^{-1},s_2,s_2^{-1}\}$. We call it \emph{admissible} if $s_0s_1s_2s_3=1_\Gamma$ and its domain and range each contain four elements. A family of admissible rotation quadruples is \emph{complete} when its domains partition $S$ and its ranges partition $S$.
\end{definition}

Note that the labeling of a rotation quadruple is not unique. All of
$$Q(s_1,s_3^{-1};s_2,s_0^{-1}),\quad Q(s_3^{-1},s_1;s_2^{-1},s_0),\quad Q(s_1^{-1},s_3;s_0^{-1},s_2),\quad Q(s_3,s_1^{-1};s_0,s_2^{-1})$$
have the same domain and range and determine the same four assignments of $\rho$. Their closure relations are cyclic shifts of $s_0s_1s_2s_3=1_\Gamma$ or inverses of such cyclic shifts, so one is admissible if and only if all of them are. We now show that the assignments of Definition~\ref{def:rotationquadruple} coming from a complete family satisfy \eqref{eq:Q1} and \eqref{eq:Q2} of Lemma~\ref{lem:quadface}. Further, if $\rho$ is a single $d$-cycle then the assignments assemble a rotation system.

\begin{lemma}[Complete-family quadrangulation]\label{lem:completefamilyquad}
The assignments of a complete family form a bijection $\rho:S\to S$. With $s_0,s_1,s_2,s_3$ defined from this bijection as in Definition~\ref{def:permutationrho}, the bijection satisfies \eqref{eq:Q1} and \eqref{eq:Q2}. If the permutation $\rho$ is also a single $d$-cycle, then
$$\rho_A=\rho, \quad \rho_B=\iota \rho^{-1}\iota$$
define an orientable quadrangulation of $H(h,2)$.
\end{lemma}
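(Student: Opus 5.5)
We proceed in three steps: first show that the assignments define a bijection, then verify \eqref{eq:Q1} and \eqref{eq:Q2} by recovering the quadruple from the point $s$, and finally invoke Lemma~\ref{lem:quadface} to conclude.

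\textbf{The assignments give a bijection.} Each admissible quadruple $Q(s_1,s_3^{-1};s_2,s_0^{-1})$ assigns values on its four-element domain $\{s_1,s_1^{-1},s_3,s_3^{-1}\}$. It sends this domain bijectively onto its four-element range $\{s_0,s_0^{-1},s_2,s_2^{-1}\}$, since four distinct inputs map to the four listed outputs and the range has four elements by admissibility. Completeness says the domains partition $S$, so every $s\in S$ receives exactly one value and $\rho$ is a well-defined function. The ranges also partition $S$, and $\rho$ is bijective from each domain block onto its range block. Hence $\rho$ is a bijection of $S$.

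\textbf{Recovering the quadruple from $s$.} Fix $s\in S$ and put $s_0=s$. Because the ranges partition $S$, the element $s_0^{-1}$ lies in the range of a unique quadruple of the family. Using the four relabelings listed after Definition~\ref{def:rotationquadruple}, we may write that quadruple as $Q(t_1,t_3^{-1};t_2,t_0^{-1})$ with $t_0^{-1}=s_0^{-1}$. The relabelings realize each of the four range positions as the "$s_0^{-1}$" slot: the slots are $s_0^{-1}$, $s_0$, $s_2$, and $s_2^{-1}$ in turn, and all four share the same assignments and admissibility. So $t_0=s_0$, and the quadruple gives $\rho(t_1)=s_0^{-1}$. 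Since $\rho$ is injective and $\rho(s_1)=s_0^{-1}$ by definition, we get $s_1=t_1$. Next, $\rho(s_1^{-1})=t_2$ by the quadruple, while Definition~\ref{def:permutationrho} defines $s_2=\rho(s_1^{-1})$, so $s_2=t_2$. Likewise $\rho(t_3)=t_2^{-1}=s_2^{-1}$ by the quadruple, so injectivity gives $s_3=t_3$.

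\textbf{Checking \eqref{eq:Q1} and \eqref{eq:Q2}.} Admissibility of the quadruple gives $s_0s_1s_2s_3=t_0t_1t_2t_3=1_\Gamma$, which is \eqref{eq:Q1}. The fourth assignment of the quadruple gives $\rho(s_3^{-1})=\rho(t_3^{-1})=t_0=s_0$, which is \eqref{eq:Q2}.

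\textbf{Conclusion.} If $\rho$ is moreover a $d$-cycle, then $\rho_A=\rho$ and $\rho_B=\iota\rho^{-1}\iota$ are both $d$-cycles, as noted before Definition~\ref{def:permutationrho}. They therefore define a rotation system, hence a cellular orientable embedding. Lemma~\ref{lem:quadface} then shows that every facial walk is a quadrilateral, so the embedding is an orientable quadrangulation.

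The only delicate point is the relabeling step: we must confirm that $s_0^{-1}$ occupying any of the four range positions can be moved to the $s_0^{-1}$ slot by one of the four equivalent labelings. Inspecting the list shows this holds.
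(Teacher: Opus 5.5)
Your proposal is correct and follows essentially the same route as the paper: bijectivity comes from the domain and range partitions. You then take the unique quadruple whose range contains $s_0$, relabel it so that $s_0^{-1}$ occupies the last slot, identify its entries with the $s_1,s_2,s_3$ of Definition~\ref{def:permutationrho}, read off \eqref{eq:Q1} from admissibility and \eqref{eq:Q2} from the fourth assignment, and invoke Lemma~\ref{lem:quadface}, with your injectivity argument for $t_i=s_i$ and your check of the four relabelings simply spelling out what the paper states in one line.
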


\begin{proof}
The domain partition defines $\rho$ exactly once on every element of $S$, and the range partition makes it a bijection. Let $s_0\in S$. It lies in the range of a unique rotation quadruple, which by the relabelings above we may write as $Q(s_1,s_3^{-1};s_2,s_0^{-1})$. Its assignments are
$$
s_1=\rho^{-1}(s_0^{-1}),\qquad
s_2=\rho(s_1^{-1}),\qquad
s_3=\rho^{-1}(s_2^{-1}),
$$
so these are exactly the elements defined from $s_0$ in Definition~\ref{def:permutationrho}. Admissibility gives \eqref{eq:Q1}, while the remaining assignment $\rho(s_3^{-1})=s_0$ gives \eqref{eq:Q2}. Since $s_0$ was arbitrary, both conditions hold for every $s\in S$. If $\rho$ is a single $d$-cycle, Lemma~\ref{lem:quadface} shows that its associated rotations define an orientable quadrangulation of $H(h,2)$.
\end{proof}

\begin{corollary}\label{cor:genus-criterion}
If the bijection $\rho:S\to S$ determined by a complete family is a single $d$-cycle, then $H(h,2)$ has a vertex-transitive orientable quadrangulation and
$$
\gamma(H(h,2))=g_0.
$$
\end{corollary}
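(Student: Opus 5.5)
The corollary should follow by assembling the lemmas already in place. Suppose the bijection $\rho:S\to S$ from a complete family is a single $d$-cycle. By Lemma~\ref{lem:completefamilyquad}, the rotations $\rho_A=\rho$ and $\rho_B=\iota\rho^{-1}\iota$ are both $d$-cycles; $\rho_B$ is one because it is conjugate to $\rho^{-1}$. So they form a genuine rotation system on $H(h,2)$, since every vertex gets a single cyclic order of its $d$ darts, and every facial orbit is a simple quadrilateral. A rotation system always determines a cellular embedding in an orientable surface. Hence we have an orientable quadrangulation. Lemma~\ref{lem:vertextransitivity} then applies verbatim, because its only hypothesis is that $\rho$ is a single $d$-cycle. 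It shows that $\Gamma\times\langle\sigma\rangle$ acts transitively on vertices by automorphisms of the embedding, so the quadrangulation is vertex-transitive.

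For the genus, I will count with Euler's formula. The embedding has $V=2n=h(h-1)$ vertices and $E=nd=h(h-1)(h-2)(h-3)/4$ edges. Every face is a quadrilateral and each edge borders two face sides, so $F=2E/4=E/2$. The surface genus $g$ therefore satisfies
$$2-2g=V-E+F=V-\tfrac{E}{2}.$$
This gives $g=1-\tfrac{V}{2}+\tfrac{E}{4}$, which equals
$$1-\tfrac{h(h-1)}{2}+\tfrac{h(h-1)(h-2)(h-3)}{16}=g_0.$$
So $\gamma(H(h,2))\le g_0$. The matching lower bound $\gamma(H(h,2))\ge g_0$ was already derived in Section~\ref{sec:defs}: minimum-genus embeddings are cellular, and faces of a bipartite graph have length at least $4$, so $F\le E/2$. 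Combining the two bounds gives equality.

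There is no real obstacle here. The only point needing care is that the assignments form a rotation system only once $\rho$ is a single cycle. That is exactly the hypothesis, and it is the condition Lemmas~\ref{lem:quadface} and~\ref{lem:completefamilyquad} require. The genuine difficulty, exhibiting a complete family whose $\rho$ is a single cycle, is deferred to the later sections.
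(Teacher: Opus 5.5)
Your proposal is correct and follows essentially the same route as the paper. The paper's proof also takes Lemma~\ref{lem:completefamilyquad} for the orientable quadrangulation and Lemma~\ref{lem:vertextransitivity} for vertex-transitivity, then uses Euler's formula to get genus $g_0$ and closes with the lower bound $\gamma(H(h,2))\geq g_0$ from Section~\ref{sec:defs}; your explicit computation $F=E/2$, $g=1-V/2+E/4=g_0$ is just a spelled-out version of that step.
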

\begin{proof}
Lemma~\ref{lem:completefamilyquad} gives an orientable quadrangulation of $H(h,2)$, which is vertex-transitive by Lemma~\ref{lem:vertextransitivity}. It has genus $g_0$ by Euler's formula, and $\gamma(H(h,2))\geq g_0$ gives the result.
\end{proof}

Therefore we have seen that it remains to construct such a complete family and then to show that the permutation determined by its assignments can be chosen to be a single $d$-cycle.

\section{Affine coordinates and partial admissible families}
\label{sec:affine-coordinates}

The remaining work of this section can be thought of most simply as organizing the elements of $S$ in a deliberate way. Since the inverse pairs partition $S$ and every domain and range is a union of inverse pairs, an admissible family is complete if and only if each inverse pair occurs once among its domains and once among its ranges.

\subsection{Affine coordinates}\label{subsec:affinecoordinates}

As noted earlier, the domain and range of a rotation quadruple are two sets of two inverse pairs. It is therefore helpful to use a change of coordinates that simplifies inversion in $\Gamma$.

\begin{definition}[Change of coordinates]\label{def:changeofcoord}
For $k\in \Z_q$ and $w\in \F_h$, define
$$X_k(w)=\left(\frac{1-u^k}{2}+(1+u^k)w,k\right), \quad \pi_k(w)=\frac{1-u^k}{2}+(1+u^k)w, \quad v_k=\frac{1-u^k}{2(1+u^k)}.$$
When $k=0$, this gives $X_0(w)=(2w,0)$, the translation $z\mapsto z+2w$, and in particular $X_0(0)=1_\Gamma$.
\end{definition}

These coordinates turn inversion into a sign-change, as described in the next proposition.

\begin{proposition}[Excluded coordinates]\label{prop:excludedcoord}
For $k\neq0$, we have $X_k(w)^{-1}=X_{-k}(-w)$ and
$$X_k(w)\in S\iff w\notin \{1/2, -1/2, v_k, -v_k\}.$$
Moreover, the four values are distinct, $v_{-k}=-v_k$ and the maps $w\mapsto X_k(w), k\mapsto v_k$ are injective.
\end{proposition}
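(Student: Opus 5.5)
The plan is a direct computation in $\Gamma$, using the composition law for affine maps. The group $\Gamma$ consists of maps $z\mapsto u^kz+c$, so $(c,k)(c',k')=(c+u^kc',k+k')$ and $(c,k)^{-1}=(-u^{-k}c,-k)$.

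\textbf{Inversion formula.} Write $X_k(w)=(\pi_k(w),k)$. Its inverse is $(-u^{-k}\pi_k(w),-k)$, so the identity $X_k(w)^{-1}=X_{-k}(-w)$ reduces to
$$-u^{-k}\Bigl(\tfrac{1-u^k}{2}+(1+u^k)w\Bigr)=\tfrac{1-u^{-k}}{2}-(1+u^{-k})w.$$
This follows by comparing the constant terms and the coefficients of $w$. The constant terms are $-u^{-k}(1-u^k)/2=(1-u^{-k})/2$, and the $w$-coefficients are $-u^{-k}(1+u^k)=-(1+u^{-k})$. In the same way, $v_{-k}=-v_k$ follows by multiplying the numerator and denominator of $v_{-k}$ by $u^k$. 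Note that $1+u^k\neq0$, since $-1$ is a nonsquare when $h\equiv3\pmod 4$, so $X_k$ and $v_k$ are well defined.

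\textbf{Membership in $S$.} By Definition~\ref{def:connectionset}, $X_k(w)\in S$ exactly when $\pi_k(w)\notin\{0,1,-u^k,1-u^k\}$. The map $\pi_k$ is affine with nonzero slope, hence a bijection of $\F_h$; this already gives injectivity of $w\mapsto X_k(w)$ for fixed $k$, and $k$ is a coordinate of $X_k(w)$. Solving $\pi_k(w)=c$ gives $w=(c-\tfrac{1-u^k}{2})/(1+u^k)$. For $c=0$ this is $-v_k$, and for $c=1-u^k$ it is $v_k$. For $c=1$ the numerator is $(1+u^k)/2$, giving $w=1/2$, and for $c=-u^k$ the numerator is $-(1+u^k)/2$, giving $w=-1/2$. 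Distinctness of the four values follows from distinctness of the four excluded values of $c$, which was shown in Proposition~\ref{prop:connectionset} for $k\neq0$. Equivalently, one could check directly that $v_k\neq\pm1/2$ iff $u^k\neq0$ and $1\neq0$, and that $v_k\neq-v_k$ iff $u^k\neq1$.

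\textbf{Injectivity of $k\mapsto v_k$.} The map $x\mapsto\frac{1-x}{2(1+x)}$ is a Möbius transformation, injective on $\F_h\setminus\{-1\}$. Since $u$ has order $q$, the powers $u^k$ are distinct for distinct $k\in\Z_q$, so the values $v_k$ are distinct. This is the only step needing any care, and it is still routine: one clears denominators in $v_k=v_{k'}$ to get $u^k=u^{k'}$.
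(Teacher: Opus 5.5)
Your proof is correct and follows essentially the same direct computation as the paper: you derive the inverse formula from the composition law, solve $\pi_k(w)=c$ for the four excluded values of $c$ using $1+u^k\neq 0$, and clear denominators to get injectivity of $k\mapsto v_k$. The only small difference is how you get distinctness of the four excluded $w$-values. You transport the distinctness of $\{0,1,-u^k,1-u^k\}$ from Proposition~\ref{prop:connectionset} through the bijection $\pi_k$, whereas the paper checks $v_k\notin\{0,\pm\tfrac12\}$ directly. Both arguments are valid.
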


\begin{proof}
Direct computation shows the first claim.
\begin{align*}
    X_k(w)^{-1}&=\left(-u^{-k}\left(\frac{1-u^k}{2}+(1+u^k)w\right), -k\right)\\
    &=\left(\frac{1-u^{-k}}{2}-(1+u^{-k})w,-k\right)=X_{-k}(-w).
\end{align*}
Since $-1$ is a non-square and $u^k$ is a square, $1+u^k\neq 0$ and $v_k$ is well-defined. By Definition~\ref{def:connectionset}, $X_k(w)\in S$ if and only if $\pi_k(w)\notin \{0,1,-u^k, 1-u^k\}$. The four described equalities imply $w=-v_k, w=1/2, w=-1/2, w=v_k$ respectively. To see the four quantities are distinct, consider the equalities $v_k=0,v_k=1/2,$ and $v_k=-1/2$. The first contradicts $k\neq 0$, and the other two are impossible. Since $h$ is odd, this also gives $v_k\neq -v_k$ and $1/2\neq -1/2$. The coefficient $1+u^k$ in $\pi_k(w)$ is nonzero, so $w\mapsto X_k(w)$ is injective. Also, $v_{-k}=\frac{u^k-1}{2(u^k+1)}=-v_k$. Finally, if $v_m=v_k$, then cross multiplication gives $2u^m=2u^k$ which implies $m=k$ in $\Z_q$ and the map $k\mapsto v_k$ is injective.
\end{proof}
Proposition~\ref{prop:excludedcoord} shows that inversion changes the sign of both the exponent and the coordinate. We therefore decide to group the exponents $k$ and $-k$, and likewise the coordinates $w$ and $-w$. Since $q$ is odd, the $q-1$ nonzero elements of $\Z_q$ form $r=(q-1)/2=(h-3)/4$ exponent pairs. To construct our eventual complete family of admissible rotation quadruples, we first consider the elements of $S$ with nonzero exponent and nonzero coordinate argument. The zero-coordinate and translation elements of $S$ will be handled later.

\begin{definition}[Coordinate fibers and exponent pairs]\label{def:coordinatefibers}
For $w\in  \F_h$ we call the set $\{w,-w\}$ a \textit{coordinate fiber}. Every coordinate fiber other than $\{0\}$ has a unique representative in $\{1, \dots, q\}$, and we refer to such a fiber by that representative. The fiber $\{\pm 1/2\}$ has representative $q$ and is called the \textit{totally excluded fiber}. The fibers represented by $1,\ldots,q-1$ are the \textit{nonzero coordinate fibers}. Similarly, for $k\in \Z_q\setminus \{0\}$ we call the set $\{k,-k\}$ an \textit{exponent pair} and refer to any nonzero exponent pair as its unique representative in $\{1, \dots , r\}$. For an exponent pair $\{k,-k\}$ and a nonzero coordinate fiber $\{w,-w\}$, write
$$X_{\pm k}(\pm w)=\{X_k(w), X_k(-w), X_{-k}(w), X_{-k}(-w)\}.$$ We say that $\{k,-k\}$ is \textit{present} at $\{w, -w\}$ if $X_{\pm k}(\pm w) \subseteq S$, and is \textit{absent} otherwise.

\end{definition}

If $w\in \{1, \dots, q-1\}$ represents a nonzero coordinate fiber, then it does not represent the totally excluded fiber. By Proposition~\ref{prop:excludedcoord} the four elements $X_{\pm k}(\pm w)$ belong to $S$ if and only if $w\notin \{v_k,-v_k\}$. Therefore either all four elements belong to $S$ or none do. Suppose $\{w,-w\}$ is a nonzero coordinate fiber and the exponent pairs $\{k,-k\}$ and $\{m, -m\}$ are distinct and present at $\{w,-w\}$. The purpose of the following lemma is for $\rho$ to map $X_{\pm k}(\pm w)$ onto $X_{\pm m}(\pm w)$ over the same coordinate fiber.

\begin{lemma}[Exponent-pair transition quadruple]\label{lem:exponentpairtransition}
Suppose $\{k,-k\}$ and $\{m, -m\}$ are exponent pairs present at the coordinate fiber represented by $w\in \{1, \dots ,q-1\}$. Then the rotation quadruple
$$Q(X_k(w), X_k(-w); X_m(-w), X_m(w))$$
is admissible, its domain is $X_{\pm k}(\pm w)$ and its range is $X_{\pm m}(\pm w)$. It determines the assignments
\small
$$\rho(X_k(w))=X_m(w), \quad \rho(X_{-k}(-w))=X_m(-w), \quad \rho(X_k(-w))=X_{-m}(-w), \quad \rho(X_{-k}(w))=X_{-m}(w).$$
We refer to this as the \textit{canonical transition quadruple}. 
\end{lemma}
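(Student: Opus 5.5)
The plan is to match the quadruple with the template $Q(s_1,s_3^{-1};s_2,s_0^{-1})$ of Definition~\ref{def:rotationquadruple}, read off $s_0,\dots,s_3$ using the inversion rule of Proposition~\ref{prop:excludedcoord}, and then check the two admissibility conditions. Matching gives $s_1=X_k(w)$ and $s_3^{-1}=X_k(-w)$, so $s_3=X_{-k}(w)$. It also gives $s_2=X_m(-w)$ and $s_0^{-1}=X_m(w)$, so $s_0=X_{-m}(-w)$. Then $s_1^{-1}=X_{-k}(-w)$ and $s_2^{-1}=X_{-m}(w)$. Substituting these into the four assignments of Definition~\ref{def:rotationquadruple} yields exactly the four displayed assignments. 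The domain $\{s_1,s_1^{-1},s_3,s_3^{-1}\}$ is $X_{\pm k}(\pm w)$ and the range is $X_{\pm m}(\pm w)$. Both lie in $S$ because both exponent pairs are present at the fiber.

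\textbf{Four elements.} The domain and range each have four elements by Proposition~\ref{prop:excludedcoord}. The map $w\mapsto X_k(w)$ is injective, and $w\neq -w$ since $w\neq 0$ and $h$ is odd. The exponents $k\ne -k$ are distinct in $\Z_q$ because $q$ is odd and $k\neq0$. Elements with different exponents are different elements of $\Gamma$, so the four elements are distinct. The same argument applies to $m$.

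\textbf{The product relation.} The main step is $s_0s_1s_2s_3=1_\Gamma$, with multiplication given by composition of affine maps. The exponents sum to $-m+k+m-k=0$, so only the translation part needs checking. Rather than expanding coordinates, I would use the factorization suggested by the coordinates. Write $t_a$ for the translation $z\mapsto z+a$ and $M_k$ for $z\mapsto u^kz$. A direct check shows that $X_k(w)$ is the map
\[
z\mapsto u^k\bigl(z-\tfrac12+w\bigr)+\tfrac12+w .
\]
Conjugating by $t_{1/2}$, that is, working in the coordinate $y=z-\tfrac12$, this becomes $X_k(w)=t_wM_kt_w$.

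The product then telescopes:
\[
(t_{-w}M_{-m}t_{-w})(t_wM_kt_w)(t_{-w}M_mt_{-w})(t_wM_{-k}t_w)
= t_{-w}\,M_{-m}M_kM_mM_{-k}\,t_w .
\]
The multiplications commute, so the middle factor is $M_0$ and the product is $t_{-w}t_w=1_\Gamma$. Conjugation by $t_{1/2}$ preserves the identity, so the relation holds in $\Gamma$. This proves admissibility. I expect the only real obstacle to be keeping the composition convention and the inverse labels consistent. The telescoping identity itself is routine.
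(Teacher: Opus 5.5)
Your proof is correct. Its overall structure is the same as the paper's: you match the template of Definition~\ref{def:rotationquadruple}, read off the four assignments, and get four distinct elements from injectivity of $w\mapsto X_k(w)$ together with $w\neq -w$ and $k\neq -k$. The genuine difference is in how you verify the closure identity. The paper checks $X_k(w)X_m(-w)X_{-k}(w)=X_m(w)$, that is $s_1s_2s_3=s_0^{-1}$, by expanding with the group law $(c,k)(c',k')=(c+u^kc',k+k')$. It then simply asserts that $\pi_k(w)+u^k\pi_m(-w)+u^{k+m}\pi_{-k}(w)$ reduces to $\pi_m(w)$. You instead observe that $t_{-1/2}X_k(w)t_{1/2}=t_wM_kt_w$, so the product telescopes. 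Your route is more conceptual. The palindromic form explains at once the inversion formula $X_k(w)^{-1}=X_{-k}(-w)$ of Proposition~\ref{prop:excludedcoord} and why products alternating between the arguments $w$ and $-w$ collapse. It therefore shows why the change of coordinates in Definition~\ref{def:changeofcoord} was chosen. It also removes your worry about the composition convention, since the reversed product $s_3s_2s_1s_0$ telescopes in exactly the same way. The paper's direct expansion is shorter on the page, but it leaves a routine cancellation to the reader and gives no insight into why the identity holds.
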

\begin{proof}
\begin{align*}
X_k(w)X_m(-w)X_{-k}(w)
&=\bigl(
\pi_k(w)+u^k\pi_m(-w)+u^{k+m}\pi_{-k}(w),m
\bigr)\\
&=(\pi_m(w),m)\\
&=X_m(w).
\end{align*}
Thus the admissibility closure equation is satisfied. If $k=-k$, then $2k=0$, and since $q$ is odd this gives $k=0$ in $\Z_q$, a contradiction. The same argument applies for $m$. Also, $w\neq -w$ since $h$ is odd and $w\neq 0$. Applying the injectivity of Proposition~\ref{prop:excludedcoord} shows the domain and range both contain four distinct elements. Thus the quadruple is admissible. Its four assignments are given by Definition~\ref{def:rotationquadruple} and the inverse formula of Proposition~\ref{prop:excludedcoord}.
\end{proof}

\begin{definition}[Sign switch]\label{def:signswitch}
Replacing $m$ by $-m$ in the canonical transition quadruple of Lemma~\ref{lem:exponentpairtransition} gives another admissible rotation quadruple with the same domain and range. It changes the sign of the exponent of every image and leaves every coordinate argument $\pm w$ unchanged. We call this replacement a \textit{sign switch}, or \textit{the sign switch} of a canonical transition quadruple.
\end{definition}

For each coordinate fiber represented by $w\in\{1,\ldots,q-1\}$, we begin assembling our partial admissible family by placing the exponent pairs present at that fiber in a chosen order. Suppose, for example, that the chosen order is
$$
\{\pm k_0\}\longrightarrow
\{\pm k_1\}\longrightarrow\cdots\longrightarrow
\{\pm k_t\}.
$$
For each $0\leq i<t$, append the canonical transition quadruple of
Lemma~\ref{lem:exponentpairtransition} for the consecutive pairs
$\{\pm k_i\}$ and $\{\pm k_{i+1}\}$, with domain
$X_{\pm k_i}(\pm w)$ and range $X_{\pm k_{i+1}}(\pm w)$. The four elements belonging to every interior exponent pair occur once in a domain and once in a range. The elements belonging to $\{\pm k_0\}$ occur only in a domain, while those belonging to $\{\pm k_t\}$ occur only in a range. Thus, the two inverse pairs belonging to the first exponent pair have not occurred in a range, and the two belonging to the last have not occurred in a domain. We now select three distinguished exponent pairs from which these endpoints will be chosen.

\begin{lemma}[The exponents $K,N,$ and $E$]\label{lem:distinguishedexp}
There exist unique elements $K,N,E\in \Z_q\setminus\{0\}$ such that
$$\displayfontsmall u^K=-\frac{1}{2}, \quad u^N=\begin{cases}
    -\frac{1}{3}, \quad h\equiv 1 \mod 3, \\
    -\frac{2}{3}, \quad h\equiv 2 \mod 3,
\end{cases}\quad u^E=4.$$
When $h\geq 19$, the exponent pairs $\{K,-K\}, \{N,-N\}, \{E,-E\}$ are pairwise distinct. 
\end{lemma}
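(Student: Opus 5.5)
Existence and uniqueness reduce to showing that each of $-\tfrac12$, $-\tfrac13$ or $-\tfrac23$, and $4$ is a nonzero square in $\F_h$ different from $1$. Recall that $u$ generates the cyclic group of nonzero squares, which has order $q$, so $k\mapsto u^k$ is a bijection from $\Z_q$ onto this group. Hence each such value equals $u^k$ for exactly one $k\in\Z_q$, and $k\neq 0$ precisely because the value is not $1$.

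Squareness I will check with the quadratic character $\chi$, using $h\equiv 3\pmod 8$.
\begin{itemize}[leftmargin=*]
\item Since $h\equiv3\pmod4$, we have $\chi(-1)=-1$, and since $h\equiv3\pmod8$, we have $\chi(2)=-1$. Hence $\chi(-\tfrac12)=\chi(-1)\chi(2)=1$.
\item $4=2^2$ is trivially a square.
\item For $N$ I will use quadratic reciprocity. As $h\equiv3\pmod4$, we get $\chi(3)=-\left(\tfrac{h}{3}\right)$. If $h\equiv1\pmod3$, then $\chi(3)=-1$, so $\chi(-\tfrac13)=\chi(-1)\chi(3)=1$. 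If $h\equiv2\pmod3$, then $\chi(3)=1$, and $\chi(-\tfrac23)=\chi(-1)\chi(2)\chi(3)=(-1)(-1)(1)=1$.
\end{itemize}
None of these values equals $1$. Indeed, $-\tfrac12=1$ gives $h\mid3$, $-\tfrac13=1$ gives $h\mid4$, $-\tfrac23=1$ gives $h\mid5$, and $4=1$ gives $h\mid3$. All are impossible for a prime $h>3$ with $h\equiv3\pmod8$, since the smallest such prime is $11$.

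For pairwise distinctness, note that $\{a,-a\}=\{b,-b\}$ holds iff $u^a=u^b$ or $u^a=u^{-b}$. So I must rule out $x=y$ and $xy=1$ for each pair of values $x,y$, and each condition reduces to $h$ dividing a small integer.
\begin{itemize}[leftmargin=*]
\item $K$ and $E$: $-\tfrac12=4$ gives $h\mid 9$, and $-\tfrac12\cdot4=-2=1$ gives $h\mid3$.
\item $K$ and $N$ when $h\equiv1\pmod3$: $-\tfrac12=-\tfrac13$ gives $h\mid1$, and $\tfrac16=1$ gives $h\mid5$.
\item $K$ and $N$ when $h\equiv2\pmod3$: $-\tfrac12=-\tfrac23$ gives $h\mid1$, and $\tfrac13=1$ gives $h\mid2$.
\item $N$ and $E$ when $h\equiv1\pmod3$: $-\tfrac13=4$ gives $h\mid13$, and $-\tfrac43=1$ gives $h\mid7$.
\item $N$ and $E$ when $h\equiv2\pmod3$: $-\tfrac23=4$ gives $h\mid14$, and $-\tfrac83=1$ gives $h\mid11$.
\end{itemize}
The only primes that occur are $3,5,7,11,13$, all below $19$, which establishes the claim for $h\ge 19$. (Among admissible primes, $h=11$ genuinely fails, since there $u^N\cdot u^E=1$.)

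The only nonroutine step is the reciprocity computation for $\chi(3)$. This is exactly what dictates the case split in the definition of $u^N$, and it is where care is needed.
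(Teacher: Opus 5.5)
Your proof is correct and follows essentially the same route as the paper. Squareness comes from $\chi(-1)=\chi(2)=-1$ and the value of $\chi(3)$: you obtain it by reciprocity, the paper by reducing $h \bmod 12$. Distinctness comes from ruling out $x=y$ and $xy=1$ among the multipliers, each case reducing to $h$ dividing a small integer. Your explicit enumeration is slightly sharper than the paper's: it shows $K,N,E\neq 0$ for every admissible $h$, not just $h\geq 19$, and it identifies $h=11$ (where $N=-E$) as the genuine exception.
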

\begin{proof}
Since $h\equiv3\pmod8$, we have $\chi(-1)=\chi(2)=-1$, so $-1/2$ and $4$ are squares. Moreover, $h\equiv7\pmod{12}$ when $h\equiv1\pmod3$, and $h\equiv11\pmod{12}$ when $h\equiv2\pmod3$. Hence $\chi(3)=-1$ and $1$, respectively, so the prescribed value of $u^N$ is a square in either case. Since $u$ generates the nonzero squares of $\F_h$, the exponents $K,N,E$ exist uniquely. The corresponding sets of reciprocal multipliers are
$$
\displayfontfootnotesize \{u^K,u^{-K}\}=\left\{-\frac12,-2\right\},\qquad
\{u^E,u^{-E}\}=\left\{4,\frac14\right\},\qquad \{u^N,u^{-N}\}=
\begin{cases}
\left\{-\dfrac13,-3\right\},&h\equiv1\pmod3,\\[3pt]
\left\{-\dfrac23,-\dfrac32\right\},&h\equiv2\pmod3.
\end{cases}
$$
For the remainder of this section, assume $h\geq19$. In either of the cases, these three sets are pairwise disjoint and none contains $1$. After clearing denominators, any equality among them or with $1$ would force $h$ to divide a nonzero integer of absolute value at most $15$, whereas $h\geq19$. Therefore $K,N,E$ are nonzero and their exponent pairs are pairwise distinct.
\end{proof}

In the orderings of our exponent pairs over each coordinate fiber, we place the $N$-exponent pair as the first pair and the $K$-exponent pair as the last pair, in all but two coordinate fibers. As seen in Proposition~\ref{prop:excludedcoord}, each exponent pair is absent from exactly one nonzero coordinate fiber, namely $\{\pm v_k\}$. Accordingly, we use the $E$-exponent pair as the first pair over the one coordinate fiber where the $N$-pair is absent, and as the last pair over the one coordinate fiber where the $K$-pair is absent. Since the $K$- and $N$-exponent pairs are distinct, Proposition~\ref{prop:excludedcoord} shows that these are distinct fibers, so the $E$-exponent pair is never required to come both first and last over the same coordinate fiber.

\begin{proposition}[Absent fibers of the distinguished exponent pairs]\label{prop:absentcoordinatefibers}
Let $h\geq 19$. Every exponent pair $\{k,-k\}$ is absent from exactly the coordinate fiber $\{\pm v_k\}$, and distinct exponent pairs have distinct absent fibers. For the distinguished exponents $K,N,$ and $E$,
$$
\displayfontsmall v_K=\frac32,\qquad
v_N=
\begin{cases}
1,&h\equiv1\pmod3,\\[2pt]
\dfrac52,&h\equiv2\pmod3,
\end{cases}
\qquad
v_E=-\frac3{10}.
$$
Thus the $K$-exponent pair is absent exactly from the coordinate fiber represented by $q-1$, while the $N$-exponent pair is absent exactly from the fiber represented by
$$
\displayfontsmall w_0=
\begin{cases}
1,&h\equiv1\pmod3,\\
q-2,&h\equiv2\pmod3.
\end{cases}
$$
The $E$-exponent pair is present at both $q-1$ and $w_0$.
\end{proposition}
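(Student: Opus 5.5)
The proposition is a direct computation, so I will simply unpack the definitions from Proposition~\ref{prop:excludedcoord} and Lemma~\ref{lem:distinguishedexp}.

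\textbf{Absent fibers.} By Proposition~\ref{prop:excludedcoord}, for $k\neq0$ and a nonzero coordinate fiber $\{\pm w\}$ with $w\neq\pm1/2$, the four elements $X_{\pm k}(\pm w)$ lie in $S$ exactly when $w\notin\{\pm v_k\}$. We have $v_k\neq0$ and $v_k\neq\pm1/2$. Hence $\{\pm v_k\}$ is a nonzero coordinate fiber, and it is the unique one from which $\{k,-k\}$ is absent. It depends only on the pair, because $v_{-k}=-v_k$.

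\textbf{Distinct pairs have distinct absent fibers.} Suppose $\{\pm v_k\}=\{\pm v_m\}$. Then $v_m=v_k$ or $v_m=-v_k=v_{-k}$. Injectivity of $k\mapsto v_k$ then gives $m=\pm k$, so the two exponent pairs coincide.

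\textbf{The three values.} Write $v_k=\frac{1-x}{2(1+x)}$ with $x=u^k$, and substitute the values from Lemma~\ref{lem:distinguishedexp}.
\begin{itemize}[leftmargin=2em]
\item For $x=-1/2$: $v_K=\frac{3/2}{2\cdot 1/2}=3/2$.
\item For $x=-1/3$: $v_N=\frac{4/3}{4/3}=1$.
\item For $x=-2/3$: $v_N=\frac{5/3}{2/3}=5/2$.
\item For $x=4$: $v_E=\frac{-3}{10}$.
\end{itemize}
These denominators are invertible since $h\geq19$.

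\textbf{Representatives in $\{1,\dots,q\}$.} Recall $q=(h-1)/2$, so $1/2\equiv -q$.
\begin{itemize}[leftmargin=2em]
\item $3/2=1+1/2\equiv 1-q=-(q-1)$. So $v_K$ lies in the fiber represented by $q-1$.
\item $5/2=2+1/2\equiv 2-q=-(q-2)$. So $v_N$ lies in the fiber represented by $q-2$ when $h\equiv2\pmod 3$.
\item $v_N=1$ gives the fiber represented by $1$ when $h\equiv1\pmod3$.
\end{itemize}
This yields $w_0$ as stated. For these representatives to be nonzero fibers distinct from the excluded fiber $q$, we need $q-2\geq1$, which holds.

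\textbf{$E$ is present at $q-1$ and $w_0$.} The $E$-pair is present at a nonzero fiber unless it is $\{\pm v_E\}=\{\pm 3/10\}$. So it suffices to show $3/10\not\equiv\pm3/2$, $3/10\not\equiv\pm1$, and $3/10\not\equiv\pm5/2$ in the relevant cases. Clearing denominators, these reduce to $h$ dividing one of
\[
3\mp15,\qquad 3\mp10,\qquad 3\mp25.
\]
These are $-12,\ 18,\ -7,\ 13,\ -22,\ 28$. Since $h\geq19$ is prime, the only possible collision is $h\mid 22$ or $h\mid 28$, that is, $h\in\{2,7,11\}$, all excluded. The main (minor) obstacle is just this bookkeeping: checking that $h\geq 19$ rules out every collision, including in the last case with $-22$ and $28$, and that the nonzero-fiber conventions are respected. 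Each case should be verified directly rather than by a blanket bound.
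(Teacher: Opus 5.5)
Your proof is correct and matches the paper's argument for the absent-fiber claim, for the injectivity of absent fibers, for the substituted values of $v_K,v_N,v_E$, and for the identifications $q-1=-\tfrac32$ and $q-2=-\tfrac52$ in $\F_h$. The only divergence is the last step: the paper gets presence of $\{\pm E\}$ at $q-1$ and $w_0$ by combining the injectivity you proved in your second paragraph with the distinctness of the $K$-, $N$-, and $E$-pairs from Lemma~\ref{lem:distinguishedexp}, whereas you re-check it by clearing denominators; your check is valid, since no prime $h\ge 19$ divides $12,18,7,13,22,28$, but it is redundant given your own injectivity step.
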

\begin{proof}
Proposition~\ref{prop:excludedcoord} shows that $\{k,-k\}$ is absent exactly from $\{\pm v_k\}$, and that distinct exponent pairs have distinct absent fibers. Substitution into the definition of $v_k$ gives the displayed values. Since $q-1=-\frac32,\quad q-2=-\frac52$
in $\F_h$, the claimed fibers for $K$ and $N$ follow. Finally, the $E$-exponent pair is distinct from both by Lemma~\ref{lem:distinguishedexp}, so it is present at both exceptional fibers.
\end{proof}

\subsection{Coordinate-fiber chains}

We now make the construction alluded to following Definition~\ref{def:signswitch} concrete. We refer to the chosen order over a given coordinate fiber as a coordinate-fiber chain, since it chains together the canonical transitions between consecutive present exponent pairs. After constructing these chains, it remains to join their endpoint inverse pairs and to add the zero-coordinate and translation elements of $S$. We purposefully leave the interior order of the exponent pairs unspecified and keep the option of applying sign switches, giving us freedom later to make $\rho$ a single $d$-cycle.

\begin{construction}[Coordinate-fiber chains]\label{con:coordinatefiberchains}
For each $w\in \{1, \dots ,q-1\}$, order all of the exponent pairs present at the coordinate fiber represented by $w$ with the following endpoints
$$
\begin{array}{c|c|c}
\text{coordinate fiber}&\text{first exponent pair}&
\text{last exponent pair}\\ \hline
w_0&\{\pm E\}&\{\pm K\}\\
w\notin\{w_0,q-1\}&\{\pm N\}&\{\pm K\}\\
q-1&\{\pm N\}&\{\pm E\}.
\end{array}
$$
We leave the other pairs unordered. Choose an oriented representative of each pair, using $K$, $N$, and $E$ for the distinguished pairs. At the fiber $1$, use the representatives $a$ and $b$ chosen later in Construction~\ref{con:zerocoordinatereplacements}. If consecutive oriented representatives are $k$ and $m$, append the canonical transition quadruple of Lemma~\ref{lem:exponentpairtransition}.
\end{construction}

The coordinate-fiber chains are defined by ordering exponent pairs, but the domains and ranges of their transition quadruples consist of elements of $S$ not $\Z_q$. When an exponent pair $\{k,-k\}\subseteq \Z_q$ is present at the coordinate fiber represented by $w$, its four associated elements are the two inverse pairs $X_{\pm k}(\pm w)=[X_k(w)]\mathbin{\cup}[X_k(-w)]$. 

\begin{proposition}\label{prop:coordinatefiberchainpairs}
The inverse pairs belonging to present exponent pairs at nonzero coordinate
fibers that do not occur in the domain of any quadruple in
Construction~\ref{con:coordinatefiberchains} are exactly
$[X_K(w)]$ and $[X_K(-w)]$ for $1\leq w\leq q-2$, together with
$[X_E(q-1)]$ and $[X_E(-(q-1))]$. Among the same inverse pairs, those that do not occur in the range of any quadruple in the
construction are exactly $[X_N(w)]$ and $[X_N(-w)]$ for $w\in\{1,\ldots,q-1\}\setminus\{w_0\}$, together with $[X_E(w_0)]$ and $[X_E(-w_0)]$. Every other inverse pair belonging to a present exponent pair at a nonzero coordinate fiber occurs in exactly one domain and exactly one range.
\end{proposition}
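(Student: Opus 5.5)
The argument is bookkeeping: fix a nonzero coordinate fiber represented by $w\in\{1,\ldots,q-1\}$ and trace which inverse pairs lie in domains and ranges of that fiber's chain. Inverse pairs over different fibers are disjoint, since coordinate arguments $\pm w$ are preserved by inversion (Proposition~\ref{prop:excludedcoord}). Each canonical transition quadruple of Lemma~\ref{lem:exponentpairtransition} has domain and range over the same fiber $w$. So the count splits fiber by fiber, and it suffices to treat each chain separately.

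Fix $w$ and let the chain be $\{\pm k_0\}\to\cdots\to\{\pm k_t\}$. Its entries are exactly the exponent pairs present at $w$, each appearing once; by Proposition~\ref{prop:absentcoordinatefibers} this is every pair except $\{\pm v\}$ with $v=\pm v_k$ equal to $\pm w$. The quadruple joining $k_i$ and $k_{i+1}$ has domain $X_{\pm k_i}(\pm w)=[X_{k_i}(w)]\cup[X_{k_i}(-w)]$ and range $X_{\pm k_{i+1}}(\pm w)$, as stated in Lemma~\ref{lem:exponentpairtransition}. This holds regardless of the chosen oriented representatives or any sign switch (Definition~\ref{def:signswitch}), since the set $X_{\pm k}(\pm w)$ depends only on the pair. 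Therefore the exponent pairs $k_0,\ldots,k_{t-1}$ each contribute their two inverse pairs to exactly one domain, and $k_1,\ldots,k_t$ each contribute to exactly one range. Interior pairs therefore appear exactly once in each. The only pairs missing from domains are those of the last exponent pair $k_t$, and the only pairs missing from ranges are those of the first exponent pair $k_0$.

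Next I read off the endpoints from Construction~\ref{con:coordinatefiberchains}. The last pair is $\{\pm K\}$ for $w\ne q-1$, that is, for $1\le w\le q-2$; it is $\{\pm E\}$ for $w=q-1$. The first pair is $\{\pm N\}$ for $w\ne w_0$ and $\{\pm E\}$ for $w=w_0$. This yields exactly the listed sets. I must also confirm that each chain is well defined, which requires three facts. First, the specified endpoints are present at $w$: $K$ is absent only at $q-1$, $N$ only at $w_0$, and $E$ is present at both exceptional fibers (Proposition~\ref{prop:absentcoordinatefibers}). Second, first and last are distinct. This holds because $K,N,E$ are pairwise distinct (Lemma~\ref{lem:distinguishedexp}) and $w_0\ne q-1$, which follows since $K\ne N$ gives distinct absent fibers. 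Third, $t\ge1$, so the endpoints are genuinely distinct chain members. This holds because at least $r-1\ge 2$ pairs are present when $h\ge19$.

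The only subtle point is the fiber $w=1$, where representatives $a,b$ are deferred to Construction~\ref{con:zerocoordinatereplacements}. I would note that the choice of representative never alters domain or range sets, so the count there is unaffected. A further subtlety arises when $h\equiv1\pmod 3$, since then $w_0=1$; this case is still covered by the same endpoint table. I expect no real obstacle, only careful verification that the distinctness facts hold.
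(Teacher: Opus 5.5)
Your proposal is correct and follows the paper's argument. Well-definedness comes from Lemma~\ref{lem:distinguishedexp} and Proposition~\ref{prop:absentcoordinatefibers}, the first/interior/last bookkeeping comes from Lemma~\ref{lem:exponentpairtransition}, and the endpoint table then yields the stated lists. Your additional checks make explicit what the paper leaves implicit: inverse pairs over distinct fibers are disjoint, $w_0\neq q-1$ so no row of the table places $\{\pm E\}$ at both ends of a chain, and representative choices and sign switches do not change domains or ranges.
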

\begin{proof}
The endpoints in Construction~\ref{con:coordinatefiberchains} are distinct by Lemma~\ref{lem:distinguishedexp} and present at the required fibers by Proposition~\ref{prop:absentcoordinatefibers}, showing the construction is well-defined at every coordinate fiber. By Lemma~\ref{lem:exponentpairtransition}, the inverse pairs belonging to the first exponent pair in each chain occur once in a domain and in no range, those belonging to the last occur once in a range and in no domain, and those belonging to every interior exponent pair occur once in each. The endpoint choices in Construction~\ref{con:coordinatefiberchains} give the stated lists.
\end{proof}

\subsection{The \texorpdfstring{$K$-to-$N$}{K-to-N} transition quadruples}

Proposition~\ref{prop:coordinatefiberchainpairs} leaves the two inverse
pairs belonging to the $K$-exponent pair at each fiber
$w=1,\ldots,q-2$ to occur in a domain, and the two inverse pairs
belonging to the $N$-exponent pair at each fiber other than $w_0$
to occur in a range. These two collections cannot be matched within the
same coordinate fibers. The $N$-exponent pair is absent at $w_0$,
while the $K$-exponent pair is absent at $q-1$. At every other fiber, a canonical transition from the final
$K$-exponent pair back to the initial $N$-exponent pair would keep
every assignment within that coordinate fiber. Repeatedly applying
$\rho$ to any element there would therefore never reach an element of
another fiber. Since $\rho$ must eventually be a single cycle, we
instead use these remaining transitions to connect different
coordinate-fiber chains. 

For each $w=1,\ldots,q-2$, the $K$-exponent pairs unused in any domain so far consist of the inverse
pairs $[X_K(w)]$ and $[X_K(-w)]$. Figure~\ref{fig:kntransitions} illustrates a new type of rotation quadruple, which we call a $K$-\textit{to}-$N$ \textit{transition quadruple}. The purpose of these rotation quadruples is to pair the $K$-exponent pairs unused in any domain with the $N$-exponent pairs unused in any range.

\begin{figure}[ht]
\centering
\begin{tikzpicture}[
  >=Latex,
  fiber/.style={font=\small},
  pair/.style={
    draw=black,
    line width=.4pt,
    minimum width=2cm,
    minimum height=.62cm,
    inner sep=2pt,
    font=\small
  },
  qbox/.style={
    draw=black,
    line width=.85pt,
    minimum width=1.45cm,
    minimum height=.78cm,
    inner sep=2pt,
    font=\small
  },
  role/.style={font=\scriptsize\itshape},
  chain/.style={
    ->,
    draw=black,
    line width=.45pt,
    shorten <=2pt,
    shorten >=2pt
  },
  kn/.style={
    ->,
    draw=black,
    line width=.85pt,
    shorten <=3pt,
    shorten >=3pt
  },
  annotation/.style={
    font=\scriptsize,
    inner sep=1.5pt
  }
]

\node[fiber] at (-0.5,2.25) {fiber $\{\pm w\}$};
\node (wdots) at (-.55,1.45) {$\vdots$};
\node[pair] (K) at (-.55,.2) {$\{K,-K\}$};
\draw[chain] (wdots.south)--(K.north);
\node[role,anchor=east] at ([xshift=-4pt]K.west) {last exponent pair};

\node[qbox] (Q) at (4.35,2.4) {$Q$};
\draw[kn] (K.east)--(Q.west)
  node[annotation,sloped,above=3pt,pos=.5] {domain inverse pairs}
  node[annotation,sloped,below=3pt,pos=.47] {$[X_K(w)],\ [X_K(-w)]$};

\node[pair] (Ny) at (4.35,-.6) {$\{N,-N\}$};
\draw[kn] (Q.south)--(Ny.north);
\node[annotation,anchor=west,align=left] at (4.5,1.1)
  {range inverse pair\\%
   $[X_N(y)]$};
\node[fiber,anchor=east] at (4.2,.16) {fiber $\{\pm y\}$};
\node (ydots) at (4.35,-1.7) {$\vdots$};
\draw[chain] (Ny.south)--(ydots.north);

\node[pair] (Nz) at (7.65,-.6) {$\{N,-N\}$};
\draw[kn] (Q.east)-|(Nz.north);
\node[annotation,anchor=west,align=left] at (7.8,1.1)
  {range inverse pair\\%
   $[X_N(z)]$};
\node[fiber,anchor=east] at (7.5,.16) {fiber $\{\pm z\}$};
\node (zdots) at (7.65,-1.7) {$\vdots$};
\draw[chain] (Nz.south)--(zdots.north);

\node[role,anchor=west] at ([xshift=4pt]Nz.east) {first exponent pair};

\end{tikzpicture}
\caption{A $K$-to-$N$ transition connecting coordinate-fiber chains. The $K$-exponent pair is last in the chain over $\{\pm w\}$, while the $N$-exponent pair is first in the chains over $\{\pm y\}$ and $\{\pm z\}$.}
\label{fig:kntransitions}
\end{figure}

\newpage

\begin{lemma}[$K$-to-$N$ transition quadruples]\label{lem:transition-closure}
Let $w\in \{1, \dots, q-2\}$ and $y,z\in \mathbb{F}_h$. The rotation quadruple
$$
  Q\bigl(\X_K(w),\X_K(-w);\X_N(y),\X_N(z)\bigr)
$$
satisfies its closure equation if and only if $y=w-2z$.
\end{lemma}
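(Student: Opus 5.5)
The plan is to turn the closure equation of Definition~\ref{def:rotationquadruple} into a single linear equation over $\F_h$, using the affine coordinates of Definition~\ref{def:changeofcoord} together with the value $u^K=-\tfrac12$ from Lemma~\ref{lem:distinguishedexp}.

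\textbf{Identifying $s_0,\dots,s_3$.} In the labeling $Q(s_1,s_3^{-1};s_2,s_0^{-1})$ we have
$$s_1=X_K(w),\qquad s_3^{-1}=X_K(-w),\qquad s_2=X_N(y),\qquad s_0^{-1}=X_N(z).$$
The inverse formula of Proposition~\ref{prop:excludedcoord} gives $s_3=X_{-K}(w)$. Since $K,N\neq0$, the inverse formula applies to every element here. The closure equation $s_0s_1s_2s_3=1_\Gamma$ is equivalent to
$$X_K(w)\,X_N(y)\,X_{-K}(w)=X_N(z).$$

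\textbf{Comparing components.} Composition in $\Gamma$ adds exponents, so both sides have exponent $K+N-K=N$. The exponent components therefore agree automatically. The translation components agree exactly when
$$\pi_K(w)+u^K\pi_N(y)+u^{K+N}\pi_{-K}(w)=\pi_N(z).$$
This is the same product rule used in the proof of Lemma~\ref{lem:exponentpairtransition}.

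\textbf{Substituting $u^K$.} Put $u^K=-\tfrac12$, so $u^{-K}=-2$. This gives
$$\pi_K(w)=\tfrac34+\tfrac w2,\qquad \pi_{-K}(w)=\tfrac32-w.$$
The value $u^N$ is kept as a free parameter. After collecting the constant terms and the $u^N$ terms, the left side minus the right side should factor as
$$(1+u^N)\Bigl(\tfrac{w-y}{2}-z\Bigr).$$

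\textbf{Conclusion.} Because $u^N$ is a square and $-1$ is a non-square in $\F_h$, we have $1+u^N\neq0$, as noted in the proof of Proposition~\ref{prop:excludedcoord}. Hence closure holds if and only if $\tfrac{w-y}{2}=z$, that is, $y=w-2z$. The argument never uses the specific value of $u^N$, so both residue classes of $h$ modulo $3$ are handled at once.

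The only real obstacle is the bookkeeping in the product formula. The step to check most carefully is that the constant terms cancel to the same $\tfrac12$ on both sides and that the $u^N$ terms carry a matching $-\tfrac12$, so that the difference factors through $1+u^N$.
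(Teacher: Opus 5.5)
Your proposal is correct and follows the paper's proof essentially verbatim. You reduce closure to $X_K(w)X_N(y)X_{-K}(w)=X_N(z)$, compare first coordinates using $u^K=-\tfrac12$, obtain the difference $(1+u^N)\bigl(\tfrac{w-y}{2}-z\bigr)=\tfrac{1+u^N}{2}(w-y-2z)$, and conclude from $1+u^N\neq0$. The bookkeeping you flagged also checks out, since both sides have constant part $\tfrac{1-u^N}{2}$.
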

\begin{proof}
The closure equation is equivalent to
$
\X_K(w)\X_N(y)\X_{-K}(w)=\X_N(z).
$
Both sides have exponent $N$, and, since $u^K=-1/2$, their first coordinates differ by
$$
\pi_K(w)+u^K\pi_N(y)+u^{K+N}\pi_{-K}(w)-\pi_N(z)
=\frac{1+u^N}{2}(w-y-2z).
$$
Since $1+u^N\neq0$, this vanishes if and only if $y=w-2z$.
\end{proof}

Therefore for each fixed $w$, we may choose $z_w$ and set $y_w=w-2z_w$. We choose in such a way that each $N$-inverse pair that occurs in no range of the coordinate-fiber chains occurs in exactly one new $K$-to-$N$ transition quadruples range. For most $w$, we use $y_w=-w-2$ and hence $z_w=w+1$, making only the following changes at the listed exceptional values of $w$.

\begin{construction}[$K$-to-$N$ transitions]
\label{con:transitions}
Suppose $q\geq21$ if $u^N=-1/3$, and $q\geq29$ if
$u^N=-2/3$. For $w=1,\ldots,q-2$, set $z_w=w+1$ unless $w$
appears in the applicable row below.
$$
\displayfontsmall\begin{array}{c@{\qquad}c}
u^N=-\dfrac13&
\begin{array}{c|ccccc}
w&1&3&q-9&q-4&q-2\\ \hline
z_w&q-1&q+3&-3&2&-5
\end{array}\\[6pt]
u^N=-\dfrac23&
\begin{array}{c|ccccccc}
w&1&4&q-10&q-5&q-4&q-3&q-2\\ \hline
z_w&-2&1&q-4&-1&-3&-6&-(q-3)
\end{array}
\end{array}
$$
Set $y_w=w-2z_w$. For each $w=1,\ldots,q-2$, append
$$
Q\bigl(X_K(w),X_K(-w);X_N(y_w),X_N(z_w)\bigr)
$$
to the partial admissible family.
\end{construction}

\begin{proposition}\label{prop:transition-coverage}
Every quadruple in Construction~\ref{con:transitions} is admissible. The
quadruple indexed by $w$ has domain inverse pairs
$[X_K(w)]$ and $[X_K(-w)]$. Together their ranges contain every inverse pair belonging to the $N$-exponent pair at a nonzero coordinate fiber other than $w_0$ exactly once.
\end{proposition}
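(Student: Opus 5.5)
The plan is to verify admissibility and coverage separately, with the coverage claim turned into a bookkeeping statement about which values $y\in\F_h$ occur.

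\textbf{Admissibility.} Fix $w\in\{1,\ldots,q-2\}$.
\begin{itemize}
\item \emph{Closure.} Since $y_w=w-2z_w$ by construction, Lemma~\ref{lem:transition-closure} gives the closure equation directly.
\item \emph{Domain.} By Proposition~\ref{prop:absentcoordinatefibers}, the $K$-pair is absent only at the fiber $q-1$, so it is present at every $w\le q-2$. Hence $X_{\pm K}(\pm w)\subseteq S$. These four elements are distinct because $K\ne -K$ (as $q$ is odd), $w\ne -w$, and Proposition~\ref{prop:excludedcoord} gives injectivity.
\item \emph{Range.} By the inverse formula, the range is $\{X_N(y_w),X_{-N}(-y_w),X_N(z_w),X_{-N}(-z_w)\}$.
  \begin{itemize}
  \item It lies in $S$ iff neither $y_w$ nor $z_w$ lies in $\{\pm\tfrac12,\pm v_N\}$. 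I will also require $y_w,z_w\neq 0$, since zero-coordinate elements are handled separately.
  \item It has four elements iff $y_w\ne z_w$. Elements with exponent $N$ and with exponent $-N$ cannot coincide.
  \end{itemize}
\end{itemize}
All of these non-membership conditions will fall out of the coverage computation below.

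\textbf{Reformulating coverage.} The inverse pairs $[X_N(y)]$ and $[X_N(y')]$ coincide iff $y=y'$. So the coverage claim says the following. Let
$$T=\F_h\setminus\{0,\pm\tfrac12,\pm v_N\}=\F_h\setminus\{0,\pm q,\pm w_0\}.$$
Then the multiset $\{y_w,z_w : 1\le w\le q-2\}$ must equal $T$, each element once. Both sides have $2(q-2)$ elements, so it suffices to show that every element of $T$ is hit. Together with the count, this also forces $y_w\ne z_w$ and keeps every value away from the excluded set.

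\textbf{Generic values.} For $z_w=w+1$ and $y_w=-w-2$, as $w$ runs over $1,\ldots,q-2$ the $z$-values fill $\{2,\ldots,q-1\}$ and the $y$-values fill $\{-3,\ldots,-q\}$. Here $-q=\tfrac12$ is forbidden, and $1,-1,-2$ are missed. Moreover, $\pm w_0$ must be avoided: $w_0=1$ in the first case, and $w_0=q-2$ in the second.

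\textbf{Exceptional rows, case by case.}
\begin{itemize}
\item \emph{Case $u^N=-1/3$.} Removing the generic outputs of $w\in\{1,3,q-9,q-4,q-2\}$ deletes the ten values
$$\{2,-3\},\ \{4,-5\},\ \{q-8,-(q-7)\},\ \{q-3,-(q-2)\},\ \{q-1,-q\}.$$
The new pairs $(y_w,z_w)$ are computed from $y_w=w-2z_w$ and reduced into $\{-q,\ldots,q\}$ using $2q\equiv -1$. For instance, at $w=1$ we get $z=q-1$ and $y=1-2q+2=4$. I would then check that the ten new values equal the ten deleted values, with $\tfrac12$ and $\pm1$ replaced by the missed admissible values $-2$ and the appropriate others.
\item \emph{Case $u^N=-2/3$.} Do the same for the seven exceptional $w$. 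The excluded values are now $\pm(q-2)=\pm w_0$ and $\pm q$.
\end{itemize}

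\textbf{Where the hypotheses enter.} The hypotheses $q\ge21$ and $q\ge29$ are used only here, in two ways:
\begin{itemize}
\item they make the exceptional $w$ distinct and lying in $[1,q-2]$;
\item they ensure the listed residues, such as $q-8$, $q+3$ and $-(q-3)$, are pairwise distinct and do not coincide with small constants, so no accidental coincidences occur.
\end{itemize}

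\textbf{Expected obstacle.} This finite residue check is the main difficulty. Reducing expressions like $y=3-2(q+3)\equiv -2$ correctly modulo $h$ is error-prone. I would tabulate each exceptional row explicitly, with deleted values, new values and normalized representatives, and confirm that the symmetric difference is exactly the intended swap.
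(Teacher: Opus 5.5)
Your proposal follows the paper's proof essentially step for step: closure from Lemma~\ref{lem:transition-closure}, the generic coordinates $\{2,\ldots,q-1\}\cup\{-3,\ldots,-q\}$, and a check that the exceptional rows only permute these values while exchanging the forbidden ones for the missed admissible ones, after which admissibility follows from distinctness and membership in $S$. One correction to the outcome you anticipate: when $u^N=-1/3$ the values $\pm1=\pm w_0$ never arise generically, so the ten new values are exactly the ten deleted ones with $-q$ replaced by $-2$ (for instance $w=3$ gives $(y_w,z_w)=(-2,-(q-2))$), whereas the exchange $\{-q,\pm(q-2)\}\to\{-2,\pm1\}$ belongs to the case $u^N=-2/3$.
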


\begin{proof}
Since $y_w=w-2z_w$, every quadruple satisfies its closure equation by
Lemma~\ref{lem:transition-closure}. By Proposition~\ref{prop:coordinatefiberchainpairs}, the unused
$N$-range inverse pairs are
$$
 [\X_N(t)],\ [\X_N(-t)]
 \qquad
 \left(t\in\{1,\ldots,q-1\}\setminus\{w_0\}\right).
$$
Lemma~\ref{lem:distinguishedexp} and
Proposition~\ref{prop:absentcoordinatefibers} give $w_0=1$ when
$u^N=-1/3$ and $w_0=q-2$ when $u^N=-2/3$. Thus their coordinate
indices are, respectively,
$$
 \{\pm2,\ldots,\pm(q-1)\},
 \qquad
 \{\pm1,\ldots,\pm(q-1)\}\setminus\{\pm(q-2)\}.
$$
Choosing $(y_w,z_w)=(-w-2,w+1)$ for every
$w=1,\ldots,q-2$ would use
$$
 \{2,\ldots,q-1\}\cup\{-3,\ldots,-q\}.
$$
In the first case, the exceptional choices permute these coordinates while
replacing $-q$ by $-2$. In the second, they replace
$\{-q,\pm(q-2)\}$ by $\{-2,\pm1\}$ and permute the rest. Hence the
range coordinates are distinct and form exactly the required set in either
case. Since $N\neq -N$ and $t\mapsto\X_N(t)$ is injective, every required
$N$-inverse pair occurs exactly once. The domain pairs are $[\X_K(w)]$ and $[\X_K(-w)]$, which are precisely
the uncovered $K$-pairs as $w$ ranges from $1$ to $q-2$.
The displayed domain and range pairs are distinct and belong to $S$, and
inversion is fixed-point-free on $S$ by
Proposition~\ref{prop:connectionset}. Thus every domain and range has four
elements, so every quadruple is admissible.
\end{proof}

This leaves the two inverse pairs of the $E$-exponent pair over the coordinate fiber $q-1$ to still be used in the domain of some rotation quadruple, and the two inverse pairs of the $E$-exponent pair at $w_0$ to be used in the range of some rotation quadruple. This will be handled later. We now turn to the elements of $S$ that are the translation maps and those over the zero-coordinate fiber. 

\subsection{The translation maps and zero-coordinate fiber}

For the construction involving the translation maps and zero-coordinate fiber elements of $S$, it is helpful to use that both lie in abelian subgroups of $\Gamma$. The following lemma exploits this to build a general construction that uses every inverse pair of either collection exactly once in a domain and exactly once in a range.

\begin{lemma}[Abelian construction]\label{lem:abelianconstruction}
\label{lem:independent-abelian-choices}
Suppose $A$ is an abelian subgroup of $\Gamma$ and let $U_i, V_i\in A\cap S$ for $i=0, \dots , m-1$, with indices modulo $m$. Suppose that the $2m$ inverse pairs $[U_i], [V_i]$ are pairwise distinct for all $i$, and that there exists $\delta \in A$ such that $U_i \delta=V_i$ for every $i$. Then both rotation quadruples
$$Q(U_i,V_i;V_{i+1},U_{i+1}) \quad\text{and}\quad Q(U_i, V_i;U_{i+1}^{-1},V_{i+1}^{-1})$$
are admissible for every $i$. Choosing either quadruple independently at each index gives a family in which every inverse pair $[U_i]$ and $[V_i]$ occurs exactly once in a domain and exactly once in a range.
\end{lemma}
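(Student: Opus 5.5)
The plan is to check the closure equation of each quadruple directly using commutativity in $A$, then get the four-element conditions from the distinctness hypothesis, and finally count domains and ranges as $i$ runs over $\Z/m\Z$.

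\textbf{Closure equations.} Recall that $Q(s_1,s_3^{-1};s_2,s_0^{-1})$ is admissible exactly when $s_0s_1s_2s_3=1_\Gamma$ and its domain and range each have four elements.

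For the first quadruple $Q(U_i,V_i;V_{i+1},U_{i+1})$ we read off $s_1=U_i$, $s_3=V_i^{-1}$, $s_2=V_{i+1}$ and $s_0=U_{i+1}^{-1}$. The closure word is therefore $U_{i+1}^{-1}U_iV_{i+1}V_i^{-1}$. All four letters lie in the abelian group $A$, so we may regroup it as $(U_iV_i^{-1})(V_{i+1}U_{i+1}^{-1})$. The hypothesis $U_j\delta=V_j$ gives $U_iV_i^{-1}=\delta^{-1}$ and $V_{i+1}U_{i+1}^{-1}=\delta$. Hence the product is $1_\Gamma$.

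For the second quadruple $Q(U_i,V_i;U_{i+1}^{-1},V_{i+1}^{-1})$ we have the same $s_1,s_3$, now with $s_2=U_{i+1}^{-1}$ and $s_0=V_{i+1}$. The closure word $V_{i+1}U_iU_{i+1}^{-1}V_i^{-1}$ regroups in $A$ to $(V_{i+1}U_{i+1}^{-1})(U_iV_i^{-1})=\delta\delta^{-1}=1_\Gamma$. The indices are taken modulo $m$, so these computations hold equally at $i=m-1$.

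\textbf{Four-element conditions.} Both quadruples have domain $[U_i]\cup[V_i]$ and range $[U_{i+1}]\cup[V_{i+1}]$. The range equals this set because $\{U_{i+1}^{-1},V_{i+1}^{-1}\}$ and $\{V_{i+1},U_{i+1}\}$ generate the same two inverse pairs. Each inverse pair has two elements, since inversion is fixed-point-free on $S$ by Proposition~\ref{prop:connectionset}. By hypothesis $[U_i]\ne[V_i]$, and distinct inverse pairs are disjoint because the inverse pairs partition $S$. So the domain and range each have four elements, and both quadruples are admissible.

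\textbf{Coverage.} Choose either quadruple independently at each index. The quadruple at index $i$ has domain inverse pairs $[U_i],[V_i]$ and range inverse pairs $[U_{i+1}],[V_{i+1}]$, regardless of which option was chosen. As $i$ runs over $\Z/m\Z$, the map $i\mapsto i+1$ is a bijection. Since the $2m$ inverse pairs are pairwise distinct, each $[U_j]$ and each $[V_j]$ then occurs in exactly one domain (at index $j$) and exactly one range (at index $j-1$).

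There is no real obstacle here; the only point needing care is reading $s_0,\dots,s_3$ off the labeling convention of Definition~\ref{def:rotationquadruple} correctly.
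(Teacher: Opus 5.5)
Your proof is correct and takes essentially the same approach as the paper. You verify both closure equations by commutativity in $A$ using $V_j=U_j\delta$, and you show that each quadruple has domain $[U_i]\cup[V_i]$ and range $[U_{i+1}]\cup[V_{i+1}]$, each with four elements by fixed-point-free inversion and the distinctness hypothesis; the shift $i\mapsto i+1$ then gives the once-in-a-domain, once-in-a-range count.
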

\begin{proof}
Since $A$ is abelian and $V_i=\delta U_i$,
$$
\begin{aligned}
 U_iV_{i+1}V_i^{-1}
 &=U_i(\delta U_{i+1})(\delta U_i)^{-1}
 =U_{i+1},\\
 U_iU_{i+1}^{-1}V_i^{-1}
 &=U_iU_{i+1}^{-1}(\delta U_i)^{-1}
 =V_{i+1}^{-1}.
\end{aligned}
$$
These are the closure equations for the two displayed quadruples. Their domain inverse pairs are $[U_i]$ and $[V_i]$. Their range
inverse pairs are $[U_{i+1}]$ and $[V_{i+1}]$, since taking inverses
does not change an inverse pair. These pairs are distinct by hypothesis,
and inversion is fixed-point-free on $S$ by
Proposition~\ref{prop:connectionset}. Thus both quadruples are admissible. The choice at index $i$ does not change its domain or range inverse pairs.
Hence, choosing independently at every index, each inverse pair occurs
exactly once in a domain and exactly once in a range.
\end{proof}

We now apply Lemma~\ref{lem:abelianconstruction} separately to the translation maps and the zero-coordinate elements, starting with the translation maps. For $c\in\F_h$, write $T(c)=(c,0)$, so that $X_0(w)=T(2w)$. The translations form an abelian subgroup of $\Gamma$, with $T(a)T(b)=T(a+b)$ and $T(c)^{-1}=T(-c)$. By Definition~\ref{def:connectionset}, $T(c)\in S$ if and only if $c\notin\{0,\pm1\}$. Since $h=2q+1$, each pair $\{c,-c\}$ with $c\notin\{0,\pm1\}$ has a unique representative in $\{2,\ldots,q\}$. Therefore
$$
[T(2)],\ldots,[T(q)]
$$
are pairwise distinct and comprise all translation inverse pairs in $S$.

\begin{construction}[Translation quadruples]\label{con:translationquadruples}
Since $q=2r+1$, for $j=1,\ldots,r$, consider the two translation elements $T(j+1)$ and $T(r+j+1)$. As $j$ varies, their inverse pairs are precisely the translation inverse pairs displayed above, and $T(j+1)T(r)=T(r+j+1)$ for every $j$. Fix a cyclic order on $\{1,\ldots,r\}$, let $j'$ be the successor of $j$, and at each index append one of the two quadruples
$$
\begin{gathered}
Q\bigl(T(j+1),T(r+j+1);T(r+j'+1),T(j'+1)\bigr),\\
Q\bigl(T(j+1),T(r+j+1);T(j'+1)^{-1},T(r+j'+1)^{-1}\bigr)
\end{gathered}
$$
of Lemma~\ref{lem:abelianconstruction}, taking $T(j+1)$ and $T(r+j+1)$ as $U_i$ and $V_i$, respectively, and $\delta=T(r)$. We call these the \textit{translation quadruples}. We call
$$
T(j+1),\quad T(j+1)^{-1},\quad T(r+j+1),\quad T(r+j+1)^{-1}
$$
the four translation elements at the index $j$. The cyclic order and the choice at each index are made in Section~\ref{sec:completingthefamily}.
\end{construction}

We now apply Lemma~\ref{lem:abelianconstruction} to the zero-coordinate elements. For $k,m\in\Z_q$, the zero-coordinate elements satisfy $X_k(0)X_m(0)=X_{k+m}(0)$ and $X_k(0)^{-1}=X_{-k}(0)$, so the elements $X_k(0)$ form an abelian subgroup of $\Gamma$, and Proposition~\ref{prop:excludedcoord} shows that its nonidentity elements belong to $S$. Since $h\equiv3\mod8$, we may write $q=4M+1$ and $r=2M$. The integers $1,\ldots,2M$ represent the nonzero exponent pairs, so $[X_{1}(0)],\ldots,[X_{2M}(0)]$ are pairwise distinct and comprise all $2M$ nonidentity inverse pairs in the zero-coordinate fiber.

\begin{construction}[Zero-coordinate quadruples]\label{con:zerocoordinatequadruples}
For each $i=0,\ldots,M-1$, choose $\ell_i\in\{2i+1,-(2i+2)\}$. We use $X_{\ell_i}(0)$ and $X_{\ell_i+1}(0)$ as representatives of the inverse pairs $[X_{2i+1}(0)]$ and $[X_{2i+2}(0)]$. Choosing $\ell_i=2i+1$ gives $X_{2i+1}(0)$ and $X_{2i+2}(0)$, while choosing $\ell_i=-(2i+2)$ gives their inverses in reverse order. In either case, $X_{\ell_i}(0)X_1(0)=X_{\ell_i+1}(0)$. As $i$ varies, their inverse pairs are precisely the inverse pairs displayed above. Fix a cyclic order on $\{0,\ldots,M-1\}$, let $i'$ be the successor of $i$, and at each index append one of the two quadruples
$$
\begin{gathered}
Q\bigl(X_{\ell_i}(0),X_{\ell_i+1}(0);X_{\ell_{i'}+1}(0),X_{\ell_{i'}}(0)\bigr),\\
Q\bigl(X_{\ell_i}(0),X_{\ell_i+1}(0);X_{\ell_{i'}}(0)^{-1},X_{\ell_{i'}+1}(0)^{-1}\bigr)
\end{gathered}
$$
of Lemma~\ref{lem:abelianconstruction}, taking $X_{\ell_i}(0)$ and $X_{\ell_i+1}(0)$ as $U_i$ and $V_i$, respectively, and $\delta=X_1(0)$. We call these the \textit{zero-coordinate quadruples}. We call
$$
X_{\ell_i}(0),\quad X_{\ell_i}(0)^{-1},\quad X_{\ell_i+1}(0),\quad X_{\ell_i+1}(0)^{-1}
$$
the four zero-coordinate elements at the index $i$. The representatives, the cyclic order, and the quadruple at each index are again chosen in Section~\ref{sec:completingthefamily}.
\end{construction}

\begin{proposition}\label{prop:partial-family}
Under the hypotheses of Construction~\ref{con:transitions}, the quadruples in Constructions~\ref{con:coordinatefiberchains}, \ref{con:transitions}, \ref{con:translationquadruples}, and \ref{con:zerocoordinatequadruples} form a partial admissible family with pairwise-disjoint domains and pairwise-disjoint ranges. The only inverse pairs of $S$ not used in any domain are $[X_E(q-1)]$ and $[X_E(-(q-1))]$ and not in any range are $[X_E(w_0)]$ and $[X_E(-w_0)]$ exactly.

\end{proposition}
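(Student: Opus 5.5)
The plan is to partition $S$ into four blocks and show that each block is covered disjointly by one of the constructions. The blocks are: the translation elements $T(c)$ with $c\notin\{0,\pm1\}$; the nonidentity zero-coordinate elements $X_k(0)$ with $k\neq0$; the elements $X_k(w)$ with $k\neq0$ and $w$ in a nonzero coordinate fiber at which $\{\pm k\}$ is present; and nothing else. To justify this partition, I use Proposition~\ref{prop:excludedcoord}. It shows that for $k\neq0$ every element with exponent $k$ is $X_k(w)$ for a unique $w$. The totally excluded fiber $\{\pm1/2\}$ contributes nothing, and on a nonzero fiber the four elements $X_{\pm k}(\pm w)$ lie in $S$ exactly when $\{\pm k\}$ is present there. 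The exponent-zero elements of $S$ are exactly the translations. The three blocks are disjoint by exponent and coordinate, and each is a union of inverse pairs, because inversion sends $X_k(w)$ to $X_{-k}(-w)$ and $T(c)$ to $T(-c)$. Since every domain and range is a union of two inverse pairs, it suffices to track inverse pairs, as remarked at the start of Section~\ref{sec:affine-coordinates}.

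Admissibility of each quadruple is already established. Lemma~\ref{lem:exponentpairtransition} and Definition~\ref{def:signswitch} cover the chain quadruples, Proposition~\ref{prop:transition-coverage} covers the $K$-to-$N$ quadruples, and Lemma~\ref{lem:abelianconstruction} covers the translation and zero-coordinate quadruples. For the last two, I still need to check the hypotheses of Lemma~\ref{lem:abelianconstruction}.
\begin{itemize}
\item For translations, the $2r$ pairs $[T(j+1)]$ and $[T(r+j+1)]$ are exactly $[T(2)],\dots,[T(q)]$, so they are distinct. They lie in $S$, and $\delta=T(r)$ lies in the abelian translation subgroup.
\item For the zero-coordinate fiber, the pairs $[X_{\ell_i}(0)]$ and $[X_{\ell_i+1}(0)]$ are exactly $[X_{2i+1}(0)]$ and $[X_{2i+2}(0)]$, so they enumerate $[X_1(0)],\dots,[X_{2M}(0)]$ once each. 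The relation $X_{\ell_i}(0)X_1(0)=X_{\ell_i+1}(0)$ holds in either choice of $\ell_i$.
\end{itemize}
Lemma~\ref{lem:abelianconstruction} then gives that every inverse pair in these two blocks occurs exactly once in a domain and once in a range. This holds regardless of the cyclic orders and choices left open, because none of them affects domain or range inverse pairs.

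For the third block, I combine two earlier results. Proposition~\ref{prop:coordinatefiberchainpairs} says that the chain quadruples use each such inverse pair at most once as a domain and at most once as a range. The only pairs missing from domains are $[X_K(\pm w)]$ for $1\le w\le q-2$ and $[X_E(\pm(q-1))]$. The only pairs missing from ranges are $[X_N(\pm w)]$ for $w\neq w_0$ and $[X_E(\pm w_0)]$. Sign switches and the freedom in interior order change none of this, since a sign switch preserves the domain and range (Definition~\ref{def:signswitch}). Proposition~\ref{prop:transition-coverage} then says that the $K$-to-$N$ quadruples have domains exactly the pairs $[X_K(\pm w)]$, $w=1,\dots,q-2$, each once. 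It also says their ranges contain every $N$-pair at nonzero fibers other than $w_0$ exactly once. These pairs are disjoint from the pairs already used as chain domains and chain ranges respectively, because they were exactly the unused ones.

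The main obstacle is the bookkeeping in this last step. I must be sure that the $K$-to-$N$ ranges never hit a pair already occurring in a chain range, and never hit $[X_E(\pm w_0)]$. For the first, the $K$-to-$N$ ranges are $N$-pairs, and since $N$ is first in the chain at each fiber $w\neq w_0$, those pairs appear in no chain range. For the second, $N\neq\pm E$ by Lemma~\ref{lem:distinguishedexp}, and the hypothesis $q\ge21$ or $q\ge29$ gives $h\ge19$. One more point needs checking: the $K$-to-$N$ range coordinates listed in Proposition~\ref{prop:transition-coverage} avoid $\pm w_0$ and $\pm q$. This is part of that proposition's conclusion, but I will confirm it explicitly, since it excludes absent fibers. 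Taking the union over the blocks, every inverse pair of $S$ is used at most once as a domain and at most once as a range. The only exceptions are $[X_E(\pm(q-1))]$, absent from domains, and $[X_E(\pm w_0)]$, absent from ranges. This gives the pairwise disjointness and the exact list of unused pairs.
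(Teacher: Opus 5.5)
Your proposal is correct and follows the same route as the paper. Proposition~\ref{prop:coordinatefiberchainpairs} and Proposition~\ref{prop:transition-coverage} handle the nonzero-exponent, nonzero-coordinate inverse pairs, Lemma~\ref{lem:abelianconstruction} handles the translation and zero-coordinate pairs, and you then observe that these three collections are disjoint. You additionally spell out the partition of $S$ and the hypothesis checks for Lemma~\ref{lem:abelianconstruction}, which the paper leaves implicit.
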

\begin{proof}
Proposition~\ref{prop:coordinatefiberchainpairs} determines the domain and range inverse pairs left unused by the coordinate-fiber chains. Proposition~\ref{prop:transition-coverage} shows that the $K$-to-$N$ transitions are admissible and use exactly the unused $K$-domain and $N$-range inverse pairs. Thus, among the inverse pairs with nonzero exponent and nonzero coordinate, only the four displayed $E$-inverse pairs remain. Constructions~\ref{con:translationquadruples} and \ref{con:zerocoordinatequadruples} use every translation inverse pair and every non-identity zero-coordinate inverse pair exactly once in both a domain and a range. These collections are disjoint from each other and from the nonzero-coordinate inverse pairs. Therefore the domains of the combined family are pairwise disjoint, as are its ranges.
\end{proof}

\section{Completing the family}\label{sec:completingthefamily}

Proposition~\ref{prop:partial-family} shows there are two domain inverse pairs and two range inverse pairs left unused by our construction. An immediate thought would be to pair them and attempt to obtain a complete admissible family in that way. However, to apply Lemma~\ref{lem:completefamilyquad} the assembled permutation $\rho$ on $S$ of our complete admissible family must also be a single $d$-cycle to yield an orientable quadrangulation. The translation quadruples map the translation elements among themselves, and the zero-coordinate quadruples do the same for the zero-coordinate elements. Each collection is therefore a union of cycles of $\rho$, whatever the remaining choices, and both must be opened and joined to the rest of $S$. We do this with two replacements. The first joins the translation quadruples to the four unused $E$-inverse pairs and makes the family complete. The second joins the zero-coordinate quadruples to the coordinate-fiber chain over the fiber $1$. We assume from here onward that $h\geq131$. The seven primes $h<131$ are handled separately in Section~\ref{sec:exceptional}.

\subsection{The translation quadruple replacements}\label{subsec:translationreplacements}

Let $\ell=M+1$ if $h\equiv 1\mod 3$ and $\ell=5$ otherwise. As $M\geq 16$, each of $1$, $\ell-3$, $\ell-1$, and $\ell+1$ are distinct elements of $\{1,\ldots,r\}$. In $\F_h$ we have $q-1=-3/2$ and $8(M+1)=h+5=5$, so $\ell=5/8$ when $h\equiv1\pmod3$, and $w_0=q-2=-5/2$ when $h\equiv2\pmod3$. In both cases
\begin{equation}\label{eq:c0}
\ell=-\frac54\bigl(q-1+w_0\bigr)\quad\text{in }\F_h.
\end{equation}

\begin{lemma}[Translation quadruple replacements]\label{lem:translationreplacements}
The rotation quadruples
$$
\begin{gathered}
Q\bigl(T(\ell),T(r+\ell)^{-1};T(\ell+2)^{-1},T(r+\ell-2)\bigr),\\
Q\bigl(T(2),X_E(-(q-1));T(\ell-2),X_{-E}(-w_0)\bigr),\\
Q\bigl(T(r+2),X_E(q-1);T(r+\ell+2)^{-1},X_{-E}(w_0)\bigr)
\end{gathered}
$$
are admissible. Taken together, their domains consist of the translation elements at the indices $1$ and $\ell -1$ and the inverse pairs $[X_E(q-1)]$ and $[X_E(-(q-1))]$, and their ranges consist of the translation elements at the indices $\ell+1$ and $\ell-3$ and $[X_E(w_0)]$ and $[X_E(-w_0)]$.
\end{lemma}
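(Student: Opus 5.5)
We verify, for each of the three quadruples $Q(s_1,s_3^{-1};s_2,s_0^{-1})$, the closure equation $s_0s_1s_2s_3=1_\Gamma$, which is equivalent to $s_1 s_2 s_3 = s_0^{-1}$. Then we check that domains and ranges have four elements and identify them with the stated translation elements and $E$-inverse pairs. The identification is bookkeeping. By Construction~\ref{con:translationquadruples}, the translation elements at index $j$ are $T(j+1)^{\pm1}$ and $T(r+j+1)^{\pm1}$. So index $1$ gives $[T(2)],[T(r+2)]$; index $\ell-1$ gives $[T(\ell)],[T(r+\ell)]$; index $\ell+1$ gives $[T(\ell+2)],[T(r+\ell+2)]$; and index $\ell-3$ gives $[T(\ell-2)],[T(r+\ell-2)]$. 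Reading off the first two slots for domains and the last two slots for ranges produces exactly these, together with $[X_E(\pm(q-1))]$ and $[X_{-E}(\pm w_0)]=[X_E(\mp w_0)]$ by Proposition~\ref{prop:excludedcoord}. The distinctness $1,\ell-3,\ell-1,\ell+1$ established before the lemma makes the translation pairs distinct. The translation and $E$ elements lie in different cosets (exponent $0$ versus $\pm E$), and $X_E$ is injective with $w\ne -w$. Since inversion is fixed-point-free (Proposition~\ref{prop:connectionset}), each domain and range has four elements. Membership in $S$ follows from $T(c)\in S$ for $c\notin\{0,\pm1\}$ (the arguments $\ell,\ell\pm2,r+\ell,\dots$ avoid $0,\pm1$ since $M\ge16$) and from Proposition~\ref{prop:absentcoordinatefibers}.

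For the first quadruple all elements are translations, so closure is additive. We need $T(\ell)\,T(\ell+2)^{-1}\,T(r+\ell-2)^{-1} = T(r+\ell)$, i.e.\ $\ell-(\ell+2)-(r+\ell-2)=-(r+\ell)$. This holds identically.

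For the second and third quadruples, closure reads
\[
T(2)\,T(\ell-2)\,X_{-E}(-w_0)^{-1} = X_E(-(q-1))^{-1},
\]
that is, $T(\ell)\,X_E(w_0) = X_{-E}(q-1)$, and similarly
\[
T(r+2)\,T(r+\ell+2)^{-1}\,X_{-E}(w_0)^{-1} = X_E(q-1)^{-1},
\]
that is, $T(-\ell)\,X_E(-w_0)=X_{-E}(-(q-1))$. The second identity is the inverse-conjugate image of the first under $X_k(w)\mapsto X_{-k}(-w)$, so only the first needs checking up to a sign argument. Here I notice an exponent mismatch: the left side has exponent $E$ while the right side has exponent $-E$. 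So I must re-derive which inverse is placed where. Note that $X_{-E}(-w_0)^{-1}=X_E(w_0)$, and the slot in the closure is $s_3=X_{-E}(-w_0)$ itself (not inverted), since $s_3^{-1}$ appears in slot two of $Q(s_1,s_3^{-1};s_2,s_0^{-1})$. So with $s_1=T(2)$, $s_3^{-1}=X_E(-(q-1))$, $s_2=T(\ell-2)$ and $s_0^{-1}=X_{-E}(-w_0)$, the closure $s_1s_2s_3=s_0^{-1}$ becomes
\[
T(\ell)\,X_{-E}(q-1)=X_{-E}(-w_0).
\]
Both sides have exponent $-E$. Comparing first coordinates gives $\ell+\pi_{-E}(q-1)=\pi_{-E}(-w_0)$, i.e.\ $\ell=-(1+u^{-E})(q-1+w_0)$. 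Since $u^{-E}=1/4$, this is $\ell=-\tfrac54(q-1+w_0)$, which is exactly \eqref{eq:c0}.

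The third quadruple reduces analogously to $T(-\ell)\,X_E(-(q-1))=X_E(w_0)$. Here one must be careful that $\pi_E$ has coefficient $1+u^E=5$ rather than $5/4$, so the expected relation is $-\ell=5(w_0+q-1)$. This conflicts with \eqref{eq:c0} unless the relevant product is actually conjugated by the exponent. The main obstacle is therefore getting this third closure right. Left-multiplying by the translation $T(r+2)T(r+\ell+2)^{-1}=T(-\ell)$ and then applying the affine map $X_{E}$, the product order matters: $X_k(w)T(c)=(\pi_k(w)+u^kc,k)$. I will recompute the third closure carefully in the correct order, where the translation is multiplied on the right of an $E$-element and picks up the factor $u^{\pm E}$, and expect the factor $4$ or $1/4$ to convert $5$ into $5/4$, recovering \eqref{eq:c0}. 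If the order does not cooperate, I will use a cyclic relabeling of the quadruple, as noted after Definition~\ref{def:rotationquadruple}.
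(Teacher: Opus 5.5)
Your route is the paper's: check the three closure equations directly against \eqref{eq:c0}, then read off domains and ranges. Your bookkeeping, $S$-membership and distinctness arguments are fine. The gap is the third quadruple, whose closure you never establish.

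The equation you reduce it to, $T(-\ell)\,X_E(-(q-1))=X_E(w_0)$, is not equivalent to the closure, and it is false. Since $1+u^E=5$, it forces $-\ell=5(q-1+w_0)$, which is incompatible with \eqref{eq:c0}. Your proposed remedy (reordering factors, or a cyclic relabeling) misdiagnoses the problem. The real cause is the same slot-misreading you made and then repaired for the second quadruple. In $Q\bigl(T(r+2),X_E(q-1);T(r+\ell+2)^{-1},X_{-E}(w_0)\bigr)$ the second slot is $s_3^{-1}$, so $s_3=X_E(q-1)^{-1}=X_{-E}(-(q-1))$. The fourth slot is $s_0^{-1}=X_{-E}(w_0)$. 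The closure $s_1s_2s_3=s_0^{-1}$ therefore reads
\[
T(-\ell)\,X_{-E}(-(q-1))=X_{-E}(w_0).
\]
Both sides have exponent $-E$. Comparing first coordinates with $\pi_{-E}(w)=\tfrac38+\tfrac54w$ gives $-\ell-\tfrac54(q-1)=\tfrac54w_0$, which is \eqref{eq:c0} again. The paper handles both mixed quadruples at once via $T(c)X_{-E}(w)=X_{-E}(w+4c/5)$. Your hunch about a right-multiplied translation picking up $u^E=4$ is consistent: the equivalent form $s_0s_1s_2=s_3^{-1}$ reads $X_E(-w_0)\,T(-\ell)=X_E(q-1)$ and yields the same relation. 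But no rearrangement is needed once the slots are read correctly.

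Two smaller corrections. First, the symmetry you invoke, $X_k(w)\mapsto X_{-k}(-w)$, is inversion, an anti-automorphism, and flipping the exponent is exactly what produced your wrong third equation. The map that carries the second closure to the third is the automorphism $(c,k)\mapsto(1-u^k-c,k)$, i.e.\ conjugation by $z\mapsto 1-z$. It sends $X_k(w)\mapsto X_k(-w)$ and $T(c)\mapsto T(-c)$, so it takes $T(\ell)X_{-E}(q-1)=X_{-E}(-w_0)$ to $T(-\ell)X_{-E}(-(q-1))=X_{-E}(w_0)$; with it you could deduce the third closure from the second. Second, in the first quadruple your displayed identity should end in $T(r+\ell)^{-1}$, not $T(r+\ell)$. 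Your additive check $\ell-(\ell+2)-(r+\ell-2)=-(r+\ell)$ is the correct statement.
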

\begin{proof}
Since $u^{-E}=1/4$,
Definition~\ref{def:changeofcoord} gives
$T(c)X_{-E}(w)=X_{-E}(w+4c/5)$. Using
\eqref{eq:c0} and the inverse formula of
Proposition~\ref{prop:excludedcoord}, we have
\begin{align*}
T(\ell)T(\ell+2)^{-1}T(r+\ell)
&=T(r+\ell-2),\\
T(2)T(\ell-2)X_E(-(q-1))^{-1}
&=T(\ell)X_{-E}(q-1)=X_{-E}(-w_0),\\
T(r+2)T(r+\ell+2)^{-1}X_E(q-1)^{-1}
&=T(-\ell)X_{-E}(-(q-1))=X_{-E}(w_0).
\end{align*}
These are the closure equations for the three rotation quadruples. The indices $1$, $\ell-3$, $\ell-1$, and $\ell+1$ are distinct
elements of $\{1,\ldots,r\}$, so their translation inverse pairs
are pairwise distinct and their elements belong to $S$ by
Construction~\ref{con:translationquadruples}. The $E$-exponent pair
is present at both $q-1$ and $w_0$ by
Proposition~\ref{prop:absentcoordinatefibers}, so all the remaining
elements also belong to $S$. Since $E\neq0$, none of these is a
translation element. Thus each domain and each range contains four
distinct elements, and the three quadruples are admissible.
Their domains and ranges are as stated by
Definition~\ref{def:rotationquadruple}.
\end{proof}

\begin{construction}[Translation quadruple replacements]\label{con:translationreplacements}
Set the cyclic order of Construction~\ref{con:translationquadruples} as $1, \ell+1, \ell-1, \ell-3,$ followed by all the remaining indices in increasing order. For the index $\ell-3$ use the second quadruple of Construction~\ref{con:translationquadruples}, and the first at every other index. Additionally, remove the rotation quadruples at indices $1$ and $\ell-1$ from the partial admissible family, and append the three rotation quadruples of Lemma~\ref{lem:translationreplacements}.
\end{construction}

\begin{proposition}
The family of rotation quadruples obtained from the partial admissible family of Proposition~\ref{prop:partial-family} by Construction~\ref{con:translationreplacements} is complete.    
\end{proposition}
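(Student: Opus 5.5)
The plan is to verify completeness by inverse-pair bookkeeping. By the remark at the start of Section~\ref{sec:affine-coordinates}, an admissible family is complete exactly when each inverse pair of $S$ occurs once among its domains and once among its ranges. Admissibility of every member is already known. The untouched quadruples are admissible by Lemma~\ref{lem:exponentpairtransition}, Proposition~\ref{prop:transition-coverage} and Lemma~\ref{lem:abelianconstruction}; the last applies for any cyclic order and any choice at each index, so in particular for the one fixed in Construction~\ref{con:translationreplacements}. The three new quadruples are admissible by Lemma~\ref{lem:translationreplacements}. It therefore remains only to track which inverse pairs enter and leave the domain and range multisets under the replacement.

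Start from Proposition~\ref{prop:partial-family}. There, every inverse pair occurs exactly once in a domain and once in a range, with four exceptions. The pairs $[X_E(q-1)]$ and $[X_E(-(q-1))]$ occur in no domain, and $[X_E(w_0)]$ and $[X_E(-w_0)]$ occur in no range.

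\textbf{Removal.} Next record the effect of removing the translation quadruples at indices $1$ and $\ell-1$. By Lemma~\ref{lem:abelianconstruction}, the quadruple at index $j$ has domain equal to the translation elements at index $j$ and range equal to the translation elements at the successor index $j'$. In the cyclic order $1,\ell+1,\ell-1,\ell-3,\dots$, the successor of $1$ is $\ell+1$ and the successor of $\ell-1$ is $\ell-3$. So removal frees the translation elements at indices $1$ and $\ell-1$ from the domains, and those at $\ell+1$ and $\ell-3$ from the ranges. Here one must use that $1,\ell-3,\ell-1,\ell+1$ are distinct, which holds since $M\ge16$, so that these four index positions are genuinely distinct. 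The choice of the second quadruple at index $\ell-3$ is irrelevant here, because that choice does not change domains or ranges.

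\textbf{Appending.} Now append the three quadruples of Lemma~\ref{lem:translationreplacements}. That lemma states that their domains together consist exactly of the translation elements at indices $1$ and $\ell-1$ together with $[X_E(\pm(q-1))]$. Likewise their ranges consist exactly of the translation elements at $\ell+1$ and $\ell-3$ together with $[X_E(\pm w_0)]$. These are precisely the freed or previously missing inverse pairs, each exactly once. The three new domains are pairwise disjoint because the lemma lists twelve distinct elements; the same holds for the three new ranges.

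Hence every inverse pair occurs exactly once among the domains and exactly once among the ranges. Since each domain and range is a union of two inverse pairs, the domains partition $S$ and the ranges partition $S$, so the family is complete. The only delicate point is checking the successor structure of the cyclic order, namely that removing exactly the quadruples at $1$ and $\ell-1$ frees the ranges at $\ell+1$ and $\ell-3$. This is immediate from the chosen order.
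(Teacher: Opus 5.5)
Your proposal is correct and follows essentially the same route as the paper. Both arguments do inverse-pair bookkeeping starting from Proposition~\ref{prop:partial-family}. Both use the successor structure of the cyclic order $1,\ell+1,\ell-1,\ell-3,\dots$ to see that deleting the quadruples at indices $1$ and $\ell-1$ frees the translation pairs at those indices from the domains and those at $\ell+1,\ell-3$ from the ranges. Both then invoke Lemma~\ref{lem:translationreplacements} to show that the three appended quadruples use exactly these pairs together with the four leftover $E$-inverse pairs.
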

\begin{proof}
By Lemma~\ref{lem:abelianconstruction}, the cyclic order and choices
of translation quadruples in
Construction~\ref{con:translationreplacements} still use every
translation inverse pair exactly once in a domain and exactly once
in a range. In this cyclic order, the successors of $1$ and
$\ell-1$ are $\ell+1$ and $\ell-3$, respectively. Thus the two
deleted quadruples are
$$
\begin{gathered}
Q\bigl(T(2),T(r+2);T(r+\ell+2),T(\ell+2)\bigr),\\
Q\bigl(T(\ell),T(r+\ell);T(r+\ell-2),T(\ell-2)\bigr).
\end{gathered}
$$
Their domains consist of the translation elements at the indices
$1$ and $\ell-1$, and their ranges consist of those at the indices
$\ell+1$ and $\ell-3$. By Lemma~\ref{lem:translationreplacements}, the three appended
quadruples are admissible and use exactly these domain and range
inverse pairs, together with the two $E$-inverse pairs left unused
in each by Proposition~\ref{prop:partial-family}. Every other
inverse pair retains its occurrence in a domain and in a range.
Therefore every inverse pair of $S$ occurs exactly once in a
domain and exactly once in a range, so the family is complete.
\end{proof}

The admissible family is now complete, but the zero-coordinate quadruples still map zero-coordinate elements among themselves. We modify them so that $\rho$ can become a single cycle.

\begin{lemma}\label{lem:zerocoordinateproduct}
If $a,b\in \Z_q$ satisfy $u^a+u^b=-2$, then $X_{-a}(-1)X_b(-1)=X_{b-a}(0)$.
\end{lemma}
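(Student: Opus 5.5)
This is a direct computation in $\Gamma$, using the composition law for affine maps together with the coordinates of Definition~\ref{def:changeofcoord}. Write $(c,k)(c',k')=(c+u^kc',k+k')$; this is composition $z\mapsto u^k(u^{k'}z+c')+c$, the convention already used in the proof of Lemma~\ref{lem:exponentpairtransition}. Then
$$X_{-a}(-1)X_b(-1)=\bigl(\pi_{-a}(-1)+u^{-a}\pi_b(-1),\,b-a\bigr).$$
The exponent is already $b-a$, so it remains to show that the first coordinate equals $\pi_{b-a}(0)=\frac{1-u^{b-a}}{2}$.

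Expand each term. We have $\pi_k(-1)=\frac{1-u^k}{2}-(1+u^k)=\frac{-1-3u^k}{2}$. Hence
$$\pi_{-a}(-1)=\frac{-1-3u^{-a}}{2},\qquad u^{-a}\pi_b(-1)=\frac{-u^{-a}-3u^{b-a}}{2}.$$
Adding these gives
$$\frac{-1-4u^{-a}-3u^{b-a}}{2}.$$
We need this to equal $\frac{1-u^{b-a}}{2}$. That is equivalent to
$$-2-4u^{-a}-2u^{b-a}=0,$$
or after multiplying by $-u^a/2$,
$$u^a+2+u^b=0.$$
This is exactly the hypothesis $u^a+u^b=-2$.

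There is no real obstacle. The only care needed is in the composition convention, since the left factor's multiplier scales the right factor's translation, and in the sign of $\pi_k(-1)$.
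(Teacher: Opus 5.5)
Your proposal is correct and follows essentially the same route as the paper: expand the product in $\Gamma$ to obtain first coordinate $\frac{-1-4u^{-a}-3u^{b-a}}{2}$, then use the hypothesis to identify it with $\pi_{b-a}(0)$. The paper substitutes $u^{b-a}=-1-2u^{-a}$ forward, whereas you reduce the required identity back to $u^a+u^b=-2$; this is the same computation.
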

\begin{proof}
Since $u^b=-2-u^a$, we have $u^{b-a}=-1-2u^{-a}$.
By Definition~\ref{def:changeofcoord} and multiplication in
$\Gamma$,
\begin{align*}
X_{-a}(-1)X_b(-1)
&=\left(-\frac{1+4u^{-a}+3u^{b-a}}{2},b-a\right)\\
&=\left(\frac{1-u^{b-a}}{2},b-a\right)
=X_{b-a}(0).
\end{align*}
\end{proof}

\begin{lemma}[Existence of the required exponents]\label{lem:existenceofthereqexp}
There exist $a,b\in \Z_q$ such that $u^a+u^b=-2$, the exponent pairs $\{\pm a\}$ and $\{\pm b\}$ are distinct from each other and from the $K$-, $N$-, and $E$-exponent pairs, and the least residue $\upsilon$ of $1+a-b$ in $\{0, \dots, q-1\}$ is odd with $9\leq \upsilon\leq q-10$. Additionally, both exponent pairs are present at the fiber $1$.
\end{lemma}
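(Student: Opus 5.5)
We need $a,b$ with $u^a+u^b=-2$, both squares. We count solutions by a character-sum argument and then discard the bad ones. Write $x=u^a$, $y=u^b$. We want pairs of nonzero squares with $x+y=-2$. Parametrize by $x$: we need $\chi(x)=\chi(-2-x)=1$. The count is
$$\frac14\sum_{x\neq 0,-2}\bigl(1+\chi(x)\bigr)\bigl(1+\chi(-2-x)\bigr),$$
and the standard Jacobi-sum evaluation $\sum_x\chi(x)\chi(-2-x)=-\chi(-1)=1$ gives roughly $(h-3)/4$ solutions $(x,y)$. In fact this count is exact up to $O(1)$. So there are about $q/2$ admissible ordered pairs $(a,b)$.

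The conditions are not simply counted, because the parity and window condition on $\upsilon\equiv 1+a-b \pmod q$ is additive in exponents. That makes it a condition on $y/x=u^{b-a}$, a mixed multiplicative condition. Write $t=u^{b-a}$. Then $x+y=-2$ becomes $x(1+t)=-2$, so $x=-2/(1+t)$. We need $x$ to be a square, i.e.\ $\chi(-2(1+t))=1$, equivalently $\chi(1+t)=1$ since $\chi(-2)=1$. So the solutions correspond bijectively to $c=b-a\in\Z_q$ with $\chi(1+u^c)=1$ and $1+u^c\ne0$; the latter is automatic since $-1$ is a non-square. Then $\upsilon\equiv 1-c$. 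We therefore need an odd $\upsilon\in[9,q-10]$ with $c=1-\upsilon$ satisfying $\chi(1+u^{c})=1$.

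The next step is to bound the exclusions. We must avoid:
\begin{itemize}[label={}]
\item the constraint $\{\pm a\}=\{\pm b\}$, i.e.\ $c=0$ or $a+b=0$. The case $a=-b$ gives $x+1/x=-2$, so $x=-1$, which is not a square; hence only $c=0$ is excluded.
\item $a$ or $b$ lying in $\{\pm K,\pm N,\pm E\}$ or equal to $0$. Each such value fixes $x$ or $y$, hence at most about $14$ values of $c$.
\item presence at fiber $1$, which requires $v_a,v_b\ne\pm1$. Each is a single multiplier value, giving $O(1)$ more excluded $c$.
\end{itemize}
So it suffices to show that many $c$ with $1-c$ odd in the window satisfy $\chi(1+u^c)=1$, more than about $25$.

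The main obstacle is this last count: the parity of an exponent interacts with the character of $1+u^c$. The plan is to note that odd $\upsilon$ means $c$ is even in the least-residue sense. Because $q$ is odd, the set $\{u^{c}:c=1-\upsilon,\ \upsilon \text{ odd},\ 9\le\upsilon\le q-10\}$ equals $\{u^{2j}\}$ over a range of $j$ of length about $q/2-9$, after reducing $c$ mod $q$. I would express the indicator of that exponent interval through multiplicative characters of order dividing $q$ (or $h-1$) and apply the Weil bound to $\sum\psi(z)\chi(1+z)$. This yields a count of $|I|/2+O(\sqrt h\log h)$. For $h\ge131$ this is positive but perhaps not above the exclusion count, so the explicit constants need care. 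If the Weil estimate is too weak at the low end, I would fall back on a direct computer check of the primes $h\equiv3\pmod 8$ up to an explicit threshold $H_0$, which is consistent with the paper's computational certificates, and use the Weil bound for $h>H_0$.
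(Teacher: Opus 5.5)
\textbf{There is a genuine gap in the decisive step, the existence of a $c$ meeting the parity and window conditions.}

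Your reduction is correct. The solutions correspond bijectively to the $c=b-a\in\Z_q$ with $\chi(1+u^c)=1$, and there are exactly $r$ of them. The count is exact, not just up to $O(1)$, and $c=0$ is excluded automatically since $\chi(2)=-1$.

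The trouble is that you leave the final count to an unquantified completion/Weil estimate, and this does not reach the bottom of the range. For $h=131$ the window contains only $24$ odd values of $\upsilon$, so the main term is about $12$. The completion error bound, roughly $\tfrac12\sqrt h$ times a logarithmic factor, is at least as large, and that is before you subtract the excluded values. To make this route rigorous you would need explicit constants plus a computer search over all primes $h\equiv3\pmod8$ below a threshold well above $131$. None of this is supplied, so as written the proposal does not prove the lemma.

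There is also a small slip: for odd $\upsilon$ it is the least residue of $-c=a-b$ that is even, not that of $c$.

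\textbf{The missing idea is that the parity condition costs nothing.} Since $u^{-c}$ is a square, $\chi(1+u^{-c})=\chi(1+u^{c})$. So the solution set is invariant under $c\mapsto -c$, which is just swapping $a$ and $b$. For $c\neq0$ the least residues of $c$ and $-c$ sum to the odd number $q$. Hence exactly one ordering of each unordered pair $\{u^a,u^b\}$ makes the least residue of $a-b$ even. This gives exactly $M=r/2$ candidates with $\upsilon$ odd, with no character sums over exponent intervals. This is the paper's argument, and it finishes with elementary counting:
\begin{enumerate}
\item \emph{Window.} It removes only the even residues $2,4,6,q-9,q-7,q-5,q-3,q-1$. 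Residue $0$ cannot occur, since it would force $u^a=u^b=-1$. Each removed residue fixes $u^{a-b}$, and together with $u^a+u^b=-2$ this determines at most one pair.
\item \emph{Distinguished exponents.} Avoiding the multipliers $1,u^{\pm K},u^{\pm N},u^{\pm E}$ removes at most seven unordered pairs, because distinct pairs are disjoint.
\item \emph{Presence at the fiber $1$.} This is automatic. The only exponent pair absent there has multipliers $-1/3$ and $-3$. That is the $N$-pair when $h\equiv1\pmod3$, and it consists of nonsquares when $h\equiv2\pmod3$.
\end{enumerate}
So at most $15$ of the $M\geq16$ candidates are discarded. This count is exactly where the threshold $h\geq131$ comes from.
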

The proof is deferred to Appendix~\ref{app:deferred-proofs}.

\begin{construction}[Zero-coordinate quadruple replacements]\label{con:zerocoordinatereplacements}
Let $a,b$, and $\upsilon$ be as in Lemma~\ref{lem:existenceofthereqexp} and $j\in \{4, \dots ,M-1\}$ the index such that $\upsilon\in\{2j+1,q-2j-2\}$. In Construction~\ref{con:zerocoordinatequadruples}, set $\ell_j=\upsilon$ and $\ell_i=2i+1$ for $i\neq j$, so that the two selected elements at the index $j$ are $X_{\upsilon}(0)$ and $X_{\upsilon+1}(0)$, and those at each other index $i$ are $X_{2i+1}(0)$ and $X_{2i+2}(0)$. Select the cyclic order of $0,1, \dots , j-1, j+1, \dots, M-1, j$, and use the first quadruple of Construction~\ref{con:zerocoordinatequadruples} at each index. Then, apply the following three changes.
\begin{enumerate}[label=(\roman*)]
    \item At the fiber $1$, let $k_0=E$ if $h\equiv1\pmod3$ and $k_0=N$ otherwise. Begin the order with $\{\pm k_0\},\{\pm a\},\{\pm b\}$. Fix the first transition to be the canonical quadruple
    $$
    Q\bigl(X_{k_0}(1),X_{k_0}(-1);X_a(-1),X_a(1)\bigr).
    $$
    \item Remove the transition quadruple from $\{\pm a\}$ to $\{\pm b\}$ at the coordinate fiber 1 and the zero-coordinate quadruple at the index $j$ from the current collection of rotation quadruples. Then, append the rotation quadruples
    $$
    \begin{gathered}
    Q\bigl(X_{-a}(-1),X_{\upsilon}(0)^{-1};X_b(-1),X_{1}(0)\bigr),\\
    Q\bigl(X_{\upsilon+1}(0)^{-1},X_{-a}(1);X_{-b}(-1),X_{-2}(0)\bigr).
    \end{gathered}
    $$
\item Finally, if $M$ is even delete the zero-coordinate quadruples at the indices $0$ and $1$ and append the rotation quadruples
$$
\begin{gathered}
Q\bigl(X_{1}(0),X_{3}(0);X_{-3}(0),X_{-5}(0)\bigr),\\
Q\bigl(X_{2}(0),X_{4}(0);X_{-4}(0),X_{-6}(0)\bigr).
\end{gathered}
$$
Otherwise, if $M$ is odd, delete the quadruples at the indices $0,1$, and $2$ and instead append
$$
\begin{gathered}
Q\bigl(X_{1}(0),X_{-2}(0);X_{3}(0),X_{6}(0)\bigr),\\
Q\bigl(X_{3}(0),X_{4}(0);X_{-4}(0),X_{-5}(0)\bigr),\\
Q\bigl(X_{5}(0),X_{6}(0);X_{-7}(0),X_{-8}(0)\bigr).
\end{gathered}
$$
\end{enumerate}
\end{construction}

\begin{proposition}[The complete family]\label{prop:thecompletefamily}
For every prime $h\geq 131$ with $h\equiv 3 \mod 8$, the family obtained from the partial admissible family of Proposition~\ref{prop:partial-family} by Constructions~\ref{con:translationreplacements} and~\ref{con:zerocoordinatereplacements} is a complete admissible family of rotation quadruples.
\end{proposition}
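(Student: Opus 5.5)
The plan is to start from the complete family produced by Construction~\ref{con:translationreplacements}, which is already known to be complete. Construction~\ref{con:zerocoordinatereplacements} makes only local changes, and we check that each one preserves two properties: \emph{admissibility} of every quadruple (closure equation plus four-element domain and range), and the \emph{bookkeeping} that every inverse pair occurs exactly once among domains and exactly once among ranges. For the bookkeeping, a set of removed quadruples can be replaced by a set of appended ones whenever the union of removed domains equals the union of appended domains, and likewise for ranges. Admissibility of untouched quadruples is inherited from Lemmas~\ref{lem:exponentpairtransition}, \ref{lem:transition-closure}, \ref{lem:abelianconstruction}, \ref{lem:translationreplacements} and Proposition~\ref{prop:transition-coverage}.

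First I confirm that Construction~\ref{con:zerocoordinatereplacements} is well posed. The choice $\ell_j=\upsilon$ is a legal representative choice: $\upsilon$ is odd with $9\le\upsilon\le q-10$, so either $\upsilon=2j+1$ or $\upsilon=q-2j-2\equiv-(2j+2)$, which determines $j\in\{4,\dots,M-1\}$. The cyclic order of indices is then valid, and Lemma~\ref{lem:abelianconstruction} still applies. In step (i), $\{\pm k_0\}$ is the first pair at the fiber $1$. Indeed, if $h\equiv1\pmod3$ then $w_0=1$ and the first pair is $E$; otherwise it is $N$. By Lemma~\ref{lem:existenceofthereqexp}, $a$ and $b$ are distinct from $K,N,E$ and present at the fiber $1$. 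Hence $\{\pm k_0\},\{\pm a\},\{\pm b\}$ is an admissible beginning of that chain, and all transitions remain canonical, so Proposition~\ref{prop:coordinatefiberchainpairs} is unaffected.

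For step (ii), the removed canonical quadruple $Q(X_a(1),X_a(-1);X_b(-1),X_b(1))$ has domain $[X_a(1)]\cup[X_a(-1)]$ and range $[X_b(1)]\cup[X_b(-1)]$. The removed zero-coordinate quadruple at index $j$ has domain $[X_\upsilon(0)]\cup[X_{\upsilon+1}(0)]$. Its range is the pair at the successor of $j$ in the cyclic order, namely index $0$, so it is $[X_1(0)]\cup[X_2(0)]$. The two appended quadruples have domain pairs $[X_a(-1)],[X_\upsilon(0)],[X_{\upsilon+1}(0)],[X_a(1)]$ and range pairs $[X_b(-1)],[X_1(0)],[X_b(1)],[X_2(0)]$ (since $X_{-b}(-1)^{-1}=X_b(1)$). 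These match the removed sets exactly.

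The closure equations for the appended quadruples follow from Lemma~\ref{lem:zerocoordinateproduct}, the inverse formula of Proposition~\ref{prop:excludedcoord}, and the relation $\upsilon\equiv 1+a-b$. For the first, $X_{-a}(-1)X_b(-1)=X_{b-a}(0)=X_{1-\upsilon}(0)$, which is exactly the closure relation $s_1s_2s_3^{-1}=s_0$ rearranged. The second reduces in the same way, after inverting and using $X_{-a}(1)=X_a(-1)^{-1}$, to $X_{-b}(1)X_a(1)$ (the inverse form) together with the exponents $-1-\upsilon$ and $-2$. Distinctness of the four elements in each domain and range follows from distinctness of the exponent pairs and from injectivity.

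For step (iii), after step (ii) the zero-coordinate quadruples at indices $0,1$ (or $0,1,2$) are still the original first-type ones. I compute their domains and ranges directly (index $1$ maps into index $2$, index $2$ into index $3$, and so on). I then check that the appended quadruples have the same union of domain pairs, $[X_1(0)],\dots,[X_4(0)]$ (resp.\ up to $[X_6(0)]$), and the same union of range pairs, $[X_3(0)],\dots,[X_6(0)]$ (resp.\ up to $[X_8(0)]$). Their closure equations are exponent identities in the abelian group $\{X_k(0)\}$, for example $1-3+3=\dots$ checked termwise. The hypothesis $j\ge4$ keeps index $j$ away from these indices. Here $\upsilon\ge9$ guarantees that the exponents $1,\dots,8$ are untouched, and $M\ge16$ guarantees that the indices and their successors are as claimed.

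The main obstacle is the bookkeeping in (ii)–(iii): identifying exactly which range each deleted quadruple had under the chosen cyclic order, and verifying the second closure equation with the correct signs. I would carry out that sign computation explicitly first.
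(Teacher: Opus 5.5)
Your proposal is correct and follows the same route as the paper. You start from the complete family of Construction~\ref{con:translationreplacements}, check that the deleted and appended quadruples in (ii) and (iii) have the same combined domain and range inverse pairs, and verify the closure equations via Lemma~\ref{lem:zerocoordinateproduct} and $X_k(0)X_m(0)=X_{k+m}(0)$. Two small corrections: the closure equation you left open is $X_{\upsilon+1}(0)^{-1}X_{-b}(-1)X_a(-1)=X_{-\upsilon-1}(0)X_{a-b}(0)=X_{-2}(0)$, which is the lemma with $a$ and $b$ interchanged (the product appearing there is $X_{-b}(-1)X_a(-1)$, not $X_{-b}(1)X_a(1)$), and your sample exponent check in (iii) should read $1-3-3=-5$.
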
\begin{proof}
By the preceding proposition, the translation replacements give
a complete admissible family. The choices in
Construction~\ref{con:zerocoordinatereplacements} before (ii)
preserve this property by
Lemmas~\ref{lem:abelianconstruction}
and~\ref{lem:exponentpairtransition}, since the endpoints of
the coordinate-fiber chain at the fiber $1$ are unchanged. In (ii), the deleted and appended quadruples both have combined
domain $X_{\pm a}(\pm1)\cup[X_{\upsilon}(0)]\cup[X_{\upsilon+1}(0)]$ and combined range
$X_{\pm b}(\pm1)\cup[X_{1}(0)]\cup[X_{2}(0)]$, with each inverse pair
occurring once in each collection. Since $\upsilon=1+a-b$
in $\Z_q$, Lemma~\ref{lem:zerocoordinateproduct} gives
\begin{align*}
X_{-a}(-1)X_b(-1)X_{\upsilon}(0)
&=X_{b-a}(0)X_{\upsilon}(0)=X_{1}(0),\\
X_{\upsilon+1}(0)^{-1}X_{-b}(-1)X_a(-1)
&=X_{-\upsilon-1}(0)X_{a-b}(0)=X_{-2}(0).
\end{align*}
These are the closure equations of the two appended quadruples,
using the lemma with $a$ and $b$ interchanged for the second. For (iii), since $j\geq4$, the indices $0$ through $3$ remain
consecutive, with selected elements $X_{2i+1}(0)$ and $X_{2i+2}(0)$ at each such index $i$.
When $M$ is even, the deleted and appended quadruples both use
$[X_{1}(0)],\ldots,[X_{4}(0)]$ in their domains and
$[X_{3}(0)],\ldots,[X_{6}(0)]$ in their ranges.
When $M$ is odd, they both use $[X_{1}(0)],\ldots,[X_{6}(0)]$
in their domains and $[X_{3}(0)],\ldots,[X_{8}(0)]$ in their ranges.
Each inverse pair occurs once in each collection,
and the closure equations follow directly from
$X_k(0)X_m(0)=X_{k+m}(0)$. All appended elements belong to $S$, since the selected
exponent pairs are present at the fiber $1$ and the
zero-coordinate elements are not the identity. In (ii), each
domain and range consists of a zero-coordinate inverse pair
and a nonzero-coordinate inverse pair. In (iii), the inverse pairs $[X_{1}(0)],\ldots,[X_{8}(0)]$
are distinct, since $q\geq65>16$ implies
$i\not\equiv\pm j\pmod q$ whenever $1\leq i<j\leq8$.
Thus every appended quadruple is admissible, and every inverse
pair of $S$ still occurs exactly once in a domain and once in
a range. Therefore the family is complete and admissible.
\end{proof}

So far, nothing about our construction has decided the orders of the interior exponent pairs in the coordinate-fiber chains of~\ref{con:coordinatefiberchains} other than the two pairs placed over the coordinate fiber 1, or the sign switches of the transition quadruples other than the one fixed in Construction~\ref{con:zerocoordinatereplacements}. The following two sections use these remaining freedoms to ensure $\rho$ is a single cycle.

\section{Reducing the cycle problem to a bounded permutation}
\label{sec:cycle-control}

We now reduce the single-cycle problem using induced permutations. For a permutation $P$ of a finite set $\Omega$ and a nonempty subset
$U\subseteq\Omega$, every $x\in U$ has a least $k\geq1$ such
that $P^k(x)\in U$. The \emph{permutation induced by $P$ on $U$}
is $P_U(x)=P^k(x)$. Its cycles are obtained by omitting the elements
outside $U$ from the cycles of $P$ that meet $U$
\cite[Remark and Definition~2.3]{MingoNica2004}. Thus $P_U$ and $P$
have the same number of cycles whenever every cycle of $P$ meets
$U$. Also, if $\varnothing\ne V\subseteq U$, then
$(P_U)_V=P_V$. We start by fixing a prime $h\geq131$ together with the complete admissible family of Proposition~\ref{prop:thecompletefamily}. We choose the remaining interior exponent-pair orders and sign switches arbitrarily, and let $\rho$ denote the resulting permutation of $S$. The reductions below hold for every possible such choice. We choose two subsets $U\subseteq\mathcal X\subseteq S$ and, at each stage, we first prove that every cycle of the current
permutation meets the next subset, and then calculate the induced
images. The final set $U$ will have size $54$ or $62$, independently of
$h$. We show that it then suffices to choose the remaining sign switches so
that $\rho_{U}$, and hence $\rho$, is a single cycle.

\subsection{Omitting the elements associated with later exponent pairs}
\label{subsec:later-pairs}

Fix a nonzero coordinate fiber represented by
$w\in\{1,\ldots,q-1\}$ with $w\neq1$, and list the corresponding four-element sets
in the chosen order from Construction~\ref{con:coordinatefiberchains} as
$\X_{\pm k_0}(\pm w),\X_{\pm k_1}(\pm w),\ldots,\X_{\pm k_t}(\pm w)$.
The quadruple chosen between consecutive exponent pairs maps each of these
sets bijectively onto the next.
A sign switch changes the individual images but does not change these
four-element sets. Thus, at a coordinate fiber other than $1$, none of the changes in Section~\ref{sec:completingthefamily} affect these quadruples. Therefore, for each $0\leq i\leq t$, the restriction of
$\rho^i$ to the four initial elements at this fiber maps
them bijectively onto $X_{\pm k_i}(\pm w)$. These sets are
disjoint for different values of $i$. Thus, as $x$ ranges
over the four initial elements at this fiber and $i$ ranges
over $0,\ldots,t$, the elements $\rho^i(x)$ include every
element with nonzero exponent at this fiber exactly once. By Construction~\ref{con:coordinatefiberchains}, $k_0=E$ when $w=w_0$ and $k_0=N$ otherwise, and
$k_t=E$ when $w=q-1$ and $k_t=K$ otherwise. We let $\mathcal X$ denote the set of the four initial elements at every nonzero coordinate fiber, the fiber $1$ included, so $|\mathcal X|=4(q-1)$. We first show that every cycle of $\rho$ meets $\mathcal{X}$.

Suppose $C$ is a cycle of $\rho$ disjoint from $\mathcal{X}$.
The previous discussion shows that $C$ can only contain translation
elements, zero-coordinate elements, or elements with nonzero exponent
over the coordinate fiber $1$. The assignments calculated in
Appendix~\ref{app:rho-assignments} show that every translation element
eventually reaches $X_{-E}(w_0)$ or $X_{-E}(-w_0)$ under repeated
applications of $\rho$, both of which belong to $\mathcal{X}$.
Therefore $C$ contains no translation element.

The assignments also show that every zero-coordinate element
eventually reaches $X_{-b}(1)$ or $X_{-b}(-1)$. Starting at any
element associated with $\{b,-b\}$ or a later exponent pair at
the fiber $1$, successive applications of $\rho$ advance through
the remaining order to $\X_{\pm K}(\pm1)$.
By Construction~\ref{con:transitions} and
Proposition~\ref{prop:transition-coverage}, the quadruple leaving
this last exponent pair maps its four elements into the
$N$-exponent pairs at fibers other than $w_0$. These are the
first exponent pairs at those fibers, so all four images belong
to $\mathcal X$. Therefore every zero-coordinate element
eventually reaches an initial element, and $C$ contains no
zero-coordinate element.

It remains to exclude elements with nonzero exponent at the
fiber $1$. The first exponent pair contributes the four initial
elements $\X_{\pm k_0}(\pm1)$, so none can belong to $C$.
Every other element has its exponent in $\{a,-a\}$ or in a
pair occurring at or after $\{b,-b\}$ in the chosen order.
Each of the four elements $\X_{\pm a}(\pm1)$ maps either to
one of the four elements $\X_{\pm b}(\pm1)$ or to a
zero-coordinate element. Both possibilities eventually reach
$\mathcal X$, as do all elements associated with $\{b,-b\}$
or a later exponent pair. Thus every element still under
consideration has an iterate in $\mathcal X$. Since all
iterates of an element of $C$ remain in $C$, this contradicts
$C\cap\mathcal X=\varnothing$. Therefore every cycle of $\rho$
meets $\mathcal X$. 

As every cycle of $\rho$ meets $\mathcal{X}$, omitting the
elements outside $\mathcal{X}$ preserves the number of cycles.
Hence $\rho$ is a single cycle if and only if its induced
permutation $\rho_{\mathcal{X}}$ is a single cycle.

We now determine the first return of every element of
$\mathcal X$. The four initial elements form
$\X_{\pm E}(\pm w_0)$ at $w_0$ and
$\X_{\pm N}(\pm w)$ at every other fiber $w$.
We first
determine which element of the last exponent pair is reached
from the initial element. We then follow the assignments
leaving that pair until the first return to $\mathcal X$.

The same calculation through the chosen
exponent-pair order applies at every fiber other than $1$,
including $q-1$. We write the last pair as $\{\pm k_t\}$
so that this calculation applies whether it is $\{\pm K\}$
or $\{\pm E\}$. By Construction~\ref{con:coordinatefiberchains},
the last pair is $\{\pm K\}$ at every fiber other than $q-1$.
At $q-1$, the $K$-exponent pair is absent by
Proposition~\ref{prop:absentcoordinatefibers}, so the construction
uses $\{\pm E\}$ instead.

The fiber $1$ requires a separate calculation for the first
stage because Construction~\ref{con:zerocoordinatereplacements}
replaced some internal assignments between $\{\pm a\}$ and
$\{\pm b\}$ with paths through zero-coordinate elements.
Its last exponent pair is still $\{\pm K\}$, so afterward
it uses the same $K$-to-$N$ outgoing assignments as the
other fibers ending at that pair.

We first determine the endpoints reached at fibers other than
$1$. Fix such a fiber $w$. From any of its four initial elements,
the first $t$ applications of $\rho$ move through the
exponent-pair order to one of the four elements
$\X_{\pm k_t}(\pm w)$. All of these iterates, including this
endpoint, lie outside $\mathcal X$. For consecutive exponent
pairs $\{\pm k_i\}$ and $\{\pm k_{i+1}\}$, the construction
uses either the canonical transition quadruple or its sign
switch. Their assignments are
$$
\begin{array}{c|cc}
x&\text{first quadruple}&\text{second quadruple}\\ \hline
\X_{k_i}(w)&\X_{k_{i+1}}(w)&\X_{-k_{i+1}}(w)\\
\X_{-k_i}(w)&\X_{-k_{i+1}}(w)&\X_{k_{i+1}}(w)\\
\X_{k_i}(-w)&\X_{-k_{i+1}}(-w)&\X_{k_{i+1}}(-w)\\
\X_{-k_i}(-w)&\X_{k_{i+1}}(-w)&\X_{-k_{i+1}}(-w).
\end{array}
$$
The choice is made separately at each transition, but the four
assignments within a transition are determined together.
At the coordinate argument $w$, the assignments in the first
column preserve the exponent sign and those in the second
column reverse it. At the coordinate argument $-w$, the effects
are the opposite. Therefore, after $t$ such transitions, the
exponent-sign multiplier at $-w$ is $(-1)^t$ times the
multiplier at $w$.

The parity of $t$ is determined by counting the exponent pairs
present at each fiber. By Proposition~\ref{prop:excludedcoord},
an exponent pair is absent exactly when one of its exponents
satisfies $v_k=w$, equivalently,
$$
u^k=\frac{1-2w}{1+2w}
=\frac{1-4w^2}{(1+2w)^2}.
$$
For $1\leq w\leq q-1$, this value is nonzero and is not $1$.
Thus, exactly one exponent pair is absent when $1-4w^2$ is a
square, and none are absent otherwise. We denote
$\chi_w=\chi(1-4w^2)$. As there are $r$ total exponent pairs,
the number $t$ of transitions is $r-2$ when $\chi_w=1$ and
$r-1$ when $\chi_w=-1$. Since $r$ is even, we have
$(-1)^t=\chi_w$.

\begin{lemma}[Assignments through a coordinate fiber]
\label{lem:fiber-endpoint}
Let $w\neq1$. The restriction of $\rho^t$ to
$\X_{\pm k_0}(\pm w)$ is one of exactly two possible
bijections onto $\X_{\pm k_t}(\pm w)$. The first has images
$$
\begin{aligned}
\rho^t\bigl(\X_{k_0}(w)\bigr)&=\X_{k_t}(w),&
\rho^t\bigl(\X_{-k_0}(w)\bigr)&=\X_{-k_t}(w),\\
\rho^t\bigl(\X_{k_0}(-w)\bigr)&=\X_{\chi_w k_t}(-w),&
\rho^t\bigl(\X_{-k_0}(-w)\bigr)&=\X_{-\chi_w k_t}(-w).
\end{aligned}
$$
The second reverses the exponent sign in all four images.
Either bijection is attainable by sign switches at this fiber,
with the interior order fixed, independently of the choices
at all other fibers.
\end{lemma}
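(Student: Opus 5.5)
We track, at a fiber $w\neq1$, how each transition acts on the four elements $\X_{\pm k_i}(\pm w)$, and compose $t$ such steps. Encode an element $\X_{\epsilon k_i}(\eta w)$ by its sign pair $(\epsilon,\eta)\in\{\pm1\}^2$. The table preceding the lemma shows that every transition preserves $\eta$. At argument $w$ it multiplies $\epsilon$ by a factor $\sigma_i\in\{\pm1\}$: $\sigma_i=1$ for the canonical quadruple and $\sigma_i=-1$ for its sign switch. At argument $-w$ it multiplies $\epsilon$ by $-\sigma_i$. Thus the $i$-th transition acts by
$$(\epsilon,\eta)\mapsto(\sigma_i\eta\,\epsilon,\eta).$$
This is read off the table using Lemma~\ref{lem:exponentpairtransition} and Definition~\ref{def:signswitch}. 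It holds at every fiber $w\neq1$ because, as noted at the start of Section~\ref{sec:cycle-control}, nothing in Section~\ref{sec:completingthefamily} alters those chains.

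Composing the $t$ transitions in order gives
$$\rho^t\bigl(\X_{\epsilon k_0}(\eta w)\bigr)=\X_{\Sigma\eta^t\epsilon k_t}(\eta w),\qquad \Sigma=\prod_{i<t}\sigma_i.$$
Here $\eta^t=1$ for $\eta=1$, and $\eta^t=(-1)^t=\chi_w$ for $\eta=-1$, using the parity count established just before the lemma. Taking $\Sigma=1$ gives exactly the first displayed bijection. Taking $\Sigma=-1$ reverses the exponent sign in all four images, which is the second bijection. Since $\Sigma$ takes only two values, these are the only possibilities.

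For attainability, we need $t\ge1$, so that $\Sigma$ can be set to either value by toggling a single sign switch at this fiber without changing the interior order. This holds because the first and last exponent pairs are distinct: they are $\{\pm N\},\{\pm K\}$, or $\{\pm E\},\{\pm K\}$, or $\{\pm N\},\{\pm E\}$, which are distinct by Lemma~\ref{lem:distinguishedexp}. Independence from other fibers holds because the sign switches at fiber $w$ involve only quadruples whose domains and ranges lie in the fiber-$w$ sets $\X_{\pm k_i}(\pm w)$. So choices elsewhere do not affect these assignments.

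The only delicate point is bookkeeping: we must read the table correctly at the argument $-w$, where the sign effect is opposite. Once that is verified with the inverse formula $X_k(w)^{-1}=X_{-k}(-w)$ of Proposition~\ref{prop:excludedcoord}, the rest is the sign product above.
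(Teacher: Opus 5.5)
Your proof is correct and is essentially the paper's argument: you track the exponent-sign multiplier through each transition (opposite at $-w$), compose over the $t$ transitions using $(-1)^t=\chi_w$, and toggle a single sign switch, which is possible since $t\ge1$. The one step you leave implicit is that the two bijections are actually distinct, so that there are \emph{exactly} two. This holds because $k_t\neq -k_t$ in $\mathbb{Z}_q$ when $q$ is odd.
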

\begin{proof}
The choices at $w$ determine one exponent-sign multiplier,
and the preceding calculation determines the multiplier at
$-w$ from it. The first and last exponent pairs are distinct,
so the chain contains at least one transition. Switching any
one transition reverses the exponent sign in all four endpoint
images, giving the other bijection. The two outcomes are
distinct because $k_t\neq-k_t$, and the switch affects only
this fiber.
\end{proof}

Lemma~\ref{lem:fiber-endpoint} includes the fiber $q-1$,
where $k_t=E$. Its calculation stops upon reaching the last
exponent pair. We now determine the outgoing assignments
when that last pair is $\{\pm K\}$. These apply at every
fiber other than $q-1$, including the fiber $1$ once its
paths reach $\X_{\pm K}(\pm1)$.

\begin{lemma}[Assignments leaving the $K$-exponent pair]
\label{lem:K-to-N-assignments}
For $w=1,\ldots,q-2$, the resulting permutation $\rho$
satisfies
$$
\begin{aligned}
\rho(X_K(w))&=X_N(z_w),&
\rho(X_{-K}(w))&=X_{-N}(-y_w),\\
\rho(X_K(-w))&=X_{-N}(-z_w),&
\rho(X_{-K}(-w))&=X_N(y_w).
\end{aligned}
$$
All four images belong to $\mathcal X$. In particular, for
an initial element $x$ at a fiber $2\leq w\leq q-2$, the
first return occurs after exactly $t+1$ applications of $\rho$,
and
$$
\rho_{\mathcal X}(x)=\rho^{t+1}(x),
$$
with $\rho^t(x)$ determined by
Lemma~\ref{lem:fiber-endpoint}.
\end{lemma}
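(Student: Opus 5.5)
The plan is to read the four assignments directly off the $K$-to-$N$ transition quadruple of Construction~\ref{con:transitions}. Then I check that no later modification touches that quadruple, and that its images are initial elements. The first-return statement then follows from the chain structure described before Lemma~\ref{lem:fiber-endpoint}.

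\textbf{Step 1: the assignments.} For $w\in\{1,\ldots,q-2\}$, match $Q\bigl(X_K(w),X_K(-w);X_N(y_w),X_N(z_w)\bigr)$ with the template $Q(s_1,s_3^{-1};s_2,s_0^{-1})$ of Definition~\ref{def:rotationquadruple}. This gives $s_1=X_K(w)$ and $s_2=X_N(y_w)$. Using the inverse formula of Proposition~\ref{prop:excludedcoord}, it also gives $s_3=X_{-K}(w)$ and $s_0=X_{-N}(-z_w)$. The four assignments of the definition then read as follows.
\begin{itemize}
\item $\rho(s_1)=s_0^{-1}$ becomes $\rho(X_K(w))=X_N(z_w)$.
\item $\rho(s_1^{-1})=s_2$ becomes $\rho(X_{-K}(-w))=X_N(y_w)$.
\item $\rho(s_3^{-1})=s_0$ becomes $\rho(X_K(-w))=X_{-N}(-z_w)$.
\item $\rho(s_3)=s_2^{-1}$ becomes $\rho(X_{-K}(w))=X_{-N}(-y_w)$.
\end{itemize}
These are exactly the displayed formulas. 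I must also confirm that these quadruples survive into the final family, so that these are the actual values of $\rho$. Construction~\ref{con:translationreplacements} only deletes translation quadruples. Construction~\ref{con:zerocoordinatereplacements} only deletes zero-coordinate quadruples and the $\{\pm a\}\to\{\pm b\}$ transition at fiber $1$. The quadruples deleted by Construction~\ref{con:zerocoordinatereplacements} have domains among the $a$-elements at fiber $1$ and the zero-coordinate elements, and by Lemma~\ref{lem:existenceofthereqexp} the pair $\{\pm a\}$ is distinct from $\{\pm K\}$. The interior orders and sign switches left free in Section~\ref{sec:cycle-control} only affect canonical transitions inside the chains. So the $K$-to-$N$ quadruples, and hence these values, hold for every admissible choice.

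\textbf{Step 2: the images lie in $\mathcal X$.} By Proposition~\ref{prop:transition-coverage}, the coordinates $\pm y_w,\pm z_w$ over all $w$ give each inverse pair of the $N$-exponent pair at every nonzero fiber other than $w_0$. Inversion preserves inverse pairs, so each image is one of the elements $X_{\pm N}(\pm t)$ with $t\neq w_0$. By Construction~\ref{con:coordinatefiberchains}, $\{\pm N\}$ is the first exponent pair at every fiber other than $w_0$. This includes fiber $1$ when $w_0\neq1$; when $w_0=1$ that fiber is excluded anyway. Hence all four images are initial elements, that is, they lie in $\mathcal X$.

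\textbf{Step 3: first return.} Take $2\le w\le q-2$ and an initial element $x$ at fiber $w$. Since $w\neq q-1$, the last pair there is $\{\pm K\}$; this holds also when $w=w_0$, where the chain runs from $\{\pm E\}$ to $\{\pm K\}$. Since $w\neq1$, the chain at $w$ is unaffected by Construction~\ref{con:zerocoordinatereplacements}. By the discussion preceding Lemma~\ref{lem:fiber-endpoint}, $\rho^i(x)\in X_{\pm k_i}(\pm w)$ for $0\le i\le t$. For $1\le i\le t$ these sets are disjoint from $X_{\pm k_0}(\pm w)$ and lie at fiber $w$, so these iterates are not in $\mathcal X$. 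By Lemma~\ref{lem:fiber-endpoint}, $\rho^t(x)$ is one of $X_{\pm K}(\pm w)$, and Step 1 sends it into $\mathcal X$. Therefore the first return happens at step $t+1$, and $\rho_{\mathcal X}(x)=\rho^{t+1}(x)$.

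The only delicate point is the bookkeeping in Step 1: tracking signs under the inverse formula, and verifying that no replacement removed a $K$-to-$N$ quadruple.
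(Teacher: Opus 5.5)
Your proposal is correct and follows essentially the same route as the paper's proof. You read the four assignments off the $K$-to-$N$ quadruple using Definition~\ref{def:rotationquadruple} and the inverse formula, observe that the replacements of Section~\ref{sec:completingthefamily} leave that quadruple intact, use Proposition~\ref{prop:transition-coverage} to place the images among the initial $N$-elements, and obtain the first return from the chain structure at the fiber; you simply spell out the template matching and the survival check in more detail than the paper does.
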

\begin{proof}
Construction~\ref{con:transitions} gives the quadruple
$$
Q\bigl(X_K(w),X_K(-w);X_N(y_w),X_N(z_w)\bigr),
$$
which is unchanged by the replacements in
Section~\ref{sec:completingthefamily}.
Definition~\ref{def:rotationquadruple} and the inverse formula
$X_k(v)^{-1}=X_{-k}(-v)$ give the four displayed assignments.
By Proposition~\ref{prop:transition-coverage}, their images
belong to the $N$-exponent pairs at fibers other than $w_0$.
These are the first exponent pairs at those fibers, so all
four images belong to $\mathcal X$.

For $2\leq w\leq q-2$, the first $t$ iterates of an initial
element belong to later exponent pairs at the same fiber
and lie outside $\mathcal X$. The next iterate belongs to
$\mathcal X$, proving the stated first return.
\end{proof}
Lemmas~\ref{lem:fiber-endpoint}
and~\ref{lem:K-to-N-assignments} determine the first return
of every initial element at a fiber other than $1$ and $q-1$.
To determine $\rho_{\mathcal X}$ on all of $\mathcal X$,
we must also find the first returns of the four initial
elements at each of these two fibers.

At $q-1$, Lemma~\ref{lem:fiber-endpoint} tells us which
element of the last $E$-exponent pair is reached from each
initial element. These last elements lie outside $\mathcal X$,
so we must continue until an initial element is reached.
The translation replacements connect them to the initial
elements at $w_0$. We therefore need to determine which
initial element at $w_0$ is reached from each of the four
last elements at $q-1$. Combining this correspondence with
Lemma~\ref{lem:fiber-endpoint} will give the first returns
at $q-1$.

At the fiber $1$, Lemma~\ref{lem:K-to-N-assignments} already
gives the first return once we know which element of the
last $K$-exponent pair is reached. To determine that endpoint,
we must follow each initial element through the chosen order.
The first transition takes it to the $a$-exponent pair,
but the assignments from this pair to the $b$-exponent pair
were changed by the zero-coordinate replacements. We need
to know which element of the $b$-exponent pair is reached
before we can follow the remaining transitions to the
$K$-exponent pair. The following lemma supplies this
correspondence and the correspondence needed at $q-1$.

\begin{lemma}[Assignments after the changes]
\label{lem:replacement-assignments}
The resulting permutation $\rho$ satisfies the following.
\begin{enumerate}[label=(\alph*)]
\item $\rho(X_E(q-1))=X_E(-w_0)$ and $\rho(X_E(-(q-1)))=X_E(w_0)$. Successive applications of $\rho$ starting at $X_{-E}(-(q-1))$ pass only through translation elements before reaching $X_{-E}(w_0)$, and likewise from $X_{-E}(q-1)$ to $X_{-E}(-w_0)$. These two sequences contain every translation element exactly once.
\item $\rho(X_a(1))=X_b(-1)$ and $\rho(X_a(-1))=X_b(1)$. Successive applications of $\rho$ starting at $X_{-a}(-1)$ pass only through zero-coordinate elements before reaching $X_{-b}(-1)$, and likewise from $X_{-a}(1)$ to $X_{-b}(1)$. These two sequences contain every zero-coordinate element exactly once.
\end{enumerate}
\end{lemma}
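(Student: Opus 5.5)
The proof is a direct unwinding of Definition~\ref{def:rotationquadruple} applied to the replacement quadruples, followed by tracing orbits through the abelian blocks. For (a), I would read off the assignments of the last two quadruples of Lemma~\ref{lem:translationreplacements}, using the inverse formula $X_k(v)^{-1}=X_{-k}(-v)$ of Proposition~\ref{prop:excludedcoord}.

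From $Q\bigl(T(2),X_E(-(q-1));T(\ell-2),X_{-E}(-w_0)\bigr)$ we have $s_1=T(2)$, $s_3^{-1}=X_E(-(q-1))$, $s_2=T(\ell-2)$ and $s_0=X_E(w_0)$. This yields
$$\rho(T(2))=X_{-E}(-w_0),\quad \rho(T(-2))=T(\ell-2),\quad \rho(X_E(-(q-1)))=X_E(w_0),\quad \rho(X_{-E}(q-1))=T(2-\ell).$$
The third quadruple gives, in the same way,
$$\rho(X_E(q-1))=X_E(-w_0),\quad \rho(X_{-E}(-(q-1)))=T(r+\ell+2)^{-1},\quad \rho(T(r+2))=X_{-E}(w_0).$$
This proves the first sentence of (a). It also identifies the entry points $T(2-\ell)$ and $T(r+\ell+2)^{-1}$ into the translation elements, and the exit points $T(2)$ and $T(r+2)$ out of them.

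For (b), I would do the same with the two quadruples appended in Construction~\ref{con:zerocoordinatereplacements}(ii). From $Q\bigl(X_{-a}(-1),X_\upsilon(0)^{-1};X_b(-1),X_1(0)\bigr)$ we get $\rho(X_a(1))=X_b(-1)$, $\rho(X_{-a}(-1))=X_{-1}(0)$, and $\rho(X_{\upsilon}(0))=X_{-b}(-1)$. The second quadruple gives $\rho(X_a(-1))=X_b(1)$, $\rho(X_{-a}(1))=X_{\upsilon+1}(0)^{-1}$, and $\rho(X_2(0))=X_{-b}(1)$. What remains is to show that the two paths, from $X_{-1}(0)$ to $X_\upsilon(0)$ and from $X_{-\upsilon-1}(0)$ to $X_2(0)$, together cover all $4M$ zero-coordinate elements exactly once, and similarly for the $4r$ translation elements in (a).

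For these covering statements I would encode each element at an index $i$ by its index together with a state in $\{U,U^{-1},V,V^{-1}\}$. The first quadruple of Lemma~\ref{lem:abelianconstruction} then acts as
$$U_i\mapsto U_{i'}^{-1},\qquad U_i^{-1}\mapsto V_{i'},\qquad V_i\mapsto V_{i'}^{-1},\qquad V_i^{-1}\mapsto U_{i'}.$$
So each step advances the index to its successor and cycles the state with period $4$. The second quadruple induces a different state permutation, which I would tabulate once. Along a run of unmodified indices, the orbit is therefore a walk on (index, state), and I would compute it in closed form over the long increasing run of the cyclic order. For (a), that order is $1,\ell+1,\ell-1,\ell-3$ followed by the remaining indices, with the second quadruple at $\ell-3$ and the two deleted quadruples at $1$ and $\ell-1$ acting as the gates. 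For (b), the order is $0,\dots,j-1,j+1,\dots,M-1,j$, with index $j$ deleted and the block at indices $0$ through $2$ or $3$ replaced as in (iii).

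The proof then becomes a finite bookkeeping check. Following one entry point, I would record the state after each full pass around the cyclic order, which depends on the length of the run modulo $4$. I would then show that the walk visits every (index, state) pair before exiting, and that it uses exactly the states at each index that the other walk does not.

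The main obstacle is this parity and state bookkeeping. Whether the two walks partition all elements, rather than closing up into separate small cycles, depends on $r$ and on $M$ modulo $2$ or $4$. This is precisely why the second quadruple is used at $\ell-3$, and why (iii) distinguishes $M$ even from $M$ odd. I would first verify the claim by hand in the case $M$ even, tracking the states through the special block and around the long run. I would then redo the special block for $M$ odd, and check that the two exceptional blocks have the same net effect on (index, state) as a full pass of the generic rule. In each case I would also confirm that the exit elements are reached last.
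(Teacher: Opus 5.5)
Your outline (read off the assignments of the replacement quadruples, then follow orbits through the runs of abelian quadruples by tracking index and state) is the paper's strategy. Your read-off does establish the first sentence of (a) and of (b). However, the covering claims are the substance of the lemma, and you leave them as a ``finite bookkeeping check''; the model you set up for that check is wrong.

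\textbf{The state map.} The generic step is not a $4$-cycle on states. By Definition~\ref{def:rotationquadruple}, the first quadruple $Q(U_i,V_i;V_{i'},U_{i'})$ gives $\rho(U_i)=U_{i'}$, $\rho(U_i^{-1})=V_{i'}$, $\rho(V_i)=U_{i'}^{-1}$, $\rho(V_i^{-1})=V_{i'}^{-1}$. This is an involution on states fixing $U$ and $V^{-1}$. The second quadruple swaps $U$ and $V^{-1}$ and fixes $U^{-1}$ and $V$.

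\begin{itemize}
\item What matters is therefore the parity of a run of first quadruples, not its length modulo $4$. The paper uses exactly this: the run from $\ell-3$ to $1$ is one second quadruple followed by $r-4$ (even) first quadruples, and after the block in (iii) there remain $M-3$, respectively $M-4$, (odd) first quadruples before index $j$.
\item Your model already goes wrong at the single first quadruple at $\ell+1$: it sends $T(r+\ell+2)$ to $T(r+\ell)^{-1}$ instead of $T(\ell)^{-1}$.
\item You also plan to check that the (iii) blocks reproduce the generic net effect, but the (iii) blocks and the second quadruple at $\ell-3$ exist precisely to \emph{change} it. If (iii) were omitted, so that only first quadruples lead from $0$ to $j$, then for $M$ even $X_{-2}(0)\leadsto X_{-\upsilon-1}(0)\leadsto X_{-2}(0)$ closes into a cycle that never reaches $X_{\pm b}(\pm1)$.
\end{itemize}

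\textbf{The entry and exit assignments.} Several of the assignments you read off are wrong, and they are exactly the entry and exit points of the walks.

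\begin{itemize}
\item In (a), the third quadruple gives $\rho(X_{-E}(-(q-1)))=T(r+\ell+2)$. The element $T(r+\ell+2)^{-1}$ is $\rho(T(r+2)^{-1})$.
\item You also need all four assignments of $Q(T(\ell),T(r+\ell)^{-1};T(\ell+2)^{-1},T(r+\ell-2))$. This quadruple feeds index $\ell-1$ back into index $\ell+1$ and into the long run starting at $\ell-3$. The orbit of $X_{-E}(-(q-1))$ visits index $\ell+1$ three times before entering that run.
\item In (b), the appended quadruples give $\rho(X_{-a}(-1))=X_1(0)$, $\rho(X_{-a}(1))=X_2(0)$, $\rho(X_\upsilon(0)^{-1})=X_{-1}(0)$, $\rho(X_{\upsilon+1}(0)^{-1})=X_{-2}(0)$, $\rho(X_\upsilon(0))=X_{-b}(1)$ and $\rho(X_{\upsilon+1}(0))=X_{-b}(-1)$.
\item Your $\rho(X_2(0))=X_{-b}(1)$ cannot hold, since $X_2(0)$ is in the range, not the domain, of these quadruples. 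Its image is set by the block in (iii).
\end{itemize}

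With the correct data, one walk loops around the cyclic order several times. For $M$ even it is
$X_{-a}(1)\leadsto X_2(0)\leadsto X_{-\upsilon-1}(0)\leadsto X_{-2}(0)\leadsto X_{-\upsilon}(0)\leadsto X_{-1}(0)\leadsto X_\upsilon(0)\leadsto X_{-b}(1)$.

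To prove the lemma you must compute the net effect of the (iii) block explicitly, as the paper does with $\rho^{M-2}(X_1(0))=X_{\upsilon+1}(0)$, $\rho^{M}(X_1(0)^{-1})=X_{\upsilon}(0)$, and so on. You then need to check that the two walks traverse each surviving run exactly once from each of the four elements at its starting index.
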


The proof is given in Appendix~\ref{app:rho-assignments}.
Part (a), together with Lemma~\ref{lem:fiber-endpoint},
determines the first return of every initial element at
$q-1$. Part (b) determines which element of the $b$-exponent pair
is reached from each element of the $a$-exponent pair at
the fiber $1$. We can now combine these assignments with
the first transition into the $a$-exponent pair and the
remaining transitions from the $b$-exponent pair to the
last $K$-exponent pair. This will determine the endpoint
reached from each of the four initial elements.
Lemma~\ref{lem:K-to-N-assignments} then gives its next image
in $\mathcal X$, which is its first return because all
intermediate elements belong to later exponent pairs at
the fiber $1$ or to the zero-coordinate fiber.

\begin{lemma}[Assignments through the fiber $1$]
\label{lem:fiber-one}
Let $k_0=E$ if $w_0=1$ and $k_0=N$ otherwise. Starting at an initial element of the fiber $1$, successive applications of $\rho$ pass through elements of this fiber and zero-coordinate elements before reaching $\X_{\pm K}(\pm1)$. The resulting bijection from $\X_{\pm k_0}(\pm1)$ to $\X_{\pm K}(\pm1)$ has one of two forms. The first has images
$$
\begin{aligned}
X_{k_0}(1)&\mapsto X_{\chi(-3)K}(-1),&
X_{-k_0}(1)&\mapsto X_{-K}(1),\\
X_{k_0}(-1)&\mapsto X_{-\chi(-3)K}(-1),&
X_{-k_0}(-1)&\mapsto X_K(1),
\end{aligned}
$$
and the second reverses the exponent sign in all four images. Either bijection is attainable by sign switches at the fiber $1$, with the interior order fixed, independently of the choices at all other fibers.
\end{lemma}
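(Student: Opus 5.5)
The plan is to compose three pieces of the path from an initial element at the fiber $1$ to $\X_{\pm K}(\pm1)$: the fixed first transition from $\{\pm k_0\}$ to $\{\pm a\}$, the replaced passage from $\{\pm a\}$ to $\{\pm b\}$, and the ordinary chain of transitions from $\{\pm b\}$ to $\{\pm K\}$.

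First I check that $k_0$ is consistent. By Proposition~\ref{prop:absentcoordinatefibers}, $w_0=1$ exactly when $h\equiv1\pmod3$. This matches the choice of $k_0$ in Construction~\ref{con:zerocoordinatereplacements}(i), and it also matches the first exponent pair of Construction~\ref{con:coordinatefiberchains}.

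\emph{Step 1: the first transition.} The first transition at the fiber $1$ is fixed as the canonical quadruple $Q(X_{k_0}(1),X_{k_0}(-1);X_a(-1),X_a(1))$. Lemma~\ref{lem:exponentpairtransition} (with $m=a$) gives
$$X_{k_0}(1)\mapsto X_a(1),\quad X_{-k_0}(-1)\mapsto X_a(-1),\quad X_{k_0}(-1)\mapsto X_{-a}(-1),\quad X_{-k_0}(1)\mapsto X_{-a}(1).$$

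\emph{Step 2: the passage from $\{\pm a\}$ to $\{\pm b\}$.} Lemma~\ref{lem:replacement-assignments}(b) gives $X_a(1)\mapsto X_b(-1)$ and $X_a(-1)\mapsto X_b(1)$ directly. It also sends $X_{-a}(-1)$ to $X_{-b}(-1)$ and $X_{-a}(1)$ to $X_{-b}(1)$, passing only through zero-coordinate elements. None of these intermediate elements lies in $\mathcal X$. Composing with Step 1 gives the map from the initial elements to the $b$-pair:
$$X_{k_0}(1)\mapsto X_b(-1),\quad X_{-k_0}(-1)\mapsto X_b(1),\quad X_{k_0}(-1)\mapsto X_{-b}(-1),\quad X_{-k_0}(1)\mapsto X_{-b}(1).$$

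\emph{Step 3: the chain from $\{\pm b\}$ to $\{\pm K\}$.} The remaining chain is an ordinary sequence of canonical transitions or their sign switches, because the replacements of Section~\ref{sec:completingthefamily} touched only the $a$-to-$b$ transition at this fiber. So the sign-column table preceding Lemma~\ref{lem:fiber-endpoint} applies to this sub-chain of $t'$ transitions. Let $\varepsilon=\pm1$ be the exponent-sign multiplier at argument $1$. The multiplier at $-1$ is then $(-1)^{t'}\varepsilon$, so that
$$X_{\pm b}(1)\mapsto X_{\pm\varepsilon K}(1),\qquad X_{\pm b}(-1)\mapsto X_{\pm(-1)^{t'}\varepsilon K}(-1).$$

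\emph{Step 4: the parity of $t'$.} At the fiber $1$ we have $1-4w^2=-3$, so $\chi_1=\chi(-3)$. The parity count preceding Lemma~\ref{lem:fiber-endpoint} gives $(-1)^t=\chi(-3)$ for the full chain of $t$ transitions at this fiber. The order begins $\{\pm k_0\},\{\pm a\},\{\pm b\}$, so $t'=t-2$ and hence $(-1)^{t'}=\chi(-3)$.

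\emph{Step 5: composing.} Taking $\varepsilon=1$ and composing Step 2 with Step 3 yields exactly the first displayed bijection. For example, $X_{k_0}(1)\mapsto X_b(-1)\mapsto X_{\chi(-3)K}(-1)$ and $X_{-k_0}(-1)\mapsto X_b(1)\mapsto X_K(1)$. Taking $\varepsilon=-1$ reverses all four exponent signs, which is the second form.

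\emph{Step 6: attainability and the path condition.} Lemma~\ref{lem:existenceofthereqexp} makes $\{\pm b\}$ distinct from $\{\pm K\}$, so $t'\ge1$. Switching any one transition in the sub-chain flips $\varepsilon$, and these switches are not the fixed first transition. Such a switch affects only the fiber $1$, so both forms are attainable independently of all other fibers. Finally, every intermediate element lies in a non-initial exponent pair at the fiber $1$ or is a zero-coordinate element. This gives the stated description of the path.

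The main obstacle is Step 4, namely confirming that the parity rule still applies after the $a$-to-$b$ replacement, which breaks the usual chain at the fiber $1$. I would handle it by treating the $b$-to-$K$ segment as its own chain of $t-2$ transitions and recomputing the sign-column argument for that segment alone.
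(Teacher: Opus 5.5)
Your proposal is correct and follows essentially the same route as the paper's proof. Both compose three pieces: the fixed canonical transition into $\{\pm a\}$, the replaced passage of Lemma~\ref{lem:replacement-assignments}(b) into $\{\pm b\}$, and the remaining $t-2=p-3$ ordinary transitions to $\{\pm K\}$, whose parity $(-1)^{t-2}=\chi(-3)$ fixes the multiplier at $-1$. The only differences are cosmetic. The paper counts via the number $p\in\{r-1,r\}$ of present pairs and uses $p-3\ge r-4>0$ for attainability, whereas you use $(-1)^t=\chi(-3)$ and $\{\pm b\}\neq\{\pm K\}$.
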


This calculation is also given in Appendix~\ref{app:rho-assignments}.
Combining the preceding lemmas gives the following description of
$\rho_{\mathcal X}$.

\begin{proposition}[First returns to $\mathcal X$]
\label{prop:first-omission}
Let $x$ be an initial element at the fiber $w$, and let $y\in\X_{\pm k_t}(\pm w)$ be the element reached by Lemma~\ref{lem:fiber-endpoint}, or by Lemma~\ref{lem:fiber-one} when $w=1$. If $w\leq q-2$, then $\rho_{\mathcal X}(x)=\rho(y)$, with
$\rho(y)$ given by Lemma~\ref{lem:K-to-N-assignments}.
If $w=q-1$ and $y=X_{\epsilon E}(\tau(q-1))$ for signs $\epsilon,\tau$, then $\rho_{\mathcal X}(x)=X_{\epsilon E}(-\tau w_0)$. In particular, when $2\leq w\leq q-3$ and $(y_w,z_w)=(-w-2,w+1)$,
$$
\begin{aligned}
\rho_{\mathcal X}(\X_{\pm N}(w))&=\X_N(w+1),&
\rho_{\mathcal X}(\X_{\mp N}(w))&=\X_{-N}(w+2),\\
\rho_{\mathcal X}(\X_{\pm\chi_w N}(-w))&=\X_{-N}(-w-1),&
\rho_{\mathcal X}(\X_{\mp\chi_w N}(-w))&=\X_N(-w-2).
\end{aligned}
$$
Here the upper signs are used throughout for the first bijection of Lemma~\ref{lem:fiber-endpoint}, and the lower signs throughout for the second.
\end{proposition}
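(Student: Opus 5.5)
The plan is to assemble the statement from the lemmas already proved, splitting into three cases according to the fiber $w$: $w\le q-2$ (including $w=1$), $w=q-1$, and the explicit formula for $2\le w\le q-3$.

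\emph{Case $w\le q-2$.} For $w\ne1$, Lemma~\ref{lem:fiber-endpoint} gives the endpoint $y=\rho^t(x)\in \X_{\pm K}(\pm w)$, and all earlier iterates lie outside $\mathcal X$, since they belong to later exponent pairs at the same fiber. Because $w\le q-2$ we have $w\neq q-1$, so the last pair is $\{\pm K\}$. Lemma~\ref{lem:K-to-N-assignments} then shows that $\rho(y)\in\mathcal X$, so $\rho_{\mathcal X}(x)=\rho(y)$. For $w=1$ I would invoke Lemma~\ref{lem:fiber-one} instead. The intermediate iterates lie either at the fiber $1$ in non-initial exponent pairs or in the zero-coordinate fiber, and neither kind meets $\mathcal X$. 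Applying Lemma~\ref{lem:K-to-N-assignments} again gives $\rho_{\mathcal X}(x)=\rho(y)$.

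\emph{Case $w=q-1$.} Here the last pair is $\{\pm E\}$, and Lemma~\ref{lem:fiber-endpoint} gives $y=X_{\epsilon E}(\tau(q-1))$. I would check the four sign combinations against Lemma~\ref{lem:replacement-assignments}(a).
\begin{itemize}
\item If $\epsilon=+$, then $\rho(y)=X_E(-\tau w_0)$ directly.
\item If $\epsilon=-$, the path from $y$ runs through translation elements only, none of which lie in $\mathcal X$. It first reaches $X_{-E}(w_0)$ when $\tau=-$ and $X_{-E}(-w_0)$ when $\tau=+$, which is $X_{-E}(-\tau w_0)$ in both cases.
\end{itemize}
All four targets are initial elements at $w_0$, since $k_0=E$ there. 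This gives $\rho_{\mathcal X}(x)=X_{\epsilon E}(-\tau w_0)$.

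\emph{Explicit formula for $2\le w\le q-3$.} With $(y_w,z_w)=(-w-2,w+1)$, I substitute the endpoints from Lemma~\ref{lem:fiber-endpoint} (with $k_0=N$, $k_t=K$) into Lemma~\ref{lem:K-to-N-assignments}. Under the first bijection:
\begin{itemize}
\item $X_N(w)\mapsto X_K(w)\mapsto X_N(w+1)$;
\item $X_{-N}(w)\mapsto X_{-K}(w)\mapsto X_{-N}(-y_w)=X_{-N}(w+2)$;
\item $X_{\chi_w N}(-w)\mapsto X_{\chi_w^2 K}(-w)=X_K(-w)\mapsto X_{-N}(-w-1)$;
\item $X_{-\chi_w N}(-w)\mapsto X_{-K}(-w)\mapsto X_N(-w-2)$.
\end{itemize}
These are the upper-sign formulas. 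The second bijection flips every endpoint exponent sign, which swaps which preimage lands on which target; this gives the lower-sign formulas. The hypothesis $w\le q-3$ ensures that $w+2\le q-1$ stays among the nonzero fibers, although the formulas only need the values $z_w,y_w$.

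The only step needing care, and the one I expect to be the main obstacle, is the sign bookkeeping at $-w$. The factor $\chi_w$ must be tracked correctly through $\chi_w^2=1$, and $-y_w$ must be correctly turned into $w+2$. Everything else is immediate from the cited lemmas.
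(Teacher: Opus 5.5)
Your proposal is correct and follows the paper's proof essentially step for step. It uses the same three cases: $w\le q-2$, with Lemma~\ref{lem:fiber-one} at $w=1$; $w=q-1$, via Lemma~\ref{lem:replacement-assignments}(a); and the explicit formulas obtained by composing Lemma~\ref{lem:fiber-endpoint} with Lemma~\ref{lem:K-to-N-assignments}. Your sign checks just spell out the paper's brief substitutions.

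The one thing to tighten is why $k_0=N$ when $2\le w\le q-3$. It holds because $w_0\in\{1,q-2\}$ lies outside $[2,q-3]$. That, and not keeping $w+2$ among the nonzero fibers, is what the bound $w\le q-3$ is needed for.
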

\begin{proof}
None of the elements encountered following $x$, up to and including
$y$, is an initial element. These elements belong to later exponent pairs at
the fiber $w$, or are zero-coordinate elements when $w=1$. Suppose first that $w\leq q-2$. The last exponent pair is
$\{\pm K\}$, and Lemma~\ref{lem:K-to-N-assignments} gives
$\rho(y)\in\mathcal X$. Thus $\rho(y)$ is the first initial
element reached after $x$, proving
$\rho_{\mathcal X}(x)=\rho(y)$. Now suppose that $w=q-1$ and
$y=X_{\epsilon E}(\tau(q-1))$. By the translation assignments of
Lemma~\ref{lem:replacement-assignments}, successive applications of $\rho$
take $y$ to $X_{\epsilon E}(-\tau w_0)$, directly when
$\epsilon=1$ and through translation elements only when
$\epsilon=-1$. The final element is initial at the fiber $w_0$,
and none of the intermediate translation elements belongs to
$\mathcal X$. Hence
$$
\rho_{\mathcal X}(x)=X_{\epsilon E}(-\tau w_0).
$$
Finally, when $2\leq w\leq q-3$, the first and last exponent pairs
are $\{\pm N\}$ and $\{\pm K\}$, since
$w_0\in\{1,q-2\}$. Substituting
$(y_w,z_w)=(-w-2,w+1)$ into the assignments of
Lemma~\ref{lem:K-to-N-assignments} and composing
with the first bijection of Lemma~\ref{lem:fiber-endpoint} gives
the upper-sign formulas. The second bijection reaches each of the
same four endpoints from the initial element with the opposite
exponent sign, giving the lower-sign formulas.
\end{proof}

\subsection{Omitting further elements of \texorpdfstring{$\mathcal X$}{X}}
\label{subsec:interval-omission}

We now choose a smaller subset $U\subseteq\mathcal X$ and show that
every cycle of $\rho_{\mathcal X}$ meets $U$. This will allow us to
omit the elements of $\mathcal X\setminus U$ without changing the
number of cycles of $\rho$. We use the formulas at the end of
Proposition~\ref{prop:first-omission}, which apply whenever
$2\leq w\leq q-3$ and $(y_w,z_w)=(-w-2,w+1)$. By these formulas,
an element with coordinate argument $w$ has its image at $w+1$ or
$w+2$, while an element with coordinate argument $-w$ has its image
at $-(w+1)$ or $-(w+2)$. Thus, in either case, the image belongs to
the coordinate fiber $w+1$ or $w+2$.

We choose an interval $[a,b]$ of integers
labeling coordinate fibers at which these formulas apply. The
following choice leaves only $12$ or $14$ coordinate fibers outside
the interval, independently of $h$.

\begin{proposition}[The interval]
\label{prop:ordinary-interval-support}
Let $[a,b]=[4,q-10]$ if $h\equiv1\pmod3$ and
$[a,b]=[5,q-11]$ if $h\equiv2\pmod3$. Its complement $L$ in
$\{1,\ldots,q-1\}$ is
$$
L=\begin{cases}
\{1,2,3\}\cup\{q-9,\ldots,q-1\},&h\equiv1\pmod3,\\[2pt]
\{1,2,3,4\}\cup\{q-10,\ldots,q-1\},&h\equiv2\pmod3,
\end{cases}
$$
with $|L|=12$ or $14$, respectively. For every $w\in[a,b]$,
we have $(y_w,z_w)=(-w-2,w+1)$. Also, $2\leq a<b\leq q-3$,
and $w_0\notin[a,b+2]$.
\end{proposition}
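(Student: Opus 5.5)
The plan is to verify the statement by direct bookkeeping, treating the two residue classes of $h$ modulo $3$ separately. Throughout we use the standing hypothesis $h\ge131$. Then $q=(h-1)/2\ge65$, which is far larger than any constant appearing in the statement.

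First, the complement $L$. When $h\equiv1\pmod3$ the interval is $[4,q-10]$. Its complement in $\{1,\ldots,q-1\}$ is $\{1,2,3\}\cup\{q-9,\ldots,q-1\}$, which has $3+9=12$ elements. When $h\equiv2\pmod3$ the interval is $[5,q-11]$. Its complement is $\{1,\ldots,4\}\cup\{q-10,\ldots,q-1\}$, with $4+10=14$ elements. For the inequalities, $2\le a$ is immediate. Also $a<b$ holds because $q-11>5$, and $b\le q-10\le q-3$.

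Next, the claim $(y_w,z_w)=(-w-2,w+1)$. Construction~\ref{con:transitions} sets $z_w=w+1$, hence $y_w=w-2z_w=-w-2$, for every $w$ outside the exceptional rows, so it suffices to check that no exceptional $w$ lies in $[a,b]$. For $u^N=-1/3$, which by Lemma~\ref{lem:distinguishedexp} is the case $h\equiv1\pmod3$, the exceptional values are $1,3,q-9,q-4,q-2$. Each of these is either below $4$ or above $q-10$. For $u^N=-2/3$, the case $h\equiv2\pmod3$, the exceptional values are $1,4,q-10,q-5,q-4,q-3,q-2$. Each is either below $5$ or above $q-11$. The hypotheses of Construction~\ref{con:transitions} ($q\ge21$ or $q\ge29$) hold since $q\ge65$. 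This also guarantees that the listed exceptional $w$ are distinct, so there is no wraparound ambiguity.

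Finally, the claim $w_0\notin[a,b+2]$. By Proposition~\ref{prop:absentcoordinatefibers}, $w_0=1$ when $h\equiv1\pmod3$, and $1<4=a$. When $h\equiv2\pmod3$ we have $w_0=q-2$, while $b+2=q-9<q-2$. The only mild point of care in the whole argument is this step: correctly matching the residue of $h$ modulo $3$ with the value of $u^N$ and with the value of $w_0$. Beyond that, the proof is routine.
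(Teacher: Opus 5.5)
Your proposal is correct and follows essentially the same argument as the paper: observe that every exceptional $w$ in Construction~\ref{con:transitions} lies in $L$, so $z_w=w+1$ and $y_w=-w-2$ on $[a,b]$, then use $q\ge65$ and $w_0\in\{1,q-2\}$ for the inequalities. You simply write out the complement counts and the case-by-case comparisons that the paper leaves implicit.
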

\begin{proof}
Construction~\ref{con:transitions} sets $z_w=w+1$ except at
$1,3,q-9,q-4,q-2$ in the first case and
$1,4,q-10,q-5,q-4,q-3,q-2$ in the second. These all belong to
$L$, so $z_w=w+1$ and $y_w=w-2z_w=-w-2$ for every
$w\in[a,b]$. Since $q\geq65$, the stated inequalities hold,
and $w_0\in\{1,q-2\}$ lies outside $[a,b+2]$.
\end{proof}

We include in $U$ the four initial elements at each fiber
in $L$, together with all their images under
$\rho_{\mathcal X}$. Then
$\rho_U(x)=\rho_{\mathcal X}(x)$ for every initial element
$x$ at a fiber in $L$. We now identify which of these
images belong to fibers in $[a,b]$.

For each $w\in[a,b]$, the two elements $\X_N(w)$ and
$\X_{-N}(w)$ have images
$$
\X_N(w+1)\qquad\text{and}\qquad\X_{-N}(w+2),
$$
in one order or the other. As $w$ varies through $[a,b]$, the
images that still belong to fibers in $[a,b]$ are precisely
$$
\begin{gathered}
\X_N(a+1),\X_N(a+2),\ldots,\X_N(b),\\
\X_{-N}(a+2),\X_{-N}(a+3),\ldots,\X_{-N}(b).
\end{gathered}
$$
Every element in this display therefore has a preimage at a
fiber in $[a,b]$. Since $\rho_{\mathcal X}$ is a permutation,
each element has exactly one preimage. None of these elements
can also be the image of an element at a fiber in $L$. The only elements with positive coordinate arguments at fibers
in $[a,b]$ missing from this display are
$$
\X_N(a),\qquad\X_{-N}(a),\qquad\X_{-N}(a+1).
$$
None has a preimage at a fiber in $[a,b]$, so their preimages
must belong to fibers in $L$. These are therefore exactly the
three elements with positive coordinate arguments through which
an assignment from a fiber in $L$ enters the fibers in $[a,b]$.
Applying the same reasoning to the negative coordinate arguments
gives the three elements
$$
\X_{-N}(-a),\qquad\X_N(-a),\qquad\X_N(-a-1).
$$
These are the six additional elements that we include in $U$. We follow successive images of these elements until they reach a fiber in $L$ again. For positive coordinate arguments,
the formulas above show that the first element reached outside
the fibers in $[a,b]$ must be one of
$$
\X_N(b+1),\qquad\X_{-N}(b+1),\qquad\X_{-N}(b+2).
$$
The three possibilities with negative coordinate arguments are
the inverses of these elements. We denote these entry and exit elements as ordered triples. We define the
corresponding triples at each $w=a,\ldots,b+1$ by
$$
\begin{aligned}
D_w^+&=(\X_N(w),\X_{-N}(w),\X_{-N}(w+1)),\\
D_w^-&=(\X_{-N}(-w),\X_N(-w),\X_N(-w-1)).
\end{aligned}
$$
The entries of $D_w^-$ are the inverses of the corresponding
entries of $D_w^+$. In particular, $D_a^+$ and $D_a^-$ list
the six elements through which assignments enter from fibers
in $L$, and $D_{b+1}^+$ and $D_{b+1}^-$ list the six possible
first elements reached upon returning to fibers in $L$.
We call these the \emph{entry triples} and \emph{exit triples},
respectively.

Thus $U$ consists of the four initial elements at each fiber
in $L$, together with the six entries of $D_a^+$ and $D_a^-$.
The exit elements already belong to $U$ because their coordinate
fibers are $b+1$ and $b+2$, both in $L$. We now omit every element
of $\mathcal X\setminus U$.

\begin{proposition}[The second omission]
\label{prop:interval-suppression}
Every cycle of $\rho_{\mathcal X}$ meets $U$. If an initial element at a fiber in $L$ has its image under
$\rho_{\mathcal X}$ at a fiber in $[a,b]$, that image belongs
to an entry triple. The induced permutation $\rho_U$
maps the entries of $D_a^+$ bijectively onto those of $D_{b+1}^+$,
and likewise for the negative triples. At every other argument
$x\in U$, we have $\rho_U(x)=\rho_{\mathcal X}(x)$.
\end{proposition}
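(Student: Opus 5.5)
The plan is to use the explicit description of $\rho_{\mathcal X}$ on the fibers in $[a,b]$ given at the end of Proposition~\ref{prop:first-omission}. Proposition~\ref{prop:ordinary-interval-support} guarantees that these formulas apply at every $w\in[a,b]$. Every element of $\mathcal X$ at a fiber $w\in[a,b]$ with positive coordinate argument is mapped by $\rho_{\mathcal X}$ to an element with positive coordinate argument at fiber $w+1$ or $w+2$. The same holds with negative arguments, mapping to $-(w+1)$ or $-(w+2)$. Hence the fiber index strictly increases along $\rho_{\mathcal X}$ while we stay inside $[a,b]$. So within at most $b-a+1$ steps every orbit leaves $[a,b]$, and it leaves into a fiber $b+1$ or $b+2$, which lies in $L$. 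Every element of $\mathcal X$ at a fiber of $L$ is an initial element at that fiber, hence in $U$. So every forward orbit meets $U$, and every cycle of $\rho_{\mathcal X}$ meets $U$.

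For the claim about images of initial elements at fibers in $L$, I would reuse the counting argument preceding the statement. On positive arguments, the images of the elements at fibers in $[a,b]$ that remain in $[a,b]$ are exactly the displayed list $\X_N(a+1),\dots,\X_N(b)$ and $\X_{-N}(a+2),\dots,\X_{-N}(b)$. Since $\rho_{\mathcal X}$ is injective, any element of $\mathcal X$ at a fiber in $[a,b]$ whose preimage lies in $L$ must be outside this list, so it is one of $\X_N(a),\X_{-N}(a),\X_{-N}(a+1)$, i.e. an entry of $D_a^+$. The negative case gives $D_a^-$. I also need to rule out that some element of $L$ maps into the interior of the positive-argument part from the negative side or vice versa. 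This is automatic from the same injectivity count, since the list is all of the positive-argument elements at fibers in $[a,b]$ except the three entries.

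Next I would compute $\rho_U$. For $x\in U$ an initial element at a fiber of $L$ whose $\rho_{\mathcal X}$-image lies in $L$, we get $\rho_U(x)=\rho_{\mathcal X}(x)$ immediately. If the image lies in $[a,b]$, it is an entry element, which is in $U$, so again $\rho_U(x)=\rho_{\mathcal X}(x)$.

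For an entry element of $D_a^+$, iterate $\rho_{\mathcal X}$. All intermediate elements sit at fibers in $[a,b]$ and are not entries, because entries have preimages in $L$ and each element has exactly one preimage. Intermediate elements are therefore outside $U$. The first element outside $[a,b]$ has positive argument and, by the formulas, is one of $\X_N(b+1),\X_{-N}(b+1),\X_{-N}(b+2)$, which are the entries of $D_{b+1}^+$. This defines a map from $D_a^+$ into $D_{b+1}^+$, and it is injective because $\rho_U$ is a permutation, so it is a bijection between three-element sets. The negative case follows by the same argument, or by applying the inversion symmetry, since the formulas for $-w$ are entrywise inverses.

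The main subtlety is bookkeeping at the right end. One must check that the formulas at $w=b$ and $w=b-1$ only ever exit to exactly those three elements, using $b+2\le q-1$ from Proposition~\ref{prop:ordinary-interval-support}. One must also check that $w_0\notin[a,b+2]$, so that fibers $b+1,b+2$ have $N$ as first pair, which makes the exit elements genuinely initial elements and hence in $U$.
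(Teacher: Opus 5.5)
Your proposal is correct and follows essentially the paper's own route: fiber indices strictly increase inside $[a,b]$, an injectivity count identifies the entry elements, and first exits are tracked, with bijectivity of $\rho_U$ giving the entry-to-exit bijections. The one minor difference is how you rule out re-entering an entry element: you use uniqueness of preimages (each entry's preimage lies at a fiber in $L$), whereas the paper checks it directly from the increasing coordinate arguments and the fact that the image at $a+1$ from $a$ is $\X_N(a+1)$ rather than $\X_{-N}(a+1)$.
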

\begin{proof}
An element of $\mathcal X\setminus U$ belongs to a coordinate
fiber $w\in[a,b]$. As long as its successive images remain at
fibers in $[a,b]$, their fiber indices increase by one or two
at each application of $\rho_{\mathcal X}$. They therefore
eventually reach a fiber in $L$, whose initial elements all
belong to $U$. Hence every cycle meets $U$. The preceding
injectivity argument proves the claim about images of elements
at fibers in $L$.

Starting at an entry of $D_a^+$, the next element of $U$ reached
cannot be another entry. The two entries at coordinate argument
$a$ cannot be reached because the coordinate arguments increase.
The third entry is $\X_{-N}(a+1)$, whereas an image at $a+1$
from $a$ is $\X_N(a+1)$. All subsequent coordinate arguments
are larger than $a+1$. The first image outside the fibers
$[a,b]$ is one of
$$
\X_N(b+1),\qquad \X_{-N}(b+1),\qquad \X_{-N}(b+2),
$$
which are precisely the entries of $D_{b+1}^+$. The negative
coordinate arguments give the corresponding conclusion for
$D_a^-$ and $D_{b+1}^-$. Since the induced map is a permutation
of $U$, the three images in each case are distinct, proving the
two bijections. Finally, every other $x\in U$ belongs to a
fiber in $L$, and its image either belongs to a fiber in $L$
or is an entry element. Thus $\rho_{\mathcal X}(x)\in U$,
so $\rho_U(x)=\rho_{\mathcal X}(x)$.
\end{proof}

\subsection{Determining the images of the entry elements}
\label{subsec:transfer-pair}

Proposition~\ref{prop:interval-suppression} gives
$\rho_U(x)=\rho_{\mathcal X}(x)$ for every initial element
$x$ at a fiber in $L$. We now determine which exit element
is first reached from each of the six entry elements and
which choices of these images can be obtained by sign switches.

For $a\leq w\leq b$, consider how the entries of $D_w^+$ reach
the next triple $D_{w+1}^+$. Applying $\rho_{\mathcal X}$ to
$\X_N(w)$ and $\X_{-N}(w)$ gives two entries of $D_{w+1}^+$.
The third element, $\X_{-N}(w+1)$, already belongs to both
triples, so no application of $\rho_{\mathcal X}$ is needed
for it to reach the next triple. This gives one of the following
two bijections from the entries of $D_w^+$ to those of
$D_{w+1}^+$.
$$
\begin{array}{c|cc}
\text{element}&\text{first image}&\text{second image}\\ \hline
\X_N(w)&\X_N(w+1)&\X_{-N}(w+2)\\
\X_{-N}(w)&\X_{-N}(w+2)&\X_N(w+1)\\
\X_{-N}(w+1)&\X_{-N}(w+1)&\X_{-N}(w+1).
\end{array}
$$

The third row records zero applications of $\rho_{\mathcal X}$.
If $w<b$, the next bijection applies $\rho_{\mathcal X}$ to
this element, since its coordinate argument is $w+1$. If $w=b$,
it is already an exit element. Composing the bijections for
$w=a,\ldots,b$ therefore gives the first exit reached from
each entry, even if the paths require different numbers of
applications of $\rho_{\mathcal X}$.

\begin{definition}[Bijections across the interval]
\label{def:interval-transfer}
Let $\varphi_w^+$ be the bijection from the entries of $D_w^+$
to those of $D_{w+1}^+$ described above. Define $\varphi_w^-$
from the entries of $D_w^-$ to those of $D_{w+1}^-$ in the
same way, applying $\rho_{\mathcal X}$ to the two entries at
coordinate argument $-w$ and leaving $\X_N(-w-1)$ unchanged.
Put
$$
p_+=(\varphi_b^+\circ\cdots\circ\varphi_a^+)^{-1},\qquad
p_-=(\varphi_b^-\circ\cdots\circ\varphi_a^-)^{-1}.
$$
We call $(p_+,p_-)$ the \emph{transfer pair}. These bijections
record the inverse correspondences, from the positive and negative
exit triples to their respective entry triples.
\end{definition}

The images under $\rho_U$ go from entries to exits, so
$$
\begin{aligned}
\rho_U(x)&=p_+^{-1}(x)&&\text{for an entry of }D_a^+,\\
\rho_U(x)&=p_-^{-1}(x)&&\text{for an entry of }D_a^-.
\end{aligned}
$$
Together with the assignments at fibers in $L$, these determine
$\rho_U$.

We now determine which pairs $(p_+,p_-)$ can be obtained.
At a fixed fiber $w$, the same sign-switch choice determines
both $\varphi_w^+$ and $\varphi_w^-$. We therefore examine
these two bijections together. In the first bijection in the table, positions $1,2,3$ have
images in positions $1,3,2$ of the next triple. In the second,
their images have positions $3,1,2$. Thus the inverse
bijections act on positions as $(23)$ and $(123)$, respectively.
Proposition~\ref{prop:first-omission} gives the corresponding
choices on the negative triples. The possibilities for
$((\varphi_w^+)^{-1},(\varphi_w^-)^{-1})$ are
$$
\begin{array}{c|cc}
&\text{first choice}&\text{second choice}\\ \hline
\chi_w=-1&((23),(23))&((123),(123))\\
\chi_w=+1&((23),(123))&((123),(23)).
\end{array}
$$
At each fiber, a sign switch changes from one column to the
other, but the product of the two signs is fixed by $\chi_w$,
restricting the signs of $p_+$ and $p_-$.
For a bijection between ordered triples, let its sign mean the
sign of its permutation of positions. For
$J\subseteq\{1,\ldots,q-1\}$, write
$N_+(J)=|\{w\in J\mid\chi_w=+1\}|$.

\begin{proposition}[Parity of the interval bijections]
\label{prop:interval-coset}
The transfer pair satisfies
$$
\sgn(p_+)\sgn(p_-)=(-1)^{N_+([a,b])}.
$$
\end{proposition}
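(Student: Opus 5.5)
The proof is a sign computation on the composition defining the transfer pair. By Definition~\ref{def:interval-transfer}, $p_+=(\varphi_a^+)^{-1}\circ\cdots\circ(\varphi_b^+)^{-1}$, and similarly for $p_-$. Each $(\varphi_w^\pm)^{-1}$ is a bijection between ordered triples, so its sign is the sign of its induced permutation of positions $\{1,2,3\}$. Composing bijections of ordered triples composes these position permutations. Hence the sign is multiplicative:
$$
\sgn(p_+)=\prod_{w=a}^{b}\sgn\bigl((\varphi_w^+)^{-1}\bigr),\qquad
\sgn(p_-)=\prod_{w=a}^{b}\sgn\bigl((\varphi_w^-)^{-1}\bigr).
$$
It follows that
$$
\sgn(p_+)\sgn(p_-)=\prod_{w=a}^{b}\sgn\bigl((\varphi_w^+)^{-1}\bigr)\sgn\bigl((\varphi_w^-)^{-1}\bigr).
$$

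Next, evaluate each factor from the table preceding the proposition. Whichever sign-switch choice is made at $w$, the pair $((\varphi_w^+)^{-1},(\varphi_w^-)^{-1})$ behaves as follows.
\begin{itemize}
\item When $\chi_w=-1$, the pair is $((23),(23))$ or $((123),(123))$. The two entries have equal sign, so the product is $+1$.
\item When $\chi_w=+1$, the pair is $((23),(123))$ or $((123),(23))$. One entry is a transposition and the other a $3$-cycle, so the product is $-1$.
\end{itemize}
Thus each factor equals $-\chi_w$ restricted to $\{\pm1\}$, namely $(-1)^{[\chi_w=1]}$. Multiplying over $w\in[a,b]$ gives $(-1)^{N_+([a,b])}$.

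The main obstacle is justifying the table itself, and in particular the negative-triple column. The positive bijection is read directly off Proposition~\ref{prop:first-omission}, which applies on $[a,b]$ by Proposition~\ref{prop:ordinary-interval-support}. The first image sends positions $1,2,3$ to positions $1,3,2$, and the second sends them to $3,1,2$. The inverses are therefore $(23)$ and $(123)$.

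For $D_w^-=(X_{-N}(-w),X_N(-w),X_N(-w-1))$, the formulas in Proposition~\ref{prop:first-omission} for the upper signs give $X_{\chi_wN}(-w)\mapsto X_{-N}(-w-1)$ and $X_{-\chi_wN}(-w)\mapsto X_N(-w-2)$.
\begin{itemize}
\item If $\chi_w=-1$, then $X_{-N}(-w)\mapsto X_{-N}(-w-1)$, which is position $1$ of $D_{w+1}^-$, and $X_N(-w)\mapsto X_N(-w-2)$, which is position $3$. This is the pattern $1,3,2$, with inverse $(23)$, matching the positive side.
\item If $\chi_w=+1$, the images swap, giving the pattern $3,1,2$, with inverse $(123)$.
\end{itemize}
The lower signs flip both columns simultaneously. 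This confirms the table, so the product computation above holds for every sign-switch choice, and the proposition follows.
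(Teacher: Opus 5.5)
Your proposal is correct and follows the paper's proof. The sign of each $p_\pm$ is the product of the local signs, and at each fiber the two local signs agree exactly when $\chi_w=-1$, because $(23)$ is odd and $(123)$ is even; multiplying over $[a,b]$ gives $(-1)^{N_+([a,b])}$. Your explicit derivation of the negative-triple column from Proposition~\ref{prop:first-omission} is also correct; it verifies the table that the paper simply cites.
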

\begin{proof}
The transposition $(23)$ is odd and the cycle $(123)$ is even.
Thus, at the fiber $w$, the product of the two signs is $+1$
when $\chi_w=-1$ and $-1$ when $\chi_w=+1$, whichever choice
is made. The sign of a composition is the product of the signs,
so multiplying over $w\in[a,b]$ gives the result.
\end{proof}

For each of the six bijections $p_+$, exactly three bijections
$p_-$ satisfy this identity. Thus eighteen pairs satisfy the
necessary parity condition. The next
lemma shows that all eighteen are possible if at least five
integers $w\in[a,b]$ satisfy $\chi_w=+1$.

\begin{lemma}[Five positive character values suffice]
\label{lem:five-positive}
If $N_+([a,b])\geq5$, sign switches at the fibers in $[a,b]$
realize every pair of bijections from $D_{b+1}^+$ to $D_a^+$
and from $D_{b+1}^-$ to $D_a^-$ satisfying the parity identity.
\end{lemma}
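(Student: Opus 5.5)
We model the problem inside $S_3\times S_3$, acting on positions of the ordered triples. By Definition~\ref{def:interval-transfer} and the table preceding Proposition~\ref{prop:interval-coset}, $(p_+,p_-)$ is a product, over $w\in[a,b]$ in the appropriate order, of pairs $((\varphi_w^+)^{-1},(\varphi_w^-)^{-1})$. Each factor is chosen independently by the sign switch at fiber $w$, as guaranteed by Lemma~\ref{lem:fiber-endpoint}. At fibers with $\chi_w=-1$ the factor is $g_1=((23),(23))$ or $g_2=((123),(123))$. At fibers with $\chi_w=+1$ it is $h_1=((23),(123))$ or $h_2=((123),(23))$. The reachable set is therefore the set of all products $c_b\cdots c_a$ with $c_w$ from the allowed pair at $w$. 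We must show this set is the whole parity coset $\{(x,y):\sgn x\,\sgn y=(-1)^{N_+}\}$, which has $18$ elements, whenever $N_+([a,b])\ge5$.

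The first step is to strip off the $\chi_w=-1$ fibers. Rather than discard them, note that at such a fiber we can always pick one of $g_1,g_2$. The cleanest route is to show that the $\chi_w=+1$ fibers alone already realize the full coset, shifted by any fixed prefix and suffix. Concretely, fix arbitrary choices at all $\chi_w=-1$ fibers. The reachable product then has the form $A_0 c_{w_1}A_1c_{w_2}\cdots c_{w_n}A_n$ with fixed $A_i$ of even total parity product and $c_{w_i}\in\{h_1,h_2\}$. Since left and right multiplication by fixed elements are bijections preserving the coset structure, it suffices to show that the set of such products covers a coset of the index-$2$ subgroup $E=\{(x,y):\sgn x=\sgn y\}$, i.e.\ has $18$ elements.

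The key computation is to note that $h_1h_2^{-1}=((23)(132),(123)(23))$. Both coordinates are transpositions, and these generate enough to fill $E$ under conjugation by the $A_i$. A more robust approach is a growth argument. Let $R_n$ be the set reachable using the first $n$ positive fibers, with intermediate fixed factors absorbed. Then $R_{n+1}=R_nA h_1\cup R_nAh_2=(R_nA)h_1\cup (R_nA)h_2$, so $|R_{n+1}|\ge|R_n|$. We want strict growth until $18$. Equality $|R_{n+1}|=|R_n|$ forces $R_nA$ to be stable under right multiplication by $h_2h_1^{-1}$ (up to the translate). In other words, $R_nA$ is a union of left cosets of the cyclic subgroup generated by $k=h_1^{-1}h_2$, conjugated appropriately. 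Since $|R_1|=2$, reaching $18$ in five steps needs roughly the growth $2,4,8,\dots$, capped. This is where the count $5$ enters.

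The main obstacle is this growth step. The fixed interleaving factors $A_i$ vary with the $\chi_w=-1$ choices, so the relevant elements $A^{-1}h_1^{-1}h_2A$ are conjugates of $k$ by elements of $\langle g_1,g_2\rangle$. I would handle this by a direct finite check. We have $k=h_1^{-1}h_2=((23)(123),(132)(23))$, a pair of transpositions; its conjugates by the $g$'s remain pairs of transpositions in $E$. Any product of $\le 5$ independent binary choices among such pairs is then enumerated by hand, or by the certificate code, to confirm that the $2^5=32$ words cover all $18$ elements of the target coset for every pattern of interleaving. A cleaner fallback is to use the freedom at the $\chi_w=-1$ fibers to set all $A_i$ trivial or constant. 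Since only existence is needed, we may choose $g_1$ at every negative fiber, making each $A_i$ a power of $g_1$, which is an involution. This leaves finitely many patterns (each $A_i\in\{1,g_1\}$), and a case check over the $2^4$ patterns between five consecutive positive fibers, with the remaining positive fibers fixed, finishes the argument.
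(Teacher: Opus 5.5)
Your proposal has a genuine gap. It reduces the lemma to a finite check that you never carry out, and the concrete reduction you propose is false. The growth argument is not completed; you only say that strict growth is wanted. The fallback of fixing $g_1=((23),(23))$ at every fiber with $\chi_w=-1$ cannot work.

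Take the pattern in which consecutive positive fibers are separated by an odd number of negative fibers, so every interleaving factor is $A_i=g_1$. This is one of your own $2^4$ patterns, and the lemma assumes only $N_+([a,b])\geq5$, so it must hold for it. Then $h_1g_1=(1,r)$ and $h_2g_1=(r,1)$, with the same transposition $r=(123)(23)$ in both. These generate only the Klein four-group $\{1,r\}^2$. Hence
$$c_1g_1c_2g_1c_3g_1c_4g_1c_5=(c_1g_1)(c_2g_1)(c_3g_1)(c_4g_1)\,c_5\in\{(1,1),(r,r)\}\cdot\{h_1,h_2\}.$$
That is at most $4$ of the $18$ required pairs, and adding more positive fibers with $g_1$-gaps does not help. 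The same example refutes your earlier claim that, after fixing arbitrary choices at the $\chi_w=-1$ fibers, the positive fibers alone cover the coset. It also shows that your sequence $|R_n|$ can stall at $4$.

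The missing idea is that the negative fibers must be varied as well. The paper identifies the three positions with $\F_3$, so each local inverse bijection is $t\mapsto et+\tfrac{1+e}{2}$. The composites are then $p_+(t)=\sigma_nt+A$ and $p_-(t)=\tau_nt+B$, where
$$\tau_j=(-1)^{c_j}\sigma_j,\qquad A=\tfrac{1+\sigma_n}{2}+\sum_{j<n}\sigma_j,\qquad B=\tfrac{1+\tau_n}{2}+\sum_{j<n}(-1)^{c_j}\sigma_j.$$
Every intermediate partial product $\sigma_j$ with $j<n$ is a free parameter, via $e_j=\sigma_{j-1}\sigma_j$, whether fiber $j$ is positive or negative. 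Changing one $\sigma_j$ flips the choices at fibers $j$ and $j+1$, and fiber $j+1$ may well be negative.

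Taking $j$ at the first four positive fibers gives two free signs in $A-B$ (where $c_j$ is odd) and two in $A+B$ (where $c_j$ is even). Since $\pm1\pm1$ covers $\F_3$, all nine pairs $(A,B)$ occur for each $\sigma_n$. In your $g_1$ example, each negative fiber forces $\sigma_{j+1}=-\sigma_j$ with $c_{j+1}=c_j$. This cancels exactly these contributions, which is why a fixed uniform choice at the negative fibers fails.
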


The proof is given in Appendix~\ref{app:cycle-control-proofs}.
Combining these choices with those at fibers in $L$ gives the
following reduction of the cycle problem.

\begin{corollary}[Bounded cycle reduction]
\label{cor:bounded-cycle-reduction}
The permutations $\rho$ and $\rho_U$ have the same number of
cycles, and
$$
|U|=4|L|+6=
\begin{cases}
54,&h\equiv1\pmod3,\\
62,&h\equiv2\pmod3.
\end{cases}
$$
The images of $\rho_U$ are determined by the choice of bijection
from the four initial elements to the four elements of the last
exponent pair at each fiber in $L$, as given by
Lemma~\ref{lem:fiber-endpoint} or, at $1$,
Lemma~\ref{lem:fiber-one}, together with the transfer pair.
If $N_+([a,b])\geq5$, either bijection at every fiber in $L$
and any of the eighteen transfer pairs of the required parity
can be realized simultaneously by sign switches.
\end{corollary}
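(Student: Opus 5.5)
The corollary assembles the two omissions already established, so I would prove it in three short steps. The structure is $\rho\to\rho_{\mathcal X}\to\rho_U$.

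\emph{Step 1: the cycle counts agree.} Section~\ref{subsec:later-pairs} shows that every cycle of $\rho$ meets $\mathcal X$. Proposition~\ref{prop:interval-suppression} shows that every cycle of $\rho_{\mathcal X}$ meets $U$. By the induced-permutation facts quoted from \cite{MingoNica2004}, $\rho$ and $\rho_{\mathcal X}$ have the same number of cycles, and so do $\rho_{\mathcal X}$ and $(\rho_{\mathcal X})_U$. Since $(\rho_{\mathcal X})_U=\rho_U$, the permutations $\rho$ and $\rho_U$ have the same number of cycles.

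\emph{Step 2: the count $|U|=4|L|+6$.} The set $U$ consists of the four initial elements at each fiber of $L$ together with the six entries of $D_a^\pm$. These entries lie at the fibers $a$ and $a+1$, which belong to $[a,b]$ and hence not to $L$, so they are disjoint from the initial elements at fibers of $L$. The entries of $D_a^+$ are distinct by the injectivity in Proposition~\ref{prop:excludedcoord} and $N\neq -N$. Those of $D_a^-$ have negative arguments, and $w\neq -w$, so they are distinct from the entries of $D_a^+$. Proposition~\ref{prop:ordinary-interval-support} gives $|L|=12$ or $14$, so $|U|=54$ or $62$.

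\emph{Step 3: what determines $\rho_U$.} Proposition~\ref{prop:interval-suppression} gives $\rho_U(x)=\rho_{\mathcal X}(x)$ for each initial element $x$ at a fiber in $L$. By Proposition~\ref{prop:first-omission}, $\rho_{\mathcal X}(x)$ is a fixed function of the endpoint reached in the last exponent pair. At fibers $w\le q-2$ this function comes from Lemma~\ref{lem:K-to-N-assignments}; at $q-1$ it comes from Lemma~\ref{lem:replacement-assignments}(a). The endpoint itself is given by Lemma~\ref{lem:fiber-endpoint}, or by Lemma~\ref{lem:fiber-one} at the fiber $1$. On the six entries, $\rho_U=p_\pm^{-1}$. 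Hence $\rho_U$ is determined by the endpoint bijections at the fibers of $L$ together with the transfer pair.

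For simultaneous realizability, note that Lemmas~\ref{lem:fiber-endpoint} and~\ref{lem:fiber-one} make each of the two bijections at a fiber attainable by sign switches at that fiber alone. Lemma~\ref{lem:five-positive} realizes any of the eighteen parity-admissible transfer pairs using sign switches only at fibers in $[a,b]$. Since $L$ and $[a,b]$ are disjoint, these choices are independent, and they never touch the fixed first transition at the fiber $1$ from Construction~\ref{con:zerocoordinatereplacements}.

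The step needing the most care is this independence claim. Each transfer map $\varphi_w^\pm$ depends only on the sign switches at the fiber $w$, and $p_\pm$ depends only on these maps. The initial elements at $L$-fibers depend only on choices at their own fibers, including the translation and zero-coordinate paths, which are fixed. I would verify that no sign switch at a fiber in $L$ alters any $\varphi_w^\pm$, and none in $[a,b]$ alters an $L$-fiber endpoint bijection. Everything else is bookkeeping.
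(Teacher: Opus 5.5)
Your proposal is correct and follows the paper's proof essentially step for step. It uses the two omissions $\rho\to\rho_{\mathcal X}\to\rho_U$ via $(\rho_{\mathcal X})_U=\rho_U$, the count from the disjointness of the $L$-fiber initial elements and the six entries, the determination of $\rho_U$ through Propositions~\ref{prop:first-omission} and~\ref{prop:interval-suppression} together with $p_\pm^{-1}$, and simultaneous realizability from Lemmas~\ref{lem:fiber-endpoint}, \ref{lem:fiber-one}, and~\ref{lem:five-positive} because $L$ and $[a,b]$ are disjoint. Your extra distinctness checks for $|U|$ and your independence remark spell out what the paper states in one line each; the independence is already contained in the ``independently of the choices at all other fibers'' clauses of those lemmas.
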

\begin{proof}
The argument in Subsection~\ref{subsec:later-pairs} shows that
every cycle of $\rho$ meets $\mathcal X$, so the first omission
preserves the number of cycles.
Proposition~\ref{prop:interval-suppression} gives the same
conclusion for the second omission. The four initial elements
at each fiber in $L$ and the six entries of $D_a^+$ and $D_a^-$
are distinct, giving the displayed size of $U$. At an entry element, the image under $\rho_U$ is given by
$p_+^{-1}$ or $p_-^{-1}$. At an initial element $x$ of a
fiber in $L$, the chosen bijection determines which element
of the last exponent pair is reached.
Proposition~\ref{prop:first-omission} then determines
$\rho_{\mathcal X}(x)$, and
Proposition~\ref{prop:interval-suppression} gives
$\rho_U(x)=\rho_{\mathcal X}(x)$.
Finally, the sign switches at fibers in $L$ are independent of
those at fibers in $[a,b]$. Lemmas~\ref{lem:fiber-endpoint},
\ref{lem:fiber-one}, and~\ref{lem:five-positive} therefore
realize all the stated choices simultaneously.
\end{proof}

\section{Completing the bounded cycle problem}
\label{sec:bounded-cycle-solution}

We now choose the bijections at the coordinates in $L$ and the transfer pair so that $\rho_U$ is a single cycle. Corollary~\ref{cor:bounded-cycle-reduction} allows us to make these choices independently, given that the interval contains at least five coordinates with $\chi_w=+1$ and that the transfer pair has the required parity. We first show that there are enough such coordinates.

\begin{lemma}[Count of positive characters]\label{lem:positive-count}
Exactly $2M$ of the coordinates $w\in\{1,\ldots,q-1\}$ satisfy $\chi_w=+1$. Therefore $N_+([a,b])\geq2M-|L|\geq18$, and $N_+([a,b])$ and $N_+(L)$ have the same parity.
\end{lemma}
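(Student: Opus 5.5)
We count the $w\in\{1,\ldots,q-1\}$ with $\chi_w=\chi(1-4w^2)=+1$ by a character sum over the whole field and then discard the few exceptional arguments. Since $\chi(1-4w^2)$ depends only on $\pm w$, and $w\mapsto -w$ pairs $\F_h\setminus\{0\}$ into the fibers $1,\ldots,q$, we have
$$\sum_{x\in\F_h}\chi(1-4x^2)=\chi(1)+2\sum_{w=1}^{q}\chi_w .$$
The substitution $t=2x$ turns the left side into $\sum_{t\in\F_h}\chi(1-t^2)$. This is the standard quadratic sum $\sum_t\chi(at^2+bt+c)=-\chi(a)$ for a polynomial with nonzero discriminant. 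Here $a=-1$, and the discriminant $4\neq0$, so the sum equals $-\chi(-1)=1$, because $h\equiv3\pmod4$. If one prefers not to quote the formula, it also follows from counting the points of the conic $s^2+t^2=1$, of which there are $h-\chi(-1)=h+1$.

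Comparing the two expressions gives $\sum_{w=1}^{q}\chi_w=0$. The term $w=q$ corresponds to the totally excluded fiber $w=\pm1/2$, where $1-4w^2=0$ and $\chi_q=0$. Hence $\sum_{w=1}^{q-1}\chi_w=0$. For $1\le w\le q-1$ we have $4w^2\ne1$, so every $\chi_w$ is $\pm1$. With $q-1=4M$ terms summing to zero, exactly $2M$ of them equal $+1$.

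The two remaining claims then follow immediately. The set $\{1,\ldots,q-1\}$ is the disjoint union of $[a,b]$ and $L$, so
$$N_+([a,b])=2M-N_+(L)\ge 2M-|L|.$$
From $h\ge131$ and $h\equiv3\pmod 8$ we get $q\ge65$, so $M\ge16$. Since $|L|\le14$ by Proposition~\ref{prop:ordinary-interval-support}, this gives $N_+([a,b])\ge 32-14=18$. The parity claim holds because $N_+([a,b])+N_+(L)=2M$ is even.

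The only step requiring care is the evaluation of the quadratic character sum. The rest is bookkeeping, namely handling the zero at $w=\pm1/2$ and the doubling from the fiber pairing.
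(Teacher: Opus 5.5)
Your proof is correct, but it takes a different route from the paper. The paper evaluates no character sum. It uses the computation preceding Lemma~\ref{lem:fiber-endpoint}: $\chi_w=+1$ exactly when some exponent pair is absent at the fiber $w$, namely when $w=\pm v_k$ with $u^k=(1-2w)/(1+2w)$. It then invokes Proposition~\ref{prop:absentcoordinatefibers}: each of the $r=2M$ exponent pairs is absent from exactly one nonzero coordinate fiber, and distinct pairs have distinct absent fibers. So the fibers with $\chi_w=+1$ are in bijection with the exponent pairs. You instead compute $\sum_{t\in\F_h}\chi(1-t^2)=-\chi(-1)=1$, fold it over the fibers, and drop the zero term at the totally excluded fiber. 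You then conclude from $\sum_{w=1}^{q-1}\chi_w=0$ with every term equal to $\pm1$. The paper's route costs nothing extra, since it reuses the already-proved injectivity of $k\mapsto v_k$ and the identity $v_{-k}=-v_k$. It also explains structurally why the count is exactly $r$: the M\"obius map $w\mapsto(1-2w)/(1+2w)$ matches positive fibers with exponent pairs. Your route is self-contained and independent of the exponent-pair machinery. It needs only a standard quadratic character sum, or the point count $h+1$ of the conic $s^2+t^2=1$, in the same spirit as the paper's count in the proof of Lemma~\ref{lem:existenceofthereqexp}. In your argument the hypothesis $h\equiv3\pmod4$ enters visibly through $\chi(-1)=-1$. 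In the paper it is hidden in the well-definedness of every $v_k$, that is, $1+u^k\neq0$. Your deductions of $N_+([a,b])\geq2M-|L|\geq18$ and of the parity claim match the paper's.
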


\begin{proof}
By the calculation before Lemma~\ref{lem:fiber-endpoint}, $\chi_w=+1$ exactly when some exponent pair is absent at the fiber $w$. By Proposition~\ref{prop:absentcoordinatefibers}, each of the $r=2M$ exponent pairs is absent from exactly one nonzero coordinate fiber, and distinct pairs are absent from distinct fibers. Hence exactly $2M$ coordinates have $\chi_w=+1$. The interval and $L$ partition $\{1,\ldots,q-1\}$, so
$$
N_+([a,b])=2M-N_+(L)\geq2M-14\geq18
$$
because $M\geq16$, and the two parities agree because $2M$ is even.
\end{proof}

Thus, by Lemma~\ref{lem:five-positive} and Proposition~\ref{prop:interval-coset}, sign switches in the interval realize every transfer pair satisfying
$$
\sgn(p_+)\sgn(p_-)=(-1)^{N_+(L)}.
$$

The set $U$ has $54$ or $62$ elements depending on the residue of $h$ modulo $3$. Fix one of the two cases of $h$ modulo $3$. We use the
expressions for the elements of $U$ as labels so that we can
describe $\rho_U$ for every prime in this case on one fixed
set. For example, $\X_N(q-1)$ is one label, although the
element it denotes changes with $h$. Let $\Omega$ consist
of the labels for the four initial elements at each fiber
in $L$ and the six entry elements.

\begin{proposition}[Permutations on a fixed set]
\label{prop:canonical-transfer}
For each $w\in L$, choose a sign $\chi_w\in\{\pm1\}$ and
one of the two bijections of Lemma~\ref{lem:fiber-endpoint}
or, at $w=1$, Lemma~\ref{lem:fiber-one}. Also choose
bijections $p_+$ from the entries of $D_{b+1}^+$ to those
of $D_a^+$ and $p_-$ from the entries of $D_{b+1}^-$ to
those of $D_a^-$. Define $\Pi$ on $\Omega$ by the following
rules.
\begin{enumerate}
\item At a label for an initial element of a fiber
$w\in L$, use the chosen bijection with the assigned sign
$\chi_w$, followed by the assignments of
Proposition~\ref{prop:first-omission}. At the fiber $1$,
use the assigned sign in place of $\chi(-3)$.
\item At a label in $D_a^+$, use $p_+^{-1}$, and at a
label in $D_a^-$, use $p_-^{-1}$.
\end{enumerate}
Then $\Pi$ is a permutation of $\Omega$ for every choice
above. For a prime $h\geq131$ in the fixed case, use its
actual signs $\chi_w=\chi(1-4w^2)$, its chosen bijections,
and its transfer pair. The bijection sending each element
of $U$ to its label then identifies every assignment of
$\rho_U$ with the corresponding assignment of $\Pi$.
In particular, $\rho_U$ and $\Pi$ have the same number
of cycles.
\end{proposition}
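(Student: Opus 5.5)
The plan is to prove the two claims separately: first that $\Pi$ is a permutation of $\Omega$ for every formal choice, and then that it coincides with $\rho_U$ under the labeling map.

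\textbf{$\Pi$ is a permutation.} The key observation is that the formulas defining $\Pi$ are symbolic in the labels. They are compositions of label-level bijections:
\begin{enumerate}
\item the bijection from the four initial labels at $w$ to the four last-pair labels (Lemma~\ref{lem:fiber-endpoint}, or Lemma~\ref{lem:fiber-one}), which is a bijection for either sign $\chi_w$;
\item the outgoing assignments of Lemma~\ref{lem:K-to-N-assignments} with the tabulated $(y_w,z_w)$ of Construction~\ref{con:transitions};
\item the translation correspondence $X_{\epsilon E}(\tau(q-1))\mapsto X_{\epsilon E}(-\tau w_0)$ at $q-1$;
\item $p_\pm^{-1}$ on the entry triples.
\end{enumerate}
Each of these is injective on labels independently of the actual field values. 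So $\Pi$ is injective on the initial labels at each fiber and on each entry triple.

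It then remains to check two things at the level of labels:
\begin{enumerate}
\item The images of distinct fibers, and of the entry triples, are pairwise disjoint.
\item Every image label lies in $\Omega$: either as an initial label at a fiber of $L$, an entry label of $D_a^\pm$, or an exit label, which is by definition an initial label at $b+1$ or $b+2\in L$.
\end{enumerate}
For this I would list the images label-by-label. The $K$-to-$N$ targets $X_{\pm N}(\pm z_w),X_{\pm N}(\pm y_w)$ for $w\in L\setminus\{q-1\}$ are distinct formal coordinates by the coverage argument in Proposition~\ref{prop:transition-coverage}, which was itself a statement about the symbolic table. Those landing at coordinate argument $\pm a$ or $\pm(a+1)$ are exactly the entry labels, as in Proposition~\ref{prop:interval-suppression}. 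The $q-1$ fiber maps onto the four initial labels at $w_0$, which the $K$-to-$N$ images avoid.

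Since $\Omega$ is finite and $\Pi$ maps it injectively into itself, $\Pi$ is a permutation. The genuinely finite check is that the exceptional $(y_w,z_w)$ values send $L$-fibers to $L$-fibers or entry labels. I expect this bookkeeping to be the main obstacle. It is routine but must be done in both residue classes modulo $3$, treating $q$ as a formal symbol; since $q\ge65$, expressions like $q-9$ and $4$ never coincide.

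\textbf{Identification with $\rho_U$.} For an actual prime $h\ge131$ in the fixed case, the labeling map $U\to\Omega$ is a bijection, since the $4|L|+6$ elements of $U$ are distinct (Corollary~\ref{cor:bounded-cycle-reduction}). The identification is then read off from the earlier lemmas:
\begin{enumerate}
\item At initial elements of fibers in $L$, Proposition~\ref{prop:interval-suppression} gives $\rho_U=\rho_{\mathcal X}$, and Proposition~\ref{prop:first-omission} computes $\rho_{\mathcal X}$ by exactly the rule 1 formulas, with $\chi_w=\chi(1-4w^2)$. At $w=1$ the sign is $\chi(-3)=\chi(1-4\cdot1^2)$.
\item At entry elements, $\rho_U=p_\pm^{-1}$ by the discussion after Definition~\ref{def:interval-transfer}.
\end{enumerate}
Hence the labeling conjugates $\rho_U$ to $\Pi$, and conjugate permutations have the same number of cycles. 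This second half also gives an independent confirmation that the first half holds for the actual signs.
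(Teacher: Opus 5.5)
Your argument is correct, and the identification half matches the paper. You prove that $\Pi$ is a permutation for every choice by a different route.

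\textbf{Your route.} You verify injectivity directly on labels: injectivity within each block, plus pairwise disjointness of the block images. In effect you re-derive, at the label level, the coverage argument of Proposition~\ref{prop:transition-coverage} and the entry analysis of Proposition~\ref{prop:interval-suppression}.

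\textbf{The paper's route.} The paper skips that bookkeeping and bootstraps from an actual prime.
\begin{enumerate}
\item For an actual prime in the fixed case, the identification with $\rho_U$ already shows that $\Pi$ is a permutation for the actual data.
\item Changing a sign $\chi_w$, a fiber bijection, or $p_\pm$ only permutes the images inside one block. A block is the four images of the initial labels at a fiber of $L$, or the three images of an entry triple.
\item The set of images of each block does not depend on the prime or on the choices. So these sets stay pairwise disjoint with union $\Omega$, and $\Pi$ is a permutation for every choice.
\end{enumerate}
Your closing remark is half of this argument. Adding the block observation would have let you drop the label-by-label check entirely.

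\textbf{Trade-off.} Your route is self-contained and does not rely on a prime $h\ge131$ existing in each residue class. (Such primes do exist, e.g.\ $131$ and $139$.) The cost is the explicit verification you flag but do not carry out.

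\textbf{Normalizing labels.} Use the relation $2q=-1$, as the paper does in its recomputed tables. For example, when $h\equiv1\pmod3$, $z_3=q+3$ becomes $-(q-2)$ and $y_{q-2}=q+8$ becomes $-(q-7)$. If $q$ is treated as a free formal symbol, these images would appear not to lie in $\Omega$, and your check ``every image label lies in $\Omega$'' would fail spuriously.
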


The proof is given in Appendix~\ref{app:canonical-verification}.
There are $12$ character signs in the first case and $14$
in the second, giving $2^{12}$ or $2^{14}$ assignments
of signs. Every prime supplies one of these assignments.
Checking all of them therefore covers every prime, even
if some assignments never arise from a prime.

\begin{proposition}[Finite one-cycle verification]
\label{prop:finite-joining}
For every assignment of the character signs in $L$, there exists a choice of bijections at the fibers in $L$ and a
transfer pair satisfying
$$
\sgn(p_+)\sgn(p_-)=(-1)^{N_+(L)}
$$
so that $\Pi$ is a single cycle. It suffices to use the
twelve combinations of bijection choices listed below
when $h\equiv1\pmod3$, and the thirty-two listed below
when $h\equiv2\pmod3$.
\end{proposition}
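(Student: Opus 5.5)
The statement is a finite claim about explicitly defined permutations $\Pi$ on a fixed label set $\Omega$ of size $54$ or $62$, so the plan is to prove it by an exhaustive but structured verification. I would first write $\Pi$ out symbolically. Proposition~\ref{prop:first-omission} and Lemma~\ref{lem:K-to-N-assignments}, together with the exceptional values $(y_w,z_w)$ of Construction~\ref{con:transitions} for $w\in L$, give each image explicitly. For an initial label at $w\in L$, the image depends only on three things: the bit selecting the first or second bijection of Lemma~\ref{lem:fiber-endpoint} (or of Lemma~\ref{lem:fiber-one} at $w=1$), the sign $\chi_w$, and the fixed exceptional data. Lemma~\ref{lem:replacement-assignments}(a) handles $w=q-1$ by sending the last $E$-elements to the initial elements at $w_0$. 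The six entry labels are sent to exit labels by $p_\pm^{-1}$. This yields a table of $\Pi$ parametrized by $2^{|L|}$ sign vectors, $2^{|L|}$ bijection bits, and $18$ admissible transfer pairs.

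The key reduction is that the transfer pair can be chosen after everything else. I would fix the sign vector and the bijection bits and delete the six entry labels, which gives a partial map whose orbits break into a few paths. Each path starts at an entry element of $D_a^\pm$ and ends at an exit element of $D_{b+1}^\pm$ (these exits lie at fibers $b+1,b+2\in L$). There may also be some closed cycles lying entirely inside $L$. Choosing $p_\pm$ then amounts to matching the three positive exits to the three positive entries, and likewise for the negative ones. A closed cycle inside $L$ cannot be repaired this way, so the first requirement on the bijection bits is that no such closed cycle occurs. Once that holds, $\Pi$ is a single cycle exactly when the matching links all six paths into one loop. That is a question about pairs of permutations in $S_3\times S_3$ composed with a fixed path-connection pattern. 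I would check which of these pairs give one cycle and confirm that at least one of them satisfies the parity constraint $\sgn(p_+)\sgn(p_-)=(-1)^{N_+(L)}$.

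The main obstacle is the parity constraint. Without it, six paths can always be chained into one loop whenever no closed cycles exist. With it, the admissible pairs make up only half of $S_3\times S_3$, and I expect the parity of the resulting cycle structure to be tied to the bijection bits through a sign identity. For each sign vector, I would therefore look for bijection bits that both avoid closed cycles in $L$ and land on the correct parity class. Flipping a single bit at one fiber changes the pattern by a known transposition-like move, which is how the listed twelve (respectively thirty-two) bit combinations should arise. Since the dependence on the $\chi_w$ is local, I would group the sign vectors according to the few signs that actually affect connectivity, namely those at fibers where exceptional $z_w$ values occur. For each group I would exhibit one of the listed bit combinations and a transfer pair. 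The remaining $2^{12}$ or $2^{14}$ cases would be confirmed by the computer certificate in the repository, which iterates over all sign vectors and the listed bit combinations, tests each admissible transfer pair for a single cycle, and reports a witness for every sign vector.
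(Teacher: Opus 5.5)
The step that actually proves the statement is your last one, and it is exactly the paper's proof. By Proposition~\ref{prop:canonical-transfer}, $\Pi$ is a prime-independent permutation of the fixed set $\Omega$. Running over all $2^{12}+2^{14}$ sign vectors, the listed masks, and the eighteen parity-admissible transfer pairs is therefore an exhaustive check, and it settles the statement by itself. The structural reduction in front of it, however, contains false claims, and you should not use it to shrink or organize the computation.

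(i) Once the entry-to-exit assignments are removed, the chains run from exits to entries, not the reverse. Even ignoring parity, they cannot always be closed into one loop. Since $p_+$ and $p_-$ preserve the $\pm$ triples, if every chain joins a positive exit to a positive entry and a negative exit to a negative entry, then $\Pi$ has at least two cycles. The correct condition for fixed bits is that no closed cycle lies inside $L$ and at least one chain crosses between the two triples. An Eulerian-circuit argument on the two-vertex multigraph with vertices $\pm$ then produces suitable $p_\pm$.

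(ii) Parity is not the obstacle, and the bijection bits do not control it. Switching the bijection at one fiber composes $\Pi$ with two disjoint transpositions. Flipping one $\chi_w$ composes it with a single transposition. Replacing $p_+$ by $p_+'$ composes it with a permutation of sign $\sgn(p_+)\sgn(p_+')$, and likewise for $p_-$. Hence $\sgn(\Pi)(-1)^{N_+(L)}\sgn(p_+)\sgn(p_-)$ is a constant $C$ in each case. Since $|\Omega|\in\{54,62\}$ is even, a single cycle is an odd permutation. Once $C=-1$ is checked, every transfer pair that makes $\Pi$ one cycle automatically satisfies the parity identity.

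(iii) It is false that only the signs at exceptional fibers affect connectivity. Fix the bits and the transfer pair. Flipping any single $\chi_w$ with $w\in L$ composes $\Pi$ with a transposition, which changes its number of cycles by exactly one. So a witness for one sign vector is never a witness for a vector differing in one sign.

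None of this affects your conclusion, because the computation is exhaustive. But as stated, these reductions would mislead any attempt to replace that computation with hand analysis.
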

Order the coordinates of $L$ as in Proposition~\ref{prop:ordinary-interval-support}. Bit $i$, starting at $0$, selects the second endpoint bijection at the $(i+1)$-st coordinate when set. The masks are
$$
\begin{aligned}
\mathcal M_1={}&\{0,1,2,3,4,5,32,33,34,35,82,98\},\\
\mathcal M_2={}&\{0,1,3,4,5,32,33,36,37,38,64,65,67,68,69,\\
&\quad96,97,100,101,102,192,193,196,197,256,257,\\
&\quad260,261,384,385,388,389\}.
\end{aligned}
$$

\begin{proof}
The program
\href{https://github.com/SanderGi/Genus/blob/main/BipartiteKneser/verify_bounded_permutation.py}{\texttt{verify\_bounded\_permutation.py}}
checks all $2^{12}+2^{14}=20480$ sign assignments. For each input it tries the masks above and the eighteen transfer pairs of the required parity, then checks that the resulting bounded permutation is one cycle. Every input passes. The program uses the missing-fiber bit $(1+\chi_w)/2$ and the forward interval maps $p_+^{-1},p_-^{-1}$. Its values $q=41$ and $h=83$ only name the formal coordinates; Appendix~\ref{app:canonical-verification} proves that the bounded permutation is independent of the prime within each case. Thus the computation is exhaustive for the fixed finite model, not a sample of primes.
\end{proof}

We now make the choices left open in our complete admissible family.

\begin{theorem}[Generic one-cycle theorem]
\label{thm:generic-one-cycle}
For every prime $h\geq131$ with $h\equiv3\pmod8$, and for any permitted interior exponent-pair orders, sign switches can be chosen so that the permutation $\rho$ of the complete admissible family of Proposition~\ref{prop:thecompletefamily} is a single cycle.
\end{theorem}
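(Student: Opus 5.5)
The plan is to assemble the reductions of Sections~\ref{sec:cycle-control} and~\ref{sec:bounded-cycle-solution}. Fix a prime $h\geq131$ with $h\equiv3\pmod8$, fix the complete admissible family of Proposition~\ref{prop:thecompletefamily}, and fix arbitrary permitted interior exponent-pair orders. Let $\rho$ be the permutation for any choice of sign switches. By Corollary~\ref{cor:bounded-cycle-reduction}, $\rho$ and $\rho_U$ have the same number of cycles. So it suffices to choose sign switches making $\rho_U$ a single cycle.

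\textbf{Step 1: verify the hypothesis of Corollary~\ref{cor:bounded-cycle-reduction}.} Lemma~\ref{lem:positive-count} gives $N_+([a,b])\geq18\geq5$. Hence any choice of endpoint bijection at each fiber of $L$ (Lemma~\ref{lem:fiber-endpoint}, or Lemma~\ref{lem:fiber-one} at the fiber $1$) can be realized simultaneously by sign switches. The same holds for any transfer pair $(p_+,p_-)$ satisfying the parity identity of Proposition~\ref{prop:interval-coset}. By the parity statement in Lemma~\ref{lem:positive-count}, that identity reads $\sgn(p_+)\sgn(p_-)=(-1)^{N_+(L)}$.

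\textbf{Step 2: identify $\rho_U$ with the finite model.} Read off the actual character signs $\chi_w=\chi(1-4w^2)$ for $w\in L$, which form one of the $2^{12}$ or $2^{14}$ sign assignments. At the fiber $1$, the sign $\chi(-3)$ appearing in Lemma~\ref{lem:fiber-one} plays the role of the assigned sign, as stipulated in Proposition~\ref{prop:canonical-transfer}. Apply Proposition~\ref{prop:finite-joining} to this assignment. It supplies a mask of endpoint bijections on $L$ and a transfer pair of the required parity for which $\Pi$ is a single cycle.

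\textbf{Step 3: realize these choices and conclude.} By Step 1, these choices are realizable by sign switches in $\rho$, with the given interior orders untouched. The fixed transition at the fiber $1$ from Construction~\ref{con:zerocoordinatereplacements} is also untouched: Lemma~\ref{lem:fiber-one} asserts that both bijections there are attainable by the remaining switches. By Proposition~\ref{prop:canonical-transfer}, $\rho_U$ is then identified with $\Pi$ under the labeling bijection, so $\rho_U$ is a single cycle. Therefore $\rho$ is a single $d$-cycle.

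\textbf{Main obstacle.} The delicate point is the bookkeeping that the choices are genuinely independent and match the model. Three things must be checked:
\begin{itemize}
\item the sign used at fiber $1$ and the parity exponent $N_+(L)$ computed from the prime's actual signs agree with the inputs to the verified finite computation;
\item the sign switches at fibers of $L$ do not interact with those in $[a,b]$;
\item the interval hypotheses behind Proposition~\ref{prop:first-omission} hold, namely $(y_w,z_w)=(-w-2,w+1)$ on $[a,b]$ and $w_0\notin[a,b+2]$, as given by Proposition~\ref{prop:ordinary-interval-support}.
\end{itemize}
Once these are confirmed, the rest is direct citation.
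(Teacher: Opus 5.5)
Your proposal is correct and matches the paper's proof. The paper also feeds the prime's actual signs $\chi_w=\chi(1-4w^2)$ on $L$ into Proposition~\ref{prop:finite-joining}, uses Lemma~\ref{lem:positive-count} for $N_+([a,b])\geq5$ and the parity match, realizes the chosen bijections and transfer pair via Corollary~\ref{cor:bounded-cycle-reduction}, and concludes with Proposition~\ref{prop:canonical-transfer}; the only point the paper states that you leave implicit is that sign switches preserve every domain and range, so the family stays complete.
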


\begin{proof}
Fix any permitted interior exponent-pair orders. Sign switches preserve every domain and range, so the family stays complete. The values $\chi_w=\chi(1-4w^2)$, $w\in L$, determine an input of Proposition~\ref{prop:finite-joining} in the case of $h$, and we take the bijection choices and transfer pair supplied there. By Lemma~\ref{lem:positive-count}, this transfer pair lies in the parity class of the interval and $N_+([a,b])\geq5$, so Corollary~\ref{cor:bounded-cycle-reduction} realizes all these choices by sign switches. Proposition~\ref{prop:canonical-transfer} then gives
$$
\#\{\text{cycles of }\rho\}
=\#\{\text{cycles of }\rho_U\}
=\#\{\text{cycles of }\Pi\}=1.
$$
\end{proof}

\section{Exceptional primes and final assembly}
\label{sec:exceptional}

The primes below $131$ covered by the theorem are
$$
11,\ 19,\ 43,\ 59,\ 67,\ 83,\ 107.
$$
The file
\href{https://github.com/SanderGi/Genus/blob/main/BipartiteKneser/exceptional_certificates.json}{\texttt{exceptional\_certificates.json}}
contains full base cycles in $(c,k)$ coordinates for $h=11,19$ and the compact data below for the other five primes.
In the table, $A=u^a$, $B=-A-2=u^b$, and $\upsilon$ is
the least residue of $1+a-b$ modulo $q$. The outside mask
records the bijection choices at the fibers in $L$, in
increasing order, using $0$ for the first bijection and
$1$ for the second. The interval mask follows the interval
coordinates in increasing order, with a $1$ indicating
a sign switch.

The final column gives $(p_+^{-1},p_-^{-1})$ by listing
the positions of the images in the order of the entry
triples. Here $0$, $1$, and $2$ denote the first, second,
and third positions in the corresponding exit triple.
In the implementation, each sign switch is applied to
the last transition between exponent pairs in its
coordinate fiber.
$$
\begin{array}{c|c|c|c|c|r|r|c}
h&u&A&j&\upsilon&\text{outside mask}&\text{interval mask}&(p_+^{-1},p_-^{-1})\\ \hline
43&9&16&4&9&2&192&(120,102)\\
59&3&9&5&17&0&8192&(210,120)\\
67&4&16&6&13&2&786432&(120,210)\\
83&3&3&8&17&0&8388608&(210,021)\\
107&3&13&11&23&0&206158430208&(120,120)
\end{array}
$$

\begin{proposition}[Exceptional-prime certificates]
\label{prop:exceptional-verification}
For each prime above, $H(h,2)$ has a vertex-transitive orientable quadrangulation whose base permutation $\rho$ is a single $d$-cycle.
\end{proposition}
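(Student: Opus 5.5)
The proof will be a certificate verification reduced to the general machinery already established. The central fact is Corollary~\ref{cor:genus-criterion}. If a bijection $\rho:S\to S$ comes from a complete admissible family and is a single $d$-cycle, then Lemma~\ref{lem:completefamilyquad} gives an orientable quadrangulation and Lemma~\ref{lem:vertextransitivity} makes it vertex-transitive. So for each exceptional prime it suffices to exhibit such a $\rho$ and check the two conditions.

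\textbf{The primes $h=43,59,67,83,107$.} For these I will rebuild the family of Sections~4--5 from the compact table data. First, I will check that $u$ generates the nonzero squares. I will also check that $A=u^a$ and $B=-A-2=u^b$ give exponents $a,b$ and $\upsilon$ as in Lemma~\ref{lem:existenceofthereqexp}. Here $\upsilon=1+a-b$ should be odd, and $j$ should be the listed index with $\upsilon\in\{2j+1,q-2j-2\}$.

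The general proofs used $h\ge131$ at two places:
\begin{itemize}
\item in the bounds on the $K$-to-$N$ exceptional table of Construction~\ref{con:transitions}, namely $q\ge21$ or $q\ge29$;
\item in the distinctness of the translation indices $1,\ell-3,\ell-1,\ell+1$, where $M\ge16$ was used.
\end{itemize}
So the certificate program must directly verify these facts for each prime:
\begin{itemize}
\item every appended quadruple satisfies its closure equation $s_0s_1s_2s_3=1_\Gamma$;
\item every domain and every range has four distinct elements of $S$;
\item the domains partition $S$, and the ranges partition $S$.
\end{itemize}
Each of these is a finite computation in $\Gamma$ of order $hq$.

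The cycle structure is also checked directly. The outside mask fixes the bijections at the fibers in $L$, and the interval mask fixes the sign switches, each applied to the last transition of its fiber. With these choices $\rho$ becomes an explicit permutation of $S$. We then compute its cycle decomposition and confirm that it is one cycle of length $d=(h-2)(h-3)/2$.

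I will not rely on Lemma~\ref{lem:five-positive} or Proposition~\ref{prop:canonical-transfer}, since their hypotheses may fail for small $q$. For example, $N_+([a,b])\ge5$ may fail, or $L$ may overlap itself when $q$ is small. The listed transfer pair $(p_+^{-1},p_-^{-1})$ serves only as a cross-check that the induced permutation on $U$ agrees with the bounded model.

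\textbf{The primes $h=11,19$.} Here Lemma~\ref{lem:distinguishedexp} needs $h\ge19$, and for $h=11$ the distinguished exponents may coincide, since $q=5$ and $r=2$. The structured construction therefore breaks down. For these two primes the certificate instead lists the full base cycle $\rho$ in $(c,k)$ coordinates. The verification then checks:
\begin{itemize}
\item $\rho$ is a cyclic order on exactly the $d$ elements of $S$, with $d=28$ and $d=136$ respectively;
\item for every $s\in S$, the elements $s_1,s_2,s_3$ of Definition~\ref{def:permutationrho} satisfy \eqref{eq:Q1} and \eqref{eq:Q2}.
\end{itemize}
By Lemma~\ref{lem:quadface}, every facial walk is then a simple quadrilateral. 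Lemma~\ref{lem:vertextransitivity} again gives vertex-transitivity.

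\textbf{Main obstacle.} The only real difficulty is making sure the checked conditions are exactly the hypotheses of Lemma~\ref{lem:quadface} and Corollary~\ref{cor:genus-criterion}, and not facts that were proved only for $h\ge131$. I would therefore have the verifier recompute \eqref{eq:Q1}, \eqref{eq:Q2} and the single-cycle property from the final $\rho$ for all seven primes. The structural data then serves only to generate $\rho$, and the proof is independent of the asymptotic lemmas.
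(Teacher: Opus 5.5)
Your proposal is correct and matches the paper's proof: for $h=11,19$ you check the listed full cycles directly against \eqref{eq:Q1}--\eqref{eq:Q2}, and for the other five primes you rebuild the family from the table data, check closure and both partitions of $S$, and confirm directly that $\rho$ is one $d$-cycle before invoking Lemmas~\ref{lem:quadface} and~\ref{lem:vertextransitivity}. Two harmless slips: for $h=11$ one has $d=(9\cdot 8)/2=36$, not $28$. Also, $h\ge131$ is used in more places than the two you list, e.g.\ $M\ge16$ in Lemma~\ref{lem:existenceofthereqexp} and $q\ge65$ in Proposition~\ref{prop:thecompletefamily}; this does not matter, because your verifier rechecks everything from the final $\rho$.
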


\begin{proof}
For $h=11,19$, direct verification of the listed base cycles checks that $\rho$ is a $d$-cycle and satisfies \eqref{eq:Q1} and \eqref{eq:Q2} on $S$.

For $h=43,59,67,83,107$, substitute the table into the construction. The displayed $A$, $j$, and $\upsilon$ satisfy the required
parameter conditions. The two masks specify the bijection
choices at the fibers in $L$ and the sign switches at the
fibers in the interval. Directly building the resulting permutation gives one $d$-cycle satisfying \eqref{eq:Q1} and \eqref{eq:Q2}. Lemmas~\ref{lem:quadface} and~\ref{lem:vertextransitivity} then give the claimed quadrangulation.

Computationally, the program
\href{https://github.com/SanderGi/Genus/blob/main/BipartiteKneser/verify_exceptional.py}{\texttt{verify\_exceptional.py}}
reads these certificates. It checks the two listed cycles directly and uses
\href{https://github.com/SanderGi/Genus/blob/main/BipartiteKneser/kneser_constructor.py}{\texttt{kneser\_constructor.py}}
to reconstruct the other five from their recorded parameters and masks.
For the reconstructed cases, every quadruple and both partitions of $S$ are checked.
An independent check then verifies that each output lists $S$ exactly once
and satisfies \eqref{eq:Q1} and \eqref{eq:Q2} at every element.
Thus each output defines a single $d$-cycle, and
Lemmas~\ref{lem:quadface} and~\ref{lem:vertextransitivity} give the claimed quadrangulation.
\end{proof}

We can now prove the main theorem.

\begin{proof}[Proof of Theorem~\ref{thm:maintheorem}]
Let $h>3$ be a prime with $h\equiv3\pmod8$. If $h\geq131$, Theorem~\ref{thm:generic-one-cycle} makes the permutation $\rho$ of the complete admissible family of Proposition~\ref{prop:thecompletefamily} a single cycle, and Corollary~\ref{cor:genus-criterion} gives a vertex-transitive orientable quadrangulation with $\gamma(H(h,2))=g_0$. Otherwise $h$ is one of the seven primes of Proposition~\ref{prop:exceptional-verification}, which gives a vertex-transitive orientable quadrangulation. It has genus
$g_0$ by Euler's formula, and the lower bound
$\gamma(H(h,2))\geq g_0$ of Section~\ref{sec:defs} gives
$\gamma(H(h,2))=g_0$. In both cases $g_0$ equals the displayed
formula of the theorem.
\end{proof}

\section{Further Directions}

PAGE has determined $\gamma(H(n,2))$ for every $7\leq n\leq 20$, and in every case the genus equals the rounded Euler bound. These computations are available in the main \href{https://github.com/SanderGi/Genus}{PAGE repository}. By Theorem~\ref{thm:maintheorem}, the same holds for every
prime $n>3$ with $n\equiv3\pmod8$. The analogous statement for $k=1$ holds for every $n\geq 3$, since the formula of Mohar, Parsons and Pisanski for $\gamma(H(n,1))$ is exactly the rounded Euler bound. This leads us to conjecture the following.

\begin{conjecture}\label{conj:eulerbound}
For every $n\geq 7$,
$$
\gamma(H(n,2))=\left\lceil 1+\frac14\binom n2\left(\binom{n-2}{2}-4\right)\right\rceil .
$$
\end{conjecture}

We note the exceptional value of $n=6$ where the lower bound equals $9$, while PAGE gives $10\leq\gamma(H(6,2))\leq 28$. When $n\equiv 0,1,2,3\pmod 8$ the bound is an integer, so the conjecture requires an orientable quadrangulation of $H(n,2)$. When $n\equiv 4,5,6,7\pmod 8$ it asks for an orientable embedding whose faces are all quadrilaterals except for two hexagons or one octagon.

A natural next case is prime powers $h\equiv3\pmod8$. The affine maps with nonzero square multipliers still act simply transitively on the two-element subsets \cite{Kantor1969}, so part of the present construction may extend to this setting.

There is also a separate symmetry question. If $n\equiv4,5,6,7\pmod8$, an embedding attaining the rounded Euler bound has two hexagons or one octagon in addition to quadrilateral faces. Embedding automorphisms preserve face lengths, so a vertex-transitive embedding would force every vertex to lie on a nonquadrilateral face. This is impossible once there are more than twelve vertices. Hence the PAGE-exact cases $n=7,12,13,14,15,20$ have no vertex-transitive minimum-genus embedding. For $n\geq8$ with $n\equiv0,1,2,3\pmod8$, when does $H(n,2)$ have a vertex-transitive quadrangulation? Godsil's classification \cite{Godsil1980} implies that a subgroup of $\Sym([n])$ acting simply transitively on the two-element subsets exists only when $n$ is a prime power congruent to $3$ modulo $4$, so most cases require a different symmetry.

For $k\geq 3$, much less is known. PAGE gives
$$
85\leq\gamma(H(8,3))\leq 143,
\qquad
337\leq\gamma(H(9,3))\leq 338.
$$
In both cases the lower bound is the Euler bound, so it is natural to ask whether the analogue of Conjecture~\ref{conj:eulerbound} holds for each fixed $k\geq 3$ once $n$ is large enough. For $n>2k$ and $k\geq 3$, Godsil's classification allows a subgroup of $\Sym([n])$ acting simply transitively on the $k$-subsets only for $(n,k)=(8,3)$ or $(32,3)$. New constructions are therefore needed in general.

The same questions apply to $K(n,k)$. For the primes in Theorem~\ref{thm:maintheorem}, $K(h,2)$ is the Cayley graph of $\Gamma$ with connection set $S$, so one can seek $\Gamma$-invariant minimum-genus embeddings. One can also ask for the nonorientable genus of $H(n,k)$ for $k\geq 2$. The case $k=1$ was settled by Singh and Sun \cite{SinghSun2023}.

Finally, Section~\ref{sec:invariant-rotations} applies to any connected Haar graph of an odd-order group with $S=S^{-1}$ and $1_\Gamma\notin S$. A complete admissible family whose permutation is one cycle gives a vertex-transitive orientable quadrangulation. It would be useful to characterize the Haar graphs for which such a family exists and to replace the finite one-cycle check in Section~\ref{sec:bounded-cycle-solution} with a structural argument.

\section*{Declaration of AI Use}
Generative AI tools, including ChatGPT and Claude, were used to brainstorm ideas, write code, and correct typos. All AI-generated code and mathematical suggestions were manually reviewed and verified by the authors.
\appendix

\section{Deferred proofs}
\label{app:deferred-proofs}

We give the proofs deferred from Sections~\ref{sec:completingthefamily}, \ref{sec:cycle-control}, and~\ref{sec:bounded-cycle-solution}, in the order in which their results are used.

\subsection{Proof of Lemma~\ref{lem:existenceofthereqexp}}
\label{app:required-exponents}
\begin{proof}
We first count the ordered pairs $(A,B)$ of nonzero squares in
$\F_h$ satisfying $A+B=-2$. Their number is
$$
\frac14\sum_{A\neq0,-2}(1+\chi(A))(1+\chi(-2-A)).
$$
Since $\chi(-2)=1$, the sums of $\chi(A)$ and $\chi(-2-A)$
over $A\neq0,-2$ are both $-1$. Also,
$$
\sum_{A\in\F_h}\chi(A(-2-A))
=-\sum_{y\in\F_h}\chi(y^2-1)=1.
$$
Indeed, the equation $z^2=y^2-1$ has
$h+\sum_y\chi(y^2-1)$ solutions, while its equivalent form
$(y-z)(y+z)=1$ gives exactly $h-1$ solutions. Thus the number
of ordered pairs is $((h-2)-1-1+1)/4=r$.
No pair has $A=B$, since this would give $A=B=-1$, which is
not a square. Hence there are $M=r/2$ unordered pairs
$\{A,B\}$. No two share an element because $B=-2-A$ is
determined by $A$.

Write $A=u^a$ and $B=u^b$. We discard the pairs containing any
of
$$
1,\quad u^{\pm K},\quad u^{\pm N},\quad u^{\pm E}.
$$
At most seven pairs are discarded. The remaining exponents
are nonzero and avoid the $K$-, $N$-, and $E$-exponent
pairs. The exponent pairs $\{\pm a\}$ and $\{\pm b\}$ are
also distinct. We already have $a\neq b$, and $a=-b$ would
give $AB=1$, so $A+B=-2$ would imply $(A+1)^2=0$, again
giving $A=-1$.

Order each remaining pair so that the least positive
residue of $a-b$ is even. Exactly one ordering does this
because the two possible residues sum to the odd integer
$q$. We discard any pair for which this even residue belongs to
$$
\{2,4,6,q-9,q-7,q-5,q-3,q-1\}.
$$
Each value determines $A/B$, and then $A+B=-2$ determines
$A$ and $B$ uniquely because $1+A/B\neq0$. Thus at most
eight further pairs are discarded. For any remaining pair,
the least positive residue of $a-b$ lies between $8$ and
$q-11$. Therefore the least residue $\upsilon$ of $1+a-b$
is odd and satisfies $9\leq\upsilon\leq q-10$.
Since $M\geq16$ and at most $7+8=15$ pairs were discarded,
at least one pair satisfies all these requirements.

Finally, Proposition~\ref{prop:excludedcoord} shows that an
exponent pair is absent at the fiber $1$ exactly when its
multipliers are $-1/3$ and $-3$. When $h\equiv1\pmod3$,
this is the $N$-exponent pair, which we excluded. When
$h\equiv2\pmod3$, we have $\chi(3)=1$ and $\chi(-1)=-1$,
so neither multiplier is a square. Thus both selected
exponent pairs are present at the fiber $1$.
\end{proof}

\subsection{Assignments under \texorpdfstring{$\rho$}{rho}}\label{app:rho-assignments}

For consecutive translation indices $j,j'$, the two quadruples of
Construction~\ref{con:translationquadruples} give the following values of $\rho$.
$$
\begin{array}{c|cc}
\text{element}&\text{first quadruple}&\text{second quadruple}\\ \hline
T(j+1)&T(j'+1)&T(r+j'+1)^{-1}\\
T(j+1)^{-1}&T(r+j'+1)&T(j'+1)^{-1}\\
T(r+j+1)&T(j'+1)^{-1}&T(r+j'+1)\\
T(r+j+1)^{-1}&T(r+j'+1)^{-1}&T(j'+1).
\end{array}
$$
For consecutive zero-coordinate indices $i,i'$, the corresponding values in
Construction~\ref{con:zerocoordinatequadruples} are
$$
\begin{array}{c|cc}
\text{element}&\text{first quadruple}&\text{second quadruple}\\ \hline
X_{\ell_i}(0)&X_{\ell_{i'}}(0)&X_{\ell_{i'}+1}(0)^{-1}\\
X_{\ell_i}(0)^{-1}&X_{\ell_{i'}+1}(0)&X_{\ell_{i'}}(0)^{-1}\\
X_{\ell_i+1}(0)&X_{\ell_{i'}}(0)^{-1}&X_{\ell_{i'}+1}(0)\\
X_{\ell_i+1}(0)^{-1}&X_{\ell_{i'}+1}(0)^{-1}&X_{\ell_{i'}}(0).
\end{array}
$$
Each quadruple maps the four elements at one index bijectively onto the four
at the next. Reading the input elements of either table from top to bottom,
the first quadruple sends the first and fourth elements to the first and
fourth at the next index, and the second and third to the third and second.
Thus an even number of first quadruples in succession returns each of the
four positions to itself.

\subsection{Proof of Lemma~\ref{lem:replacement-assignments}}\label{app:lemma63}

\begin{proof}
(a) Write $Q_1,Q_2,Q_3$ for the quadruples of
Lemma~\ref{lem:translationreplacements}, in the displayed order.
By Definition~\ref{def:rotationquadruple}, the assignments of $Q_1$,
$Q_2$, and $Q_3$ are, two rows each,
$$
\begin{aligned}
\rho(T(\ell))&=T(r+\ell-2),&
\rho(T(\ell)^{-1})&=T(\ell+2)^{-1},\\
\rho(T(r+\ell))&=T(\ell+2),&
\rho(T(r+\ell)^{-1})&=T(r+\ell-2)^{-1},\\[3pt]
\rho(T(2))&=X_{-E}(-w_0),&
\rho(T(2)^{-1})&=T(\ell-2),\\
\rho(X_E(-(q-1)))&=X_E(w_0),&
\rho(X_{-E}(q-1))&=T(\ell-2)^{-1},\\[3pt]
\rho(T(r+2))&=X_{-E}(w_0),&
\rho(T(r+2)^{-1})&=T(r+\ell+2)^{-1},\\
\rho(X_E(q-1))&=X_E(-w_0),&
\rho(X_{-E}(-(q-1)))&=T(r+\ell+2).
\end{aligned}
$$
These include the two direct assignments in part~(a).
In Construction~\ref{con:translationreplacements}, the surviving
translation quadruples occur in two sequences. The first consists
of the single first quadruple at $\ell+1$, whose successor is
$\ell-1$. The second runs from $\ell-3$, through the remaining
$r-4$ indices, to $1$. It consists of the second quadruple at
$\ell-3$, followed by $r-4$ first quadruples, so it uses $r-3$
applications of $\rho$. Since $r-4$ is even, the assignments of the
first sequence and the resulting assignments of the second are
$$
\begin{aligned}
\rho(T(\ell+2))&=T(\ell),&
\rho(T(\ell+2)^{-1})&=T(r+\ell),\\
\rho(T(r+\ell+2))&=T(\ell)^{-1},&
\rho(T(r+\ell+2)^{-1})&=T(r+\ell)^{-1},\\[3pt]
\rho^{r-3}(T(\ell-2))&=T(r+2)^{-1},&
\rho^{r-3}(T(\ell-2)^{-1})&=T(2)^{-1},\\
\rho^{r-3}(T(r+\ell-2))&=T(r+2),&
\rho^{r-3}(T(r+\ell-2)^{-1})&=T(2).
\end{aligned}
$$

Alternating these sequences with the assignments of $Q_1,Q_2,Q_3$
gives the following two sequences of iterated images. Here
$x\leadsto y$ means $y=\rho^n(x)$ for some $n\geq1$,
with intermediate elements possibly omitted.
$$
\begin{aligned}
X_{-E}(-(q-1))
&\leadsto T(r+\ell+2)\leadsto T(\ell)^{-1}\leadsto T(\ell+2)^{-1}
 \leadsto T(r+\ell)\leadsto T(\ell+2)\\
&\leadsto T(\ell)\leadsto T(r+\ell-2)\leadsto T(r+2)
 \leadsto X_{-E}(w_0),\\
X_{-E}(q-1)
&\leadsto T(\ell-2)^{-1}\leadsto T(2)^{-1}\leadsto T(\ell-2)
 \leadsto T(r+2)^{-1}\leadsto T(r+\ell+2)^{-1}\\
&\leadsto T(r+\ell)^{-1}\leadsto T(r+\ell-2)^{-1}\leadsto T(2)
 \leadsto X_{-E}(-w_0).
\end{aligned}
$$
For example, $Q_3$ sends $X_{-E}(-(q-1))$ to
$T(r+\ell+2)$, the first surviving sequence sends this to
$T(\ell)^{-1}$, and $Q_1$ sends that to
$T(\ell+2)^{-1}$. Continuing in this way gives the first
displayed sequence, ending with the assignment of $Q_3$ at $T(r+2)$.
Together, the two displayed sequences traverse the first surviving
sequence once from each of the four elements at $\ell+1$, and
the second once from each of the four elements at $\ell-3$.
At each surviving step the map between the four elements is a
bijection. Hence these traversals pass through every element at each
of their indices exactly once. The two surviving sequences together
contain every translation index, so every translation element occurs
exactly once between the stated starting and ending elements.

(b) At the index $0$, the two selected elements are $X_1(0)$ and
$X_2(0)$. The first quadruple of
Construction~\ref{con:zerocoordinatereplacements}(ii) gives the
first two rows below, and the second gives the last two.
$$
\begin{aligned}
\rho(X_{-a}(-1))&=X_{1}(0),&
\rho(X_a(1))&=X_b(-1),\\
\rho(X_{\upsilon}(0)^{-1})&=X_{1}(0)^{-1},&
\rho(X_{\upsilon}(0))&=X_{-b}(1),\\[3pt]
\rho(X_{\upsilon+1}(0)^{-1})&=X_{2}(0)^{-1},&
\rho(X_{\upsilon+1}(0))&=X_{-b}(-1),\\
\rho(X_{-a}(1))&=X_{2}(0),&
\rho(X_a(-1))&=X_b(1).
\end{aligned}
$$
These give the two direct assignments in part~(b). It remains to
follow the zero-coordinate elements from the index $0$ to the
index $j$.
Suppose first that $M$ is even. The two quadruples of
Construction~\ref{con:zerocoordinatereplacements}(iii) give
$$
\begin{aligned}
\rho(X_{1}(0))&=X_{-5}(0),&
\rho(X_{-1}(0))&=X_{-3}(0),&
\rho^2(X_{-1}(0))&=X_{3}(0),&
\rho^3(X_{-1}(0))&=X_{5}(0),\\
\rho(X_{2}(0))&=X_{-6}(0),&
\rho(X_{-2}(0))&=X_{-4}(0),&
\rho^2(X_{-2}(0))&=X_{4}(0),&
\rho^3(X_{-2}(0))&=X_{6}(0).
\end{aligned}
$$
Thus they take $X_{1}(0),X_{1}(0)^{-1},X_{2}(0),X_{2}(0)^{-1}$ to
$X_{5}(0)^{-1},X_{5}(0),X_{6}(0)^{-1},X_{6}(0)$, respectively, and together pass
through every element at the index $1$. The remaining $M-3$
quadruples before the index $j$ are first quadruples. Their number
is odd, so we have the assignments on the left below, and
consequently the complete succession from $0$ to $j$ gives those on
the right.
$$
\begin{aligned}
\rho^{M-3}(X_{5}(0)^{-1})&=X_{\upsilon+1}(0),&
\rho^{M-2}(X_{1}(0))&=X_{\upsilon+1}(0),\\
\rho^{M-3}(X_{5}(0))&=X_{\upsilon}(0),&
\rho^{M}(X_{1}(0)^{-1})&=X_{\upsilon}(0),\\
\rho^{M-3}(X_{6}(0)^{-1})&=X_{\upsilon+1}(0)^{-1},&
\rho^{M-2}(X_{2}(0))&=X_{\upsilon+1}(0)^{-1},\\
\rho^{M-3}(X_{6}(0))&=X_{\upsilon}(0)^{-1},&
\rho^{M}(X_{2}(0)^{-1})&=X_{\upsilon}(0)^{-1}.
\end{aligned}
$$

If $M$ is odd, the three quadruples of
Construction~\ref{con:zerocoordinatereplacements}(iii) instead give
$$
\begin{aligned}
\rho(X_{1}(0))&=X_{6}(0),&
\rho^2(X_{1}(0))&=X_{8}(0),&
\rho(X_{-1}(0))&=X_{3}(0),&
\rho^2(X_{-1}(0))&=X_{-5}(0),\\
\rho^3(X_{-1}(0))&=X_{-7}(0),&
\rho(X_{2}(0))&=X_{-3}(0),&
\rho^2(X_{2}(0))&=X_{-4}(0),&
\rho^3(X_{2}(0))&=X_{4}(0),\\
\rho^4(X_{2}(0))&=X_{5}(0),&
\rho^5(X_{2}(0))&=X_{-8}(0),&
\rho(X_{-2}(0))&=X_{-6}(0),&
\rho^2(X_{-2}(0))&=X_{7}(0).
\end{aligned}
$$
They take $X_{1}(0),X_{1}(0)^{-1},X_{2}(0),X_{2}(0)^{-1}$ to
$X_{8}(0),X_{7}(0)^{-1},X_{8}(0)^{-1},X_{7}(0)$, respectively, and together pass
through every element at the indices $1$ and $2$.
The remaining $M-4$ first quadruples are again an odd amount, so we
have the assignments on the left below, and the complete succession
from $0$ to $j$ therefore gives those on the right.
$$
\begin{aligned}
\rho^{M-4}(X_{8}(0))&=X_{\upsilon}(0)^{-1},&
\rho^{M-2}(X_{1}(0))&=X_{\upsilon}(0)^{-1},\\
\rho^{M-4}(X_{7}(0)^{-1})&=X_{\upsilon+1}(0),&
\rho^{M-1}(X_{1}(0)^{-1})&=X_{\upsilon+1}(0),\\
\rho^{M-4}(X_{8}(0)^{-1})&=X_{\upsilon+1}(0)^{-1},&
\rho^{M+1}(X_{2}(0))&=X_{\upsilon+1}(0)^{-1},\\
\rho^{M-4}(X_{7}(0))&=X_{\upsilon}(0),&
\rho^{M-2}(X_{2}(0)^{-1})&=X_{\upsilon}(0).
\end{aligned}
$$

Combining these calculations with the assignments in part~(ii),
and using $\leadsto$ as above, gives the following sequences, the
first two when $M$ is even and the last two when $M$ is odd.
$$
\begin{aligned}
X_{-a}(-1)&\leadsto X_{1}(0)\leadsto X_{\upsilon+1}(0)
 \leadsto X_{-b}(-1),\\
X_{-a}(1)&\leadsto X_{2}(0)\leadsto X_{-\upsilon-1}(0)
 \leadsto X_{-2}(0)\leadsto X_{-\upsilon}(0)\leadsto X_{-1}(0)
 \leadsto X_{\upsilon}(0)\leadsto X_{-b}(1),\\[3pt]
X_{-a}(-1)&\leadsto X_{1}(0)\leadsto X_{-\upsilon}(0)
 \leadsto X_{-1}(0)\leadsto X_{\upsilon+1}(0)\leadsto X_{-b}(-1),\\
X_{-a}(1)&\leadsto X_{2}(0)\leadsto X_{-\upsilon-1}(0)
 \leadsto X_{-2}(0)\leadsto X_{\upsilon}(0)\leadsto X_{-b}(1).
\end{aligned}
$$
For either parity, the two sequences together traverse the succession
from $0$ to $j$ once from each of
$X_{1}(0),X_{1}(0)^{-1},X_{2}(0),X_{2}(0)^{-1}$. The displayed assignments at the
modified indices exhaust the elements there. At every subsequent
index, the surviving quadruple maps the four elements bijectively
onto the four at the next index. Thus all four elements at every
remaining index are reached exactly once. It follows that every
zero-coordinate element occurs exactly once in the two sequences
between the stated starting and ending elements.
\end{proof}

\subsection{Proof of Lemma~\ref{lem:fiber-one}}\label{app:lemma64}

\begin{proof}
The canonical transition fixed in Construction~\ref{con:zerocoordinatereplacements}(i) sends
$$
\begin{aligned}
\rho(X_{k_0}(1))&=X_a(1),&
\rho(X_{-k_0}(1))&=X_{-a}(1),\\
\rho(X_{k_0}(-1))&=X_{-a}(-1),&
\rho(X_{-k_0}(-1))&=X_a(-1).
\end{aligned}
$$
By Lemma~\ref{lem:replacement-assignments}(b), successive applications of
$\rho$ take these images, through zero-coordinate elements where necessary, to
$$
X_b(-1),\qquad X_{-b}(1),\qquad
X_{-b}(-1),\qquad X_b(1),
$$
respectively, in the order of the four arguments
$X_{k_0}(1),X_{-k_0}(1),X_{k_0}(-1),X_{-k_0}(-1)$.

Let $p$ be the number of exponent pairs present at the fiber $1$.
From $\{\pm b\}$ to $\{\pm K\}$, there are $p-3$
transition quadruples of Lemma~\ref{lem:exponentpairtransition} or
their sign switches. By the table preceding
Lemma~\ref{lem:fiber-endpoint}, each preserves the coordinate
argument, and its effect on the exponent sign at $-1$ is opposite
to its effect at $1$. Hence the composite multiplies the exponent
sign at $-1$ by $(-1)^{p-3}$ times its multiplier at $1$.
As in the calculation preceding Lemma~\ref{lem:fiber-endpoint},
exactly one exponent pair is absent at the fiber $1$ when
$\chi(-3)=\chi(1-4)=+1$, and none is absent when $\chi(-3)=-1$.
Thus $p=r-1$ or $r$, respectively. Since $r$ is even,
$$
(-1)^{p-3}=\chi(-3).
$$

One of the two attainable composites gives
$\rho^{p-3}(X_b(1))=X_K(1)$. For this choice,
$$
\begin{aligned}
\rho^{p-3}(X_{-b}(1))&=X_{-K}(1),\\
\rho^{p-3}(X_{-b}(-1))&=X_{-\chi(-3)K}(-1),\\
\rho^{p-3}(X_b(-1))&=X_{\chi(-3)K}(-1).
\end{aligned}
$$
Together with the first transition and the assignments of Lemma~\ref{lem:replacement-assignments}(b), these are exactly
the first bijection in the statement. Switching any one of the
$p-3\geq r-4>0$ remaining transition quadruples reverses the
exponent sign in all four final images and leaves their coordinate
arguments unchanged. Such a switch belongs only to this fiber and
preserves the domain and range of its quadruple. It therefore gives
the second bijection independently of all choices at other fibers,
with the intermediate order fixed.
\end{proof}

\subsection{Proof of Lemma~\ref{lem:five-positive}}
\label{app:cycle-control-proofs}
\begin{proof}
To determine the possible transfer pairs, we identify the
three positions in each ordered triple with $0,1,-1$ in
$\F_3$. Then $(23)$ acts as $t\mapsto-t$,
and $(123)$ acts as $t\mapsto t+1$. Thus the local inverse
bijection with sign $e\in\{\pm1\}$ is
$$
t\mapsto et+\frac{1+e}{2}.
$$
The six maps $t\mapsto\epsilon t+A$, with
$\epsilon\in\{\pm1\}$ and $A\in\F_3$, give all permutations
of the three positions, and their signs are $\epsilon$.
All calculations of these maps below are in $\F_3$. Put $n=b-a+1$. For $1\leq j\leq n$, let $e_j$ be the
sign of $(\varphi_{a+j-1}^+)^{-1}$, and let $c_j$ count
the positive character values among the first $j$ fibers
of the interval. Set $c_0=0$ and
$$
\sigma_0=1,\qquad \sigma_j=e_1\cdots e_j.
$$
By the table preceding Proposition~\ref{prop:interval-coset},
the two local inverse bijections have the same sign when
the character value is negative and opposite signs when
it is positive. Hence the product of the first $j$ signs
in the negative triples is
$$
\tau_j=(-1)^{c_j}\sigma_j.
$$
Either value of $e_j$ is available at each fiber,
independently of the other fibers, by
Lemma~\ref{lem:fiber-endpoint}. In the expressions for $p_+$ and $p_-$, the local inverse
factors occur in increasing coordinate order. Their
products therefore have the forms
$$
p_+(t)=\sigma_nt+A,\qquad p_-(t)=\tau_nt+B,
$$
where composition gives
$$
\begin{aligned}
A&=\sum_{j=1}^n\frac{\sigma_{j-1}+\sigma_j}{2}
  =\frac{1+\sigma_n}{2}+\sum_{j=1}^{n-1}\sigma_j,\\
B&=\sum_{j=1}^n\frac{\tau_{j-1}+\tau_j}{2}
  =\frac{1+\tau_n}{2}
   +\sum_{j=1}^{n-1}(-1)^{c_j}\sigma_j.
\end{aligned}
$$

Fix either value of $\sigma_n$. The intermediate signs
$\sigma_1,\ldots,\sigma_{n-1}$ can still be chosen
independently, since they determine the local choices by
$e_j=\sigma_{j-1}\sigma_j$. A sign $\sigma_j$ with $c_j$
even contributes $2\sigma_j$ to $A+B$ and zero to $A-B$.
When $c_j$ is odd, the roles are reversed. There are at least five positive character values, so the
indices $j$ corresponding to the first four are all less
than $n$. At the indices corresponding to the first and
third positive values, $c_j$ is odd, giving two independent
signs in $A-B$. At those corresponding to the second and
fourth, $c_j$ is even, giving two independent signs in $A+B$.
The sum of two independently chosen signs attains every
value of $\F_3$. Fixing all other intermediate signs,
we can therefore choose $A+B$ and $A-B$ independently.
Since $2$ is invertible in $\F_3$, this gives every pair
$(A,B)$. Thus, for either value of $\sigma_n$, all nine choices of
$A$ and $B$ are possible, while
$$
\tau_n=(-1)^{N_+([a,b])}\sigma_n.
$$
These are exactly the eighteen transfer pairs satisfying
the required parity identity.
\end{proof}

\subsection{Proof of Proposition~\ref{prop:canonical-transfer}}
\label{app:canonical-verification}
\begin{proof}
We first check that the assignment formulas use the same
labels for every prime in the fixed case. For
$w\in L\setminus\{q-1\}$, Construction~\ref{con:transitions}
gives $z_w=w+1$ and $y_w=-(w+2)$ except at its listed
coordinates. At those coordinates, substituting into
$y_w=w-2z_w$ and using $2q=-1$ in $\F_h$ gives the
following values when $h\equiv1\pmod3$.
$$
\setlength{\arraycolsep}{5pt}
\begin{array}{c|ccccc}
w&1&3&q-9&q-4&q-2\\ \hline
z_w&q-1&-(q-2)&-3&2&-5\\
y_w&4&-2&q-3&q-8&-(q-7)
\end{array}
$$
When $h\equiv2\pmod3$, the values are
$$
\setlength{\arraycolsep}{5pt}
\begin{array}{c|ccccccc}
w&1&4&q-10&q-5&q-4&q-3&q-2\\ \hline
z_w&-2&1&q-4&-1&-3&-6&-(q-3)\\
y_w&5&2&q-1&q-3&-(q-1)&-(q-8)&q-9.
\end{array}
$$
By
Proposition~\ref{prop:interval-suppression}, the images
from fibers in $L$ are initial elements at fibers in $L$
or one of the six entry elements. Their labels therefore
belong to $\Omega$. For the actual character signs of a prime and any permitted
choices, these formulas and
Corollary~\ref{cor:bounded-cycle-reduction}
show that sending each element of $U$ to its label
identifies the assignments of $\rho_U$ with those of
$\Pi$. Thus $\Pi$ is a permutation for these data, with
the same number of cycles as $\rho_U$. Now change any of the signs or bijection choices.
At each fiber in $L$, either assigned sign and either
bijection give a bijection from the four initial elements
to the four elements of the last exponent pair. Changing
these choices therefore only permutes the four images
obtained after applying the fixed assignments. Their set
does not change. Likewise, changing $p_+$ or $p_-$ only
permutes the three exit labels that are the images of
the corresponding entry triple. For the actual data, these sets of images are disjoint
and together equal $\Omega$. They remain so after any
of the changes above. Hence $\Pi$ is a permutation for
every choice of signs and bijections, including sign
assignments that do not arise from a prime.
\end{proof}

\bibliographystyle{amsplain}
\bibliography{bibliography}

\end{document}